\newtheorem{remark}{Remark}
\newtheorem{theorem}{Theorem}
\newtheorem{lemma}[theorem]{Lemma}
\newtheorem{example}[theorem]{Example}
\newtheorem{corollary}[theorem]{Corollary}
\newtheorem{proposition}[theorem]{Proposition}
\newtheorem{observation}[theorem]{Observation}
\newtheorem{claim}{Claim}
\newtheorem{conjecture}[theorem]{Conjecture}
\newcommand{\Z}{\mathbb Z}
\newcommand{\R}{\mathbb R}
\newcommand{\w}{{\mathbf w}}
\newcommand{\ba}{{\mathbf a}}
\newcommand{\bb}{{\mathbf b}}
\newcommand{\bc}{{\mathbf c}}
\def\a{\mathbf a}
\def\b{\mathbf b}
\def\c{\mathbf c}
\newcommand{\beeq}{\begin{eqnarray*}}
	\newcommand{\eneq}{\end{eqnarray*}}
\newcommand{\be}{\begin{equation}}
\newcommand{\ee}{\end{equation}}
\theoremstyle{remark}
\definecolor{darkred}{cmyk}{.3,.9,.80,.2}
\title{Extremal families  for the  Kruskal--Katona theorem}
\author{Oriol Serra\thanks{Department of Mathematics, Universitat Polit\`ecnica de Catalunya, Barcelona. \texttt{oriol.serra@upc.edu}. Supported by the Spanish Research Agency under project MTM2017-82166-P}
\and Llu\'{i}s Vena
\thanks{Department of Mathematics, Universitat Polit\`ecnica de Catalunya, Barcelona. \texttt{lluis.vena@gmail.com}. Supported by the a Beatriu de Pin\'os Fellowship BP2018-0030 of the AGAUR, Horizon's 2020 program cofund.}}
\date{\today}
\begin{document}

	\maketitle
		\begin{abstract}
Given a family  $S$ of $k$--subsets of  $[n]$, its lower shadow $\Delta(S)$ is the family of    $(k-1)$--subsets  which are contained in at least one set in $S$. The celebrated Kruskal--Katona theorem gives the minimum cardinality of $\Delta(S)$ in terms of the cardinality of $S$. F\"uredi and Griggs (and M\"ors) showed that the extremal families for this shadow minimization problem in the Boolean lattice are unique for some cardinalities and asked for a general characterization of these extremal families.

In this paper we prove a new combinatorial inequality from which yet another simple proof of the Kruskal--Katona theorem can be derived. The inequality can be used to obtain a characterization of the extremal families for this minimization problem, giving an answer to  the question of F\"uredi and Griggs. Some known and new additional properties of extremal families can also be easily derived from the inequality. 

	\end{abstract}

\tableofcontents

\section{Introduction}

The well--known Kruskal--Katona  Theorem \cite{kat09,krus63} on the minimum shadow of a family of $k$--subsets of   $[n]=\{1,2,\ldots ,n\}$ is a central result in Extremal Combinatorics with multiple applications, see e.g. \cite{Frankl1991}.
The \emph{shadow}  of a  family 
$S\subset \binom{[n]}{k}$  is the family
 $\Delta (S)\subset \binom{[n]}{k-1}$  of $(k-1)$--subsets  which are contained in some set in $S$. The Shadow Minimization Problem asks for the minimum cardinality of $\Delta (S)$ for families of $k$-sets $S$ with a given cardinality $m=|S|$. The answer given by the Kruskal--Katona theorem  can be stated in terms of  $k$--binomial decompositions. The \emph{ $k$--binomial decomposition} 
of a positive integer $m$ is 
\begin{equation}
\label{eq:k-bin-seq}
m=\binom{a_0}{k}+\binom{a_{1}}{k-1}+\cdots +\binom{a_{t}}{k-t},
\end{equation}
where the coefficients satisfy  $a_0>a_1>\cdots >a_{t}\ge k-t\ge 1$, $t\in [0,k-1]$, and are   uniquely determined by $m$ and $k$. We also refer to the  decreasing sequence $\a=(a_0>a_1>\cdots >a_{t})$ from above as the  $k$--binomial decomposition
 of $m$. More generally, for an integer sequence $\a=(a_0,a_1,\ldots ,a_t)$ we denote by  
 \[
\binom{\ba}{k}=\binom{a_0}{k}+\binom{a_{1}}{k-1}+\cdots + \binom{a_t}{k-t},
\]
and, for positive integers $i,j$ we also denote by 
\[
\binom{\ba-i}{k-j}=\binom{a_0-i}{k-j}+\binom{a_{1}-i}{k-j-1}+\cdots + \binom{a_t-i}{k-j-t}.
\]
We denote the length of the sequence $\a$ by $\ell (\a)=t+1$. 
If $\a$ is strictly decreasing, $\ell (\a)\le k$ and $a_t\geq k-t$ then  $m= \binom{\a}{k}$ is the 
\emph{$k$--binomial decomposition} of a strictly positive integer $m$;
in this case we use the shorthand $m\stackrel{b}{=}\binom{\a}{k}$. If 
$\a$ is  the empty sequence we set $\binom{\a}{k}=0$ by convention and write
 $0\stackrel{b}{=}\binom{\a}{k}$. 
 We use the usual conventions for binomial coefficients that 
 $\binom{n}{k}=0$ for all $k<0$ and $\binom{0}{0}=1$. 

For $i\le k-1$ the $i$--shadow $\Delta^i(S)$ of $S$ is defined recursively as $\Delta^i(S)=\Delta(\Delta^{i-1}(S))$ and $\Delta^0(S)=S$.  With this notation, the Kruskal--Katona theorem is expressed as follows.

\begin{theorem}[Kruskal--Katona \cite{zbMATH03489128,krus63}]\label{thm:kk}
	Let $n\geq k\geq 1$.
	Let $S\subset \binom{[n]}{k}$ be a non-empty family of $k$--subsets of $[n]$  and let  $\binom{\a}{k}$ be the $k$--binomial decomposition of $m=|S|$. Then, for each $1\le i\le k-1$,  
	\begin{equation}
	\label{eq:kk}
	|\Delta^i S|\ge \binom{\a}{k-i}.
	\end{equation}

\end{theorem}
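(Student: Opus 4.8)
The plan is to prove the Kruskal--Katona theorem by the classical \emph{compression} (shifting) method, reducing the general case to the case of a \emph{compressed} family and then analyzing compressed families directly. First I would recall the colex order on $\binom{[n]}{k}$: order $k$-sets so that $A<B$ if $\max(A\triangle B)\in B$, and let $\mathcal{C}(m,k)$ denote the family of the first $m$ sets of $\binom{[n]}{k}$ in this order. The standard fact (which I would either cite or reprove) is that $\mathcal{C}(m,k)$ consists precisely of the sets indexed by the $k$-binomial decomposition $m\stackrel{b}{=}\binom{\a}{k}$, and that $|\Delta\,\mathcal{C}(m,k)|=\binom{\a}{k-1}$; iterating gives $|\Delta^i\,\mathcal{C}(m,k)|=\binom{\a}{k-i}$, so the colex family attains the claimed bound and it remains to show no family does better. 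For the lower bound I would introduce the $(i,j)$-compressions $C_{ij}$ for $i<j$, where $C_{ij}(A)=(A\setminus\{j\})\cup\{i\}$ if $j\in A$, $i\notin A$, and the image is not already in the family, and $C_{ij}(A)=A$ otherwise; then $C_{ij}(S)=\{C_{ij}(A):A\in S\}$ has $|C_{ij}(S)|=|S|$ and, crucially, $|\Delta\,C_{ij}(S)|\le|\Delta S|$.

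The key steps, in order, would be: (1) prove the monotonicity lemma $|\Delta\,C_{ij}(S)|\le|\Delta S|$ by a local injection argument — show $\Delta\,C_{ij}(S)\subseteq C_{ij}(\Delta S)\cup(\text{something controllable})$, or more cleanly that $C_{ij}$ applied to $\Delta S$ covers $\Delta\,C_{ij}(S)$ in the appropriate sense; (2) argue that repeatedly applying all compressions $C_{ij}$ with $i<j$ terminates (a potential-function / monovariant argument: the sum $\sum_{A\in S}\sum_{x\in A}x$ strictly decreases under a nontrivial compression) in a \emph{left-compressed} family $S^*$ with $|S^*|=|S|$ and $|\Delta S^*|\le|\Delta S|$; (3) analyze left-compressed families: write $S^* = S_0 \cup S_1$ where $S_0=\{A\in S^*:1\notin A\}$ and $S_1=\{A\in S^*:1\in A\}$, set $S_1'=\{A\setminus\{1\}:A\in S_1\}\subseteq\binom{[n]}{k-1}$, and observe $\Delta S^*\supseteq S_1' \cup (\{1\}\vee \Delta S_0)$ where the two parts are disjoint on the coordinate $1$, giving $|\Delta S^*|\ge |S_1'| + |\Delta S_0|$; (4) set up the induction on $n$ (and on $k$, or on $m$) — apply the inductive hypothesis to $S_0\subseteq\binom{[n-1+1]\setminus\{1\}}{k}$... actually to $S_0$ as a family on $[2,n]$ and to $S_1'$ on $[2,n]$ — and verify that the binomial arithmetic works out, i.e. that whatever $|S_0|$ and $|S_1'|$ are (subject to $|S_0|+|S_1'|=m$ and the left-compression constraint that forces $\Delta S_0 \subseteq S_1'$), the sum of the two resulting lower bounds is at least $\binom{\a}{k-1}$ for the colex decomposition of $m$; (5) finally iterate from $i=1$ to $i=k-1$, noting $\Delta^i S = \Delta^{i-1}(\Delta S)$ and $|\Delta S|\ge \binom{\a}{k-1}$ with $\binom{\a}{k-1}$ having colex decomposition whose sequence is still (a truncation/shift of) $\a$, so the estimate propagates.

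The main obstacle I expect is step (4): the binomial bookkeeping showing that the function $m \mapsto \binom{\a}{k-1}$ (the colex shadow size) is itself "the right lower bound'' under the split $m = |S_0| + |S_1'|$ with the constraint $\Delta S_0 \subseteq S_1'$. Concretely one must show that for all admissible decompositions of $m$ into $m_0 + m_1$ with $\partial_{k}(m_0) \le m_1$ (where $\partial_k$ denotes the colex shadow-size function), one has $m_1 + \partial_{k-1}(m_0) \ge \partial_k(m)$, and that equality characterizes colex — this is where the convexity/subadditivity properties of binomial decompositions enter, and it is the combinatorial heart of every proof of Kruskal--Katona. An alternative route that sidesteps some of this is to prove instead the $1$-shadow bound $|\Delta S|\ge\binom{\a}{k-1}$ directly via the new combinatorial inequality the abstract advertises, and then obtain \eqref{eq:kk} for general $i$ by a clean induction on $i$ using the fact that the colex decomposition of $\binom{\a}{k-1}$ is compatible with $\a$; but since that inequality has not yet been stated in the excerpt, I would fall back on the compression proof sketched above, and flag step (4) as the place to be careful about the edge cases $a_t = k-t$ and $t = k-1$, where the decomposition has maximal length and the recursion on $k$ bottoms out.
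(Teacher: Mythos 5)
Your route (compression) is a legitimate and genuinely different strategy from the paper's, which avoids compressions altogether: the paper splits $S$ along a minimum-degree element, records the identity $\binom{\mathbf a}{k}=\binom{\mathbf b}{k}+\binom{\mathbf c}{k-1}$ for the two pieces, verifies the hypothesis $\mathbf b\ge_{\mathrm{lex}}\mathbf a-1$ via Lemma~\ref{lem:a-1<b}, and then invokes the purely numerical Lemma~\ref{lem:abc}. However, as written your proposal has a genuine gap precisely where the theorem lives. Your step (4) --- that for every admissible split $m=m_0+m_1$ compatible with the compression constraint the two inductive bounds sum to at least $\binom{\mathbf a}{k-1}$ --- is exactly the numerical inequality that any proof of Kruskal--Katona must establish; you name it as ``the main obstacle'' and ``the combinatorial heart'' but give no argument for it. In this paper that content is supplied by Lemma~\ref{lem:abc} (proved through Lemma~\ref{lem:abck} and Corollary~\ref{cor.1}), so without either a proof of your step (4) or a citation of an equivalent statement, what you have is a plan for a proof, not a proof.

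There are also two concrete errors in the sketch that would need repair even granting step (4). In step (3), for $B\in\Delta S_0$ the set $\{1\}\cup B$ has $k$ elements, so $\{1\}\vee\Delta S_0$ cannot sit inside $\Delta S^*$, which consists of $(k-1)$-sets; the correct decomposition for a left-compressed family is $\Delta S^*\supseteq S_1'\cup\bigl(\{1\}\vee\Delta S_1'\bigr)$, giving $|\Delta S^*|\ge |S_1'|+|\Delta S_1'|$, while compression enters separately through $\Delta S_0\subseteq S_1'$, which is what forces $|S_1'|$ to be large (equivalently $|S_0|$ to be small) and lets the induction on $n$ close; your claimed bound $|\Delta S^*|\ge |S_1'|+|\Delta S_0|$ is not justified by the inclusion you state, and the quantities entering step (4) must be changed accordingly. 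In step (5), when $\ell(\mathbf a)=k$ the expression $\binom{\mathbf a}{k-1}$ is no longer literally a $(k-1)$-binomial decomposition, so the iteration from $i$ to $i+1$ needs an explicit monotonicity argument for the shadow-size function (or an argument in the spirit of the equality analysis in the paper's Theorem~\ref{thm:kk+fg}); you flag this edge case but do not resolve it.
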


The lower bound in Theorem \ref{thm:kk} is tight. We say that a family $S$ is \emph{extremal} 
if the cardinality of its lower shadow $\Delta (S)$ achieves the lower bound in Theorem \ref{thm:kk} for $i=1$.
It turns out that, for an extremal family, equality holds in \eqref{eq:kk} as well for all $i\ge 1$ (see \cite[Theorem~2.1]{furgri86}). In other words, the shadow of an extremal family is also  extremal.

We recall that the colex order 
on the $k$--subsets of $[n]$ is defined by $X\le_{\text{colex}} Y$ if and only if $\max((X\setminus Y)\cup (Y\setminus X))\in Y$. We denote by  $I_{n,k}(m)$   the initial segment  of length $m$ in the colex order in $\binom{[n]}{k}$.  In what follows, we refer to initial segments of the colex order up to automorphisms of the Boolean lattice, induced by any permutation of $[n]$. It can be easily checked that, for every $m$, the initial segment $I_{n,k}(m)$ of length $m$ in the colex order is an extremal family.

F\"uredi and Griggs \cite{furgri86} (see also M\"ors \cite{mors85}) 
proved that, for cardinalities $m$ for which the $k$--binomial decomposition has length $\ell (\a)<k$, 
these initial segments $I_{n,k}(m)$ are in fact the unique extremal families.  They also exhibit  nontrivial examples which show that this may not be the case when $\ell(\a)=k$. 
We note that, for fixed $k$, the set of integers with $k$--binomial decomposition of length $k$ has upper asymptotic density one
\footnote{In the interval $[\binom{n}{k}, \binom{n+1}{k}]$ there are $\binom{n-1}{k-1}$ numbers whose 
$k$--binomial decomposition has length $k$, one for each choice of an increasing sequence of length $k-1$  of numbers in  $[n-1]$, and 
$\lim_{n\to\infty} \binom{n}{k-1}/\binom{n-1}{k-1}=1$.}.
Thus, the unicity of the extremal families can only be ensured  on a thin set of cardinalities.  
This prompted the authors of \cite{furgri86} to ask about a general characterization of  the  extremal families.   The aim of this paper is to address this question, an answer to which can be found  in Theorem~\ref{thm:charac} below. 

The proof of Theorem~\ref{thm:charac} involves three ingredients. The main one  is the following new numerical inequality of binomial sequences.

\begin{lemma}\label{lem:abc} Let $m$ and $k$ be positive integers. Let    $\binom{\a}{k}\stackrel{b}{=} m$, and let $\b$ and $\c$ be
 strictly decreasing sequences of non-negative integers ($\b$ not empty, $\c$ possibly empty).
Assume that 
\begin{equation}\label{eq:abc0}
\binom{\a}{k}=\binom{\b}{k}+\binom{\c}{k-1},
\end{equation}
If
\begin{equation}\label{eq:cond0}
\b\geq_{lex} \a-1,
\end{equation}
then, for each $i\ge 0$,
\begin{align}
\binom{\a}{k-i}\le \binom{\b}{k-i}+\binom{\c}{k-i-1}.
\label{eq:abc-i}
\end{align}
Moreover, if there is equality in \eqref{eq:abc-i} for $i=1$, 
and 
$c_j\geq k-1-j-1$ and $b_j\geq k-j-1$ for all $j$ for which the terms exists, 
then the equality  \eqref{eq:abc-i} holds for all $i\ge 1$.
\end{lemma}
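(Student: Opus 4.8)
The plan is to deduce \eqref{eq:abc-i} from the Kruskal--Katona theorem applied to a concrete family built from colex initial segments, and to control its shadow by two purely numerical facts. Fix a large integer $N$ and an element, say $N$, of $[N]$. Let $\B\subseteq\binom{[N-1]}{k}$ be the initial segment of the colex order of size $\binom{\b}{k}$, let $\C\subseteq\binom{[N-1]}{k-1}$ be the initial segment of size $\binom{\c}{k-1}$, and put $T=\B\cup\{C\cup\{N\}:C\in\C\}\subseteq\binom{[N]}{k}$. By \eqref{eq:abc0}, $|T|=\binom{\b}{k}+\binom{\c}{k-1}=m$. Splitting $\Delta^{i}T$ according to whether a $(k-i)$-set contains $N$, one gets for $1\le i\le k-1$
\[
|\Delta^{i}T|=\bigl|\Delta^{i}\B\cup\Delta^{i-1}\C\bigr|+\bigl|\Delta^{i}\C\bigr|=\max\bigl(|\Delta^{i}\B|,\,|\Delta^{i-1}\C|\bigr)+|\Delta^{i}\C|,
\]
the last equality because $\Delta^{i}\B$ and $\Delta^{i-1}\C$ are both colex initial segments of $\binom{[N-1]}{k-i}$, hence nested. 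Writing $\b^{\dagger}$ and $\c^{\dagger}$ for the canonical $k$-, respectively $(k-1)$-binomial decompositions of $\binom{\b}{k}$, respectively $\binom{\c}{k-1}$, the Kruskal--Katona theorem applied to these (extremal) initial segments gives $|\Delta^{i}\B|=\binom{\b^{\dagger}}{k-i}$ and $|\Delta^{j}\C|=\binom{\c^{\dagger}}{k-1-j}$.

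The two numerical facts are: $(\star)$ for every strictly decreasing sequence $\d$ of non-negative integers, with $\d^{\dagger}$ the canonical $K$-binomial decomposition of $\binom{\d}{K}$, one has $\binom{\d^{\dagger}}{K-i}\le\binom{\d}{K-i}$ for all $i\ge 0$; and $(\star\star)$ if $\mathbf{q}$ is a valid $K$-binomial decomposition and $\binom{\mathbf{p}}{K}\ge\binom{\mathbf{q}}{K}$, then $\binom{\mathbf{p}}{K-i}\ge\binom{\mathbf{q}}{K-i}$ for all $i\ge0$. Fact $(\star)$ comes from inspecting the canonicalisation procedure: each elementary step either discards a suffix of terms that already vanish at level $K$ --- which can only decrease every lower-level value --- or collapses a trailing block by a repeated application of Pascal's rule (e.g.\ $\binom{r}{r}+\binom{r-1}{r-1}+\dots+\binom{0}{0}=\binom{r+1}{1}$), which leaves the value at every level unchanged. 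Fact $(\star\star)$ follows from $(\star)$ together with the classical monotonicity of the cascade: canonicalise $\mathbf{p}$ to $\mathbf{p}^{\dagger}$; since $\mathbf{p}^{\dagger}$ and $\mathbf{q}$ are valid, $\binom{\mathbf{p}^{\dagger}}{K-i}$ and $\binom{\mathbf{q}}{K-i}$ are the sizes of the $i$-th shadows of colex initial segments of sizes $\binom{\mathbf{p}}{K}\ge\binom{\mathbf{q}}{K}$, hence nested, so $\binom{\mathbf{p}^{\dagger}}{K-i}\ge\binom{\mathbf{q}}{K-i}$, while $(\star)$ gives $\binom{\mathbf{p}}{K-i}\ge\binom{\mathbf{p}^{\dagger}}{K-i}$.

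Now I assemble \eqref{eq:abc-i}. From \eqref{eq:cond0}, $\b+\mathbf{1}\geq_{lex}\a$ (adding $1$ to every coordinate of both sides), and since $\a$ is a valid $k$-binomial decomposition a routine induction on $\ell(\a)$ --- peeling off leading terms, using $\binom{\a}{k}<\binom{a_{0}+1}{k}$ at the base --- gives $\binom{\b+\mathbf{1}}{k}\ge\binom{\a}{k}$. As $\binom{\b+\mathbf{1}}{k}=\binom{\b}{k}+\binom{\b}{k-1}$ by coordinatewise Pascal, and $\binom{\a}{k}=\binom{\b}{k}+\binom{\c}{k-1}$, we obtain $\binom{\b}{k-1}\ge\binom{\c}{k-1}=\binom{\c^{\dagger}}{k-1}$; then $(\star\star)$ with $\mathbf{p}=\b$, $\mathbf{q}=\c^{\dagger}$, $K=k-1$ yields $\binom{\b}{j}\ge\binom{\c^{\dagger}}{j}$ for all $j\le k-1$. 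Substituting this, together with $\binom{\b^{\dagger}}{k-i}\le\binom{\b}{k-i}$ and $\binom{\c^{\dagger}}{k-i-1}\le\binom{\c}{k-i-1}$ from $(\star)$, into the shadow formula gives, for $1\le i\le k-1$,
\[
|\Delta^{i}T|=\max\bigl(\binom{\b^{\dagger}}{k-i},\,\binom{\c^{\dagger}}{k-i}\bigr)+\binom{\c^{\dagger}}{k-i-1}\le\binom{\b}{k-i}+\binom{\c}{k-i-1}.
\]
Since $|T|=m$ has $k$-binomial decomposition $\binom{\a}{k}$, Theorem~\ref{thm:kk} gives $|\Delta^{i}T|\ge\binom{\a}{k-i}$; combining the two displays proves \eqref{eq:abc-i} for $1\le i\le k-1$, and the cases $i=0$ (by \eqref{eq:abc0}) and $i\ge k$ are trivial.

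For the ``moreover'' part, assume \eqref{eq:abc-i} holds with equality at $i=1$. Then the chain above forces $|\Delta T|=\binom{\b}{k-1}+\binom{\c}{k-2}=\binom{\a}{k-1}$, so $T$ is an extremal family; by \cite[Theorem~2.1]{furgri86} --- equality at level $1$ propagates to all levels --- we get $|\Delta^{i}T|=\binom{\a}{k-i}$ for every $i\ge1$. It then remains to see that the hypotheses $c_{j}\ge k-1-j-1$ and $b_{j}\ge k-j-1$ (for all admissible $j$) make every inequality in the assembling chain an equality at every level, so that $|\Delta^{i}T|=\binom{\b}{k-i}+\binom{\c}{k-i-1}$ for all $i\ge1$, whence $\binom{\a}{k-i}=\binom{\b}{k-i}+\binom{\c}{k-i-1}$. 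This last point is where I expect the real work to lie: one must show that, once equality holds at $i=1$, these near-properness conditions force $\binom{\c^{\dagger}}{k-i-1}=\binom{\c}{k-i-1}$ and $\max\bigl(\binom{\b^{\dagger}}{k-i},\binom{\c^{\dagger}}{k-i}\bigr)=\binom{\b}{k-i}$ for all relevant $i$ --- that is, that canonicalisation loses nothing at the levels that matter --- which calls for careful bookkeeping of exactly which terms of the binomial expansions are nonzero.
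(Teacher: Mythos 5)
Your proof of the inequality \eqref{eq:abc-i} is, as far as I can check, correct, but it takes the opposite logical route from the paper: you deduce the numerical inequality \emph{from} the Kruskal--Katona theorem (lower bound $|\Delta^i T|\ge\binom{\a}{k-i}$ for your family $T$, exact shadow sizes and nestedness of colex initial segments for the upper bound), whereas the paper proves Lemma~\ref{lem:abc} by a self-contained induction on binomial sums (Lemma~\ref{lem:abck}, Corollary~\ref{cor.1}, the wall/rubble/pavement reduction) precisely so that Theorem~\ref{thm:kk+fg} can then be \emph{derived} from it. Inside this paper your argument would therefore be circular: it invokes Theorem~\ref{thm:kk} and the propagation result \cite[Theorem~2.1]{furgri86}, both of which are proved here from Lemma~\ref{lem:abc}; as standalone mathematics it is fine, since those results have independent proofs. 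It is worth noting that the paper explicitly says the case $k'=k,\,k''=k-1$ was not found to follow from Kruskal--Katona; your use of the lex hypothesis --- $\b\ge_{lex}\a-1$ gives $\binom{\b}{k-1}\ge\binom{\c}{k-1}$ and hence, by your $(\star\star)$, domination $\binom{\b}{k-i}\ge\binom{\c^{\dagger}}{k-i}$ at every level, which is exactly what controls the overlap term $\max(|\Delta^{i}\B|,|\Delta^{i-1}\C|)$ --- appears to achieve such a derivation, and your facts $(\star)$, $(\star\star)$ do hold (one cosmetic slip: in the collapse example the merged coefficient should be $\binom{r+1}{r}$, not $\binom{r+1}{1}$; only then is the "unchanged at every level" claim literally true).

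The genuine gap is the ``moreover'' clause: you reduce it to the claim that, once there is equality at $i=1$, the hypotheses $c_j\ge k-j-2$ and $b_j\ge k-j-1$ force $\binom{\c^{\dagger}}{k-i-1}=\binom{\c}{k-i-1}$ and $\max\bigl(\binom{\b^{\dagger}}{k-i},\binom{\c^{\dagger}}{k-i}\bigr)=\binom{\b}{k-i}$ for all $i\ge1$, and then you stop, saying this is ``where the real work lies.'' That bookkeeping is not optional: it is exactly where the extra hypotheses enter (without them the conclusion is false, e.g.\ $k=3$, $\a=(3,2,1)$, $\b=(3,0)$, $\c=(2,1)$), so as written the second half of the lemma is unproved. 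It can be completed along your lines: equality at $i=1$ forces $\binom{\c^{\dagger}}{k-2}=\binom{\c}{k-2}$, and since any term with $c_j=k-2-j$ ($j\le k-2$) contributes $1$ at level $k-2$, no such term exists; hence every effective term of $\c$ satisfies $c_j\ge k-1-j$ and, by the equality case of your $(\star)$, $\binom{\c}{k-1-i}=\binom{\c^{\dagger}}{k-1-i}$ for all $i\ge0$. For the max term, equality at $i=1$ gives either $\binom{\b^{\dagger}}{k-1}=\binom{\b}{k-1}$, which rules out terms with $b_j=k-1-j$ and makes $\b$ proper at level $k$, so $\binom{\b^{\dagger}}{k-i}=\binom{\b}{k-i}$ for all $i$; or $\binom{\c^{\dagger}}{k-1}=\binom{\b}{k-1}$, in which case $\c^{\dagger}$ is the canonical $(k-1)$-decomposition of $\binom{\b}{k-1}$ and, because $b_j\ge k-1-j$ makes $\b$ proper at level $k-1$, the equality case of $(\star)$ gives $\binom{\b}{k-i}=\binom{\c^{\dagger}}{k-i}$ for all $i\ge1$. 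Either way $|\Delta^i T|=\binom{\b}{k-i}+\binom{\c}{k-i-1}$, which combined with extremality of $T$ finishes the claim; until this is written out, the proposal proves only the first assertion of the lemma.
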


We note that the condition  \eqref{eq:cond0} in  Lemma \ref{lem:abc} can not be ommitted. For instance,  we have $\binom{2n-1}{n}=\binom{2n-1}{n-1}$ for every $n$ (so $\b$ is the empty sequence in \eqref{eq:abc0}) while $\binom{2n-1}{n-1}>\binom{2n-1}{n-2}$. There are sequences $\a$ and integers $k$ for which the condition is in fact necessary for the conclusion of the Lemma to hold. The non-negativity condition on $\b$ and $\c$ is also necessary (see the case $k=3$, $\a=(4,2)$, $\b=(3,2,-10)$, $\c=(-10,-42)$); the non-emptyness condition on $\b$ is necessary so that \eqref{eq:cond0} imposes a non-trivial condition (see $k=1$, $\a=(1)$, $\b=\emptyset$, $\c=(0)$); the lower bounds on $c_j$ and $b_j$ in the second part is also necessary (see the case $k=3$, $\a=(3,2,1)$, $\b=(3,0)$, $\c=(2,1)$). Moraly, all three $\a$, $\b$, and $\c$ ought to be binomial decompositions, yet the condition imposed by \eqref{eq:cond0} enlarges the context of the statement beyond the binomial decompositions for the sequences $\b$ and $\c$.

Based on the numerical inequality \eqref{eq:abc-i} with $i=1$ one can easily derive the Kruskal--Katona theorem. Unlike some existing proofs, the one derived from Lemma \ref{lem:abc} does not involve compression or any transformation of the families of $k$--subsets, and it is therefore suitable to analyze the structure of the extremal ones. Our proof is closer in spirit to the one given by Kruskal in \cite{krus63} where there is some implicit use of  \eqref{eq:abc-i}  (see \cite[Lemma~7, Section 6]{krus63}), which is however in itself not sufficient to provide the characterization of extremal families given in this paper. Furthermore,  inequality \eqref{eq:abc-i} provides also a simple proof of  a result by F\"uredi and Griggs \cite[Theorem~2.1]{furedi_families_1986} which shows that the lower shadow of an extremal family is itself extremal.  These simple proofs of both results are given in Section~\ref{sec:kk}. A common feature of these proofs is that they rely on numerical inequalities that hold independently of their potential connection to families of $k$--subsets of a ground set. 


Lemma \ref{lem:abc}  provides the following recursive characterization of extremal families, which is the main result of this paper and gives an answer to the question posed by F\"uredi and Griggs in \cite{furedi_families_1986}. We use the following notation. For a family $S\subset \binom{[n]}{k}$ of $k$--subsets of $[n]$ and $x\in [n]$ we denote by
$$
S(x)=\{X\in S: x\in X\}
$$
the family of sets in $S$ containing $x$, and by 
$$
d_S(x)=|S(x)|
$$ 
the degree of $x$ in $S$, which is always assumed to be nonzero, that is, we  assume (without loss of generality) that $\cup_{X\in S} X=[n]$. We also denote by 
$$
S(x)\setminus x=\{X\setminus x: X\in S(x)\}.
$$

\begin{theorem}\label{thm:charac} Let $n\ge k\ge 2$ and let  $S\subset \binom{[n]}{k}$ be a non-empty family of $k$--subsets of $[n]$, where $[n]$ is the support of $S$. Let
$\binom{{\mathbf a}}{k}\stackrel{b}{=}|S|$
be the $k$--binomial decomposition of $m=|S|$. For each $x\in [n]$, let
$$
\binom{{\mathbf c}(x)}{k-1}\stackrel{b}{=}|S(x)|=|S(x)\setminus x|\qquad \text{and}\qquad \binom{{\mathbf b}(x)}{k}\stackrel{b}{=}|S|-|S(x)|.
$$
(the binomial decomposition of the $0$ being the empty sequence.)

The family $S$ is {extremal} 
	if and only if,
for every $x\in [n]$, the inequality
\begin{equation}\label{eq:b>a-1}
|S|-|S(x)|\ge \binom{{\mathbf a}-1}{k}
\end{equation}
holds, and either
\begin{enumerate}[label=(\roman*)]
\item 
\label{en:t1} 
  the inequality \eqref{eq:b>a-1} is strict and 
\begin{enumerate}[label=(\roman{enumi}.\arabic*)]
\item 
\label{en:t11} 
 (inclusion) $S(x)\setminus x \subseteq \Delta (S\setminus S(x))$,
\item 
\label{en:t12} 
(extremality) both $S\setminus S(x)$ and $S(x)\setminus x$ are extremal, and
\item 
\label{en:t13} 
(numerical equality) $\binom{{\mathbf a}}{k-1}=\binom{{\mathbf b}(x)}{ k-1}+\binom{{\mathbf c}(x)}{k-2}$;
\end{enumerate}
\end{enumerate}
or
\begin{enumerate}[resume,label=(\roman*)]
\item 
\label{en:t2} %
there is equality in \eqref{eq:b>a-1} 
and 
\begin{enumerate}[label=(\roman{enumi}.\arabic*)]
\item 
\label{en:t21}  %
 (inclusion) $\Delta (S\setminus S(x))\subseteq S(x)\setminus x$, and
\item 
\label{en:t22} (extremality)  $S(x)\setminus x$ is extremal.
\end{enumerate}
\end{enumerate}
Additionally, if \ref{en:t1} or \ref{en:t2} holds for a $x\in [n]$, then $S$ is extremal.
\end{theorem}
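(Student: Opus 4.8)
The plan is to exploit the elementary decomposition of the shadow along an element $x\in[n]$: writing $S=S(x)\sqcup(S\setminus S(x))$, one has $\Delta(S)(x)=S(x)\setminus x$ and $\Delta(S)\setminus\Delta(S)(x)=\Delta\bigl(S(x)\setminus x\bigr)\cup\bigl(\Delta(S\setminus S(x))\setminus(S(x)\setminus x)\bigr)$, since a $(k-1)$-set $Y\not\ni x$ lies in $\Delta(S)$ iff $Y\cup x\in S(x)$ or $Y$ lies below some $X\in S$ with $x\notin X$. Hence
\[
|\Delta(S)|=|S(x)\setminus x|+\bigl|\Delta(S(x)\setminus x)\cup\bigl(\Delta(S\setminus S(x))\bigr)\bigr|,
\]
where in the last union $\Delta(S(x)\setminus x)$ and $\Delta(S\setminus S(x))$ are both subfamilies of $\binom{[n]\setminus x}{k-2}$ and $\binom{[n]\setminus x}{k-1}$ respectively — wait, more carefully, $\Delta(S(x)\setminus x)\subseteq\binom{[n]\setminus x}{k-2}$ consists of the sets $Z$ with $Z\cup x\in\Delta(S)$, so it is disjoint from $\Delta(S\setminus S(x))$, and one gets the clean identity $|\Delta(S)|=|S(x)\setminus x|+|\Delta(S(x)\setminus x)|+|\Delta(S\setminus S(x))\setminus(S(x)\setminus x)|$. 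I would first establish this counting identity precisely, as it is the combinatorial backbone of the whole argument.

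Next I would combine this identity with the Kruskal--Katona bounds (Theorem~\ref{thm:kk}) applied to $S(x)\setminus x$ and to $S\setminus S(x)$, and with Lemma~\ref{lem:abc} applied to the sequences $\a$, $\b(x)$, $\c(x)$. Concretely: $|S(x)\setminus x|=\binom{\c(x)}{k-1}$, $|\Delta(S(x)\setminus x)|\ge\binom{\c(x)}{k-2}$, $|S\setminus S(x)|=\binom{\b(x)}{k}$, and $|\Delta(S\setminus S(x))|\ge\binom{\b(x)}{k-1}$; moreover $|\Delta(S\setminus S(x))\setminus(S(x)\setminus x)|\ge|\Delta(S\setminus S(x))|-|S(x)\setminus x|\ge\binom{\b(x)}{k-1}-\binom{\c(x)}{k-1}$ when the latter is nonnegative (this is the regime of case~\ref{en:t2}), and otherwise one simply drops that term. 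Assembling, in case~\ref{en:t1} one gets $|\Delta(S)|\ge\binom{\c(x)}{k-1}+\binom{\c(x)}{k-2}+\binom{\b(x)}{k-1}$, and Lemma~\ref{lem:abc} with $i=1$ — whose hypothesis \eqref{eq:cond0} is exactly \eqref{eq:b>a-1}, since $\b(x)\ge_{lex}\a-1$ is equivalent to $\binom{\b(x)}{k}\ge\binom{\a-1}{k}$ for binomial decompositions — gives $\binom{\c(x)}{k-2}+\binom{\b(x)}{k-1}\ge\binom{\a}{k-1}$, whence $|\Delta(S)|\ge\binom{\a}{k-1}$; in case~\ref{en:t2} a parallel computation using $|\Delta(S\setminus S(x))\setminus(S(x)\setminus x)|\ge\binom{\b(x)}{k-1}-\binom{\c(x)}{k-1}$ and again Lemma~\ref{lem:abc} yields the same bound. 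This proves $|\Delta(S)|\ge\binom{\a}{k-1}$, i.e. the Kruskal--Katona inequality holds; but $S$ was assumed to satisfy the hypotheses of \ref{en:t1} or \ref{en:t2}, and I would then trace through \emph{when the inequalities above are equalities}. Under \ref{en:t1}, conditions \ref{en:t11}--\ref{en:t13} force each of the used inequalities to be tight: \ref{en:t12} gives $|\Delta(S(x)\setminus x)|=\binom{\c(x)}{k-2}$ and $|\Delta(S\setminus S(x))|=\binom{\b(x)}{k-1}$, \ref{en:t11} gives $|\Delta(S\setminus S(x))\setminus(S(x)\setminus x)|=|\Delta(S\setminus S(x))|-|S(x)\setminus x|$ since $S(x)\setminus x\subseteq\Delta(S\setminus S(x))$, and \ref{en:t13} is precisely equality in \eqref{eq:abc-i} for $i=1$ — hence $|\Delta(S)|=\binom{\a}{k-1}$ exactly and $S$ is extremal. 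Under \ref{en:t2}, the inclusion \ref{en:t21} makes $|\Delta(S\setminus S(x))\setminus(S(x)\setminus x)|=|\Delta(S\setminus S(x))|-|S(x)\setminus x|$, extremality \ref{en:t22} gives $|\Delta(S(x)\setminus x)|=\binom{\c(x)}{k-2}$, equality in \eqref{eq:b>a-1} makes $\b(x)=\a-1$ so that $\binom{\b(x)}{k-1}+\binom{\c(x)}{k-2}=\binom{\a-1}{k-1}+\binom{\c(x)}{k-2}$, and a direct check (or Lemma~\ref{lem:abc} in the equality case) shows this equals $\binom{\a}{k-1}$; hence again $|\Delta(S)|=\binom{\a}{k-1}$. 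This settles the ``additionally'' clause and the sufficiency direction of the characterization.

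For the necessity direction I would argue contrapositively: if $S$ is extremal, then for \emph{every} $x$ all the inequalities chained above must be equalities, and I would read off, for each $x$, which of the two cases \ref{en:t1}, \ref{en:t2} it falls into according to whether \eqref{eq:b>a-1} is strict or an equality. The subtle point is that \eqref{eq:b>a-1} itself must hold for every $x$: this is where the second part of Lemma~\ref{lem:abc} (and the fact, from \cite{furgri86}, that the shadow of an extremal family is extremal) is used — if \eqref{eq:cond0} failed for some $x$, the decomposition identity together with the Kruskal--Katona bounds would force $|\Delta(S)|>\binom{\a}{k-1}$, contradicting extremality; I would phrase this as a separate lemma. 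Then, in the strict case, tightness of $|\Delta(S)|$ forces simultaneously $|\Delta(S\setminus S(x))|=\binom{\b(x)}{k-1}$ and $|\Delta(S(x)\setminus x)|=\binom{\c(x)}{k-2}$ (extremality of both, giving \ref{en:t12}), the union in the identity to be ``disjoint-free'', i.e. $S(x)\setminus x\subseteq\Delta(S\setminus S(x))$ (giving \ref{en:t11}), and equality in \eqref{eq:abc-i} for $i=1$ (giving \ref{en:t13}); in the equality case one similarly extracts \ref{en:t21} and \ref{en:t22}. The main obstacle I anticipate is precisely this bookkeeping of when the chained inequalities are simultaneously tight, in particular disentangling the set-theoretic overlap between $\Delta(S\setminus S(x))$ and $S(x)\setminus x$ from the numerical slack in Lemma~\ref{lem:abc}; getting the two cases' hypotheses to match the two regimes ($\binom{\b(x)}{k-1}\gtrless\binom{\c(x)}{k-1}$, equivalently strict vs. equality in \eqref{eq:b>a-1}) exactly, with no gap and no overlap, is the delicate part and will require care with the conventions on empty sequences and the degenerate cases $S(x)\setminus x=\emptyset$ or $S\setminus S(x)=\emptyset$.
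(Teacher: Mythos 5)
Your proposal is correct and takes essentially the same approach as the paper: fix an element $x$, decompose $\Delta(S)$ into the sets containing $x$ (counted by $\Delta(S(x)\setminus x)$) and those not containing $x$ (counted by $(S(x)\setminus x)\cup\Delta(S\setminus S(x))$), bound the pieces by Theorem~\ref{thm:kk+fg}, feed the numerical identity $\binom{\a}{k}=\binom{\b(x)}{k}+\binom{\c(x)}{k-1}$ into Lemma~\ref{lem:abc} via $\b(x)\ge_{lex}\a-1$ (i.e.\ \eqref{eq:b>a-1}), and extract \ref{en:t1}/\ref{en:t2} from tightness of the resulting chain in both directions. The only cosmetic difference is that you defer the necessity of \eqref{eq:b>a-1} to a separate lemma and attribute it to the second part of Lemma~\ref{lem:abc} and the F\"uredi--Griggs result, whereas the paper obtains it directly (Frankl's argument) from $|\Delta(S)|\ge|S(x)\setminus x|+|\Delta(S(x)\setminus x)|$ together with Kruskal--Katona applied to $S(x)\setminus x$ and monotonicity of the bound --- which is exactly the mechanism your sketch already contains.
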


With the characterization of Theorem \ref{thm:charac} one can show the unicity of the colex order for cardinalities $m$  whose $k$--binomial sequence $\a$ has length $\ell (\a)<k$, a result proved in \cite[Theorem~2.6]{furgri86} and \cite[Theorem~7]{mors85}. We also characterize those cardinalities where the initial segment of the  colex order is the unique extremal family.

\begin{theorem}\label{thm:ucolex} Let $n\ge k> 0$ and $0<m\le \binom{n}{k}$. Let $\binom{\a}{k}\stackrel{b}{=} m$ be the $k$--binomial decomposition of $m$. 
		
The initial segment of length $m$ in the colex order in $\binom{[n]}{ k}$  is the unique  extremal set (up to automorphism) 
if and only if either
\begin{enumerate}[label=(\roman*)]
\item 
\label{en:ucol_1} $\ell (\a)<k$, or
\item 
\label{en:ucol_2} $m=\binom{n'}{k}-1=\binom{n'-1}{k}+\binom{n'-2}{ k-1}+\cdots +\binom{n'-k}{1}$ for some $n'$ with $k<n'\le n$.
\end{enumerate}
\end{theorem}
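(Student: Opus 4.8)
The plan is to use Theorem~\ref{thm:charac} as a black box, applied to the ground set element $x=n$ (or, after an automorphism, to whichever element achieves the relevant degree), and to argue by induction on $k$ and on $m$. Recall that for the initial segment $I=I_{n,k}(m)$ in the colex order with $m \stackrel{b}{=} \binom{\a}{k}$ and $\a=(a_0>a_1>\cdots>a_t)$, the element $a_0$ is the top of the support, the families $I\setminus I(a_0)$ and $I(a_0)\setminus a_0$ are themselves initial segments of the colex order, with $|I\setminus I(a_0)|=\binom{a_0-1}{k}$ and $|I(a_0)\setminus a_0| \stackrel{b}{=} \binom{(a_1,\ldots,a_t)}{k-1}$, and the splitting at $a_0$ realizes exactly case \ref{en:t2} of Theorem~\ref{thm:charac} (equality in \eqref{eq:b>a-1} with $\b(a_0)=\a-1$ restricted appropriately and $\c(a_0)=(a_1,\ldots,a_t)$). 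So the structure of the colex initial segment is fully transparent; the content is the \emph{uniqueness}, i.e. that no other extremal family exists exactly in the two stated ranges of $m$.

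For the ``if'' direction I would treat the two cases separately. In case \ref{en:ucol_1}, $\ell(\a)<k$: suppose $S$ is extremal and pick $x$ witnessing \ref{en:t1} or \ref{en:t2} from Theorem~\ref{thm:charac}. In both sub-cases one gets $|S\setminus S(x)|\ge \binom{\a-1}{k}$, hence $|S(x)|\le m-\binom{\a-1}{k}=\binom{(a_1,\ldots,a_t)}{k-1}$ (using the standard identity $\binom{a_0}{k}-\binom{a_0-1}{k}=\binom{a_0-1}{k-1}$ together with the decomposition); the point is that $\binom{(a_1,\ldots,a_t)}{k-1}$ is a $(k-1)$-binomial decomposition of length $t<k-1$ \emph{strictly less than} $k-1$ whenever $\ell(\a)<k$ is a genuine inequality with room, so by the inductive hypothesis $S(x)\setminus x$ is forced to be a colex initial segment; a symmetric / complementary argument on $S\setminus S(x)$ (whose size $\binom{\a-1}{k}$ again has length $<k$) forces it to be colex too, and then the inclusion condition \ref{en:t11} or \ref{en:t21} pins down how the two colex pieces glue, yielding that $S$ itself is colex up to automorphism. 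The base cases $k=1$ and $k=2$ are immediate. In case \ref{en:ucol_2}, $m=\binom{n'}{k}-1$: here $\a=(n'-1,n'-2,\ldots,n'-k)$ has full length $k$, but it is the \emph{unique} such decomposition that is ``maximal'' in the sense that $\b(x)=\a-1$ is forced to be non-empty of full length and the degree of every element is squeezed; concretely, $|S\setminus S(x)|\ge\binom{\a-1}{k}=\binom{n'-2}{k}+\cdots+\binom{n'-k-1}{1}$, which combined with $|S|=\binom{n'}{k}-1$ leaves $|S(x)|\le\binom{n'-2}{k-1}+\cdots+\binom{n'-k}{1}=\binom{n'-1}{k-1}-1$, i.e. every vertex has degree at most $\binom{n'-1}{k-1}-1$; since $S\subseteq\binom{[n']}{k}$ essentially (the support cannot exceed $n'$ by the bound on $m$) and $|\binom{[n']}{k}\setminus S|=1$, $S$ is $\binom{[n']}{k}$ minus a single $k$-set, which is a colex initial segment up to an automorphism of $[n']$.

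For the ``only if'' direction I would exhibit, for every $m$ falling outside both \ref{en:ucol_1} and \ref{en:ucol_2}, a second extremal family not isomorphic to $I_{n,k}(m)$. So assume $\ell(\a)=k$ and $m\neq\binom{n'}{k}-1$ for all admissible $n'$, i.e. writing $\a=(a_0>a_1>\cdots>a_{k-1})$ with $a_{k-1}\ge 1$, the sequence is \emph{not} the run $(a_0,a_0-1,\ldots,a_0-k+1)$. Then there is some index $j$ with $a_j>a_{j+1}+1$ (a ``gap''); I would use the first such gap to build an alternative gluing: split off a slightly different sub-family on the bottom — concretely take $S'\setminus S'(x)$ to be the colex initial segment of size $\binom{\a-1}{k}$ but attach a \emph{different} extremal family of size $|S(x)|=\binom{(a_1,\ldots,a_{k-1})}{k-1}$ as the link of $x$, exploiting that this link-size itself has a $(k-1)$-binomial decomposition of full length $k-1$ (since $\ell(\a)=k$) and hence, by F\"uredi--Griggs's original non-uniqueness examples or by a fresh inductive non-uniqueness on $k-1$, admits a non-colex extremal representative; one then checks the inclusion condition \ref{en:t21} still holds because the lower shadow $\Delta(S'\setminus S'(x))$ of the (fixed colex) bottom part is itself a colex initial segment and is contained in \emph{any} extremal family of the right size sitting above it. Care is needed at the very bottom of the induction ($k=2$, $k=3$) where one checks directly that the only non-colex extremal families are exactly the ones at $m=\binom{n'}{k}-1$ complements — but there uniqueness actually \emph{holds}, consistent with \ref{en:ucol_2}, so the non-uniqueness construction only kicks in for $k\ge 3$ with a genuine interior gap, and one must verify the construction produces a bona fide non-isomorphic family (not merely a relabeling of colex).

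The main obstacle I expect is precisely this last point: producing, in the ``only if'' direction, a \emph{certifiably non-isomorphic} second extremal family for \emph{every} $m$ with $\ell(\a)=k$ that is not of the form $\binom{n'}{k}-1$, uniformly across all the shapes of gaps in $\a$. Getting the inclusion/extremality bookkeeping of Theorem~\ref{thm:charac} to close for the modified family is routine once the right sub-families are chosen, but choosing them so that the resulting $S'$ is provably not a permutation of $I_{n,k}(m)$ — especially distinguishing ``relabeling'' from ``genuinely new'' — is the delicate combinatorial core, and will likely require tracking an invariant (such as the sorted degree sequence, or the isomorphism type of the link of the maximum-degree vertex) that colex initial segments pin down but the alternative construction violates.
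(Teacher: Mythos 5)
Your plan for the ``if'' direction has a gap at exactly the point where the paper invests its main effort. In case \ref{en:ucol_1} your induction needs the \emph{actual} cardinalities $|S(x)|$ and $|S\setminus S(x)|$ to have incomplete binomial decompositions, but all Theorem~\ref{thm:charac} gives you is the bound \eqref{eq:b>a-1}; note also that this bound reads $|S(x)|\le\binom{\a-1}{k-1}$, not $|S(x)|\le\binom{(a_1,\ldots,a_t)}{k-1}$ as you wrote, and in any case an upper bound on a cardinality says nothing about the length of its decomposition (for $k=3$, $\a=(6,3)$, a degree such as $7=\binom{4}{2}+\binom{1}{1}$ already has full length at level $2$). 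The paper closes this gap with the rigidity statement \eqref{eq:numerical_colex_uniq} of Corollary~\ref{cor.1}: when $\ell(\a)<k$, the equalities forced by extremality admit only the two splittings listed there, both of which are again incomplete, so the induction of Lemma~\ref{lem:l(a)<k} applies to both pieces. Moreover, that two colex pieces glue into a colex segment is not automatic from the inclusion condition: the paper needs a separate claim producing an ordering of the support that is simultaneously degree-monotone for $S(x)\setminus x$ and for $\Delta(S\setminus S(x))$, argued case by case through \eqref{eq:numerical_colex_uniq}. Finally, in case \ref{en:ucol_2} the bound on $m$ does not by itself confine the support to $n'$ elements; one needs the second half of Theorem~\ref{thm:kk+fg} (extremality propagates to iterated shadows) to get $|\Delta^{k-1}(S)|=n'$ before concluding that $S$ is $\binom{[n']}{k}$ minus a single set.

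The ``only if'' direction is where the proposal genuinely fails. It rests on the principle that a full-length $(k-1)$-binomial decomposition guarantees a non-colex extremal family of that size, which is false: it contradicts case \ref{en:ucol_2} of the very theorem one level down. Whenever $(a_1,\ldots,a_{k-1})$ is a consecutive run while $a_0$ sits strictly above it (e.g.\ $k=3$, $\a=(4,2,1)$, $m=6$), the link size equals $\binom{a_1+1}{k-1}-1$, whose extremal family is unique, so ``colex bottom plus a non-colex link at one vertex'' produces nothing new; yet non-uniqueness must still be exhibited, and indeed for $\a=(4,2,1)$ the family $\binom{[3]}{2}\vee\{\{4\},\{5\}\}$ is extremal, has degree sequence $(4,4,4,3,3)$ different from that of $I_{n,3}(6)$, and is not of your form since $|S\setminus S(x)|\in\{2,3\}$ for every $x$, never $\binom{4}{3}$. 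There is also a bookkeeping inconsistency in the construction: if the bottom is the colex segment of size $\binom{\a-1}{k}$ you are in the equality case of \eqref{eq:b>a-1}, where \ref{en:t21} together with the Kruskal--Katona bound forces $S(x)\setminus x=\Delta(S\setminus S(x))$, so the link is determined and colex; to have any freedom you must use the strict case with bottom $\binom{[a_0]}{k}$, and then the sizes you quote do not add up to $m$. The paper proves the converse without any recursion or link surgery: for every $\a$ of length $k$ with $m\ne\binom{n'}{k}-1$ it perturbs $I_{n,k}(m)$ itself by exchanging one element in its last set (located via the smallest index $r>0$ whose term lies on the diagonal of $\binom{a_{k-1}}{1}$), keeping the shadow unchanged, and it handles $m=\binom{n'}{k}-1$ by the support argument above.
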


The paper is organized as follows. The proof of Lemma  \ref{lem:abc} is postponed to Section \ref{sec:ineq}, where we state more general versions of the result as Lemma~\ref{lem:abck} and Corollary~\ref{cor.1}, and discuss some technical details related to it.  Section \ref{sec:kk} derives the Kruskal--Katona theorem and \cite[Theorem~2.1]{furedi_families_1986} from Lemma \ref{lem:abc}. The simple proof of these known results illustrates the use of Lemma \ref{lem:abc} and it is also used in the proof of  Theorem \ref{thm:charac} on the characterization of extremal families, which is the contents of   Section \ref{sec:charac}. In subsection \ref{subsec:examples} we include examples of nonextremal sets showing that the conditions in Theorem \ref{thm:charac} are independent of each other, so none of them can be ommitted in the characterization. Section \ref{sec:ucolex} contains the proof of Theorem \ref{thm:ucolex} on the unicity of the initial segments of the colex order as extremal sets. 
The paper concludes with  final remarks in Section \ref{sec:concl}. 


\section{The theorems of Kruskal-Katona and F\"uredi-Griggs}\label{sec:kk}

We next show how Lemma \ref{lem:abc} implies both Kruskal-Katona's theorem \cite{zbMATH03489128,krisud06}  as well as \cite[Theorem~2.1]{furedi_families_1986}, which we combine as Theorem~\ref{thm:kk+fg} below. 

\begin{theorem}[\cite{kat09,krus63,furgri86}]\label{thm:kk+fg}
	Let $n,k$ be positive integers with $n\geq k$ and $S\subset \binom{[n]}{k}$ be  a family of $k$--subsets of $[n]$ with cardinality $|S|=m>0$. If $\binom{\a}{k}\stackrel{b}{=} m$ then 
	\[|\Delta(S)|\geq \binom{\ba}{k-1}\]
	Furthermore, if  $|\Delta(S)|=\binom{\ba}{k-1}$, then $|\Delta^i(S)|=\binom{\ba}{k-i}$
		for each $i\geq 0$.
\end{theorem}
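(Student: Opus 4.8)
The plan is to induct on $k$ and on $n$ (or equivalently on $m$), using the standard ``splitting by an element'' decomposition of $S$ and feeding the two resulting cardinalities into Lemma~\ref{lem:abc}. Fix $x\in[n]$ and write $S=S(x)\cup(S\setminus S(x))$, so that $\Delta(S)\supseteq \bigl(\Delta(S\setminus S(x))\bigr)\ \cup\ \bigl((S(x)\setminus x)\cup \Delta(S(x)\setminus x)\bigr)$, where the sets on the right are taken inside $\binom{[n]}{k-1}$ with the copies of $S(x)\setminus x$ sitting among the $(k-1)$-sets not containing $x$ and $\Delta(S(x)\setminus x)$ among those containing $x$. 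The crude but correct bound is
\[
|\Delta(S)|\ \ge\ \max\Bigl(|\Delta(S\setminus S(x))|,\ |S(x)\setminus x|\Bigr)\ +\ |\Delta(S(x)\setminus x)|.
\]
Actually the cleaner inequality to use is $|\Delta(S)|\ge |S(x)\setminus x| + |\Delta(S(x)\setminus x)|$ together with $|\Delta(S)|\ge |\Delta(S\setminus S(x))|+|\Delta(S(x)\setminus x)|$; one of these, chosen according to which of $|S\setminus S(x)|$ or $\binom{\a-1}{k}$ is larger, will be the right one to combine with induction. Write $m'=|S\setminus S(x)|$ and $m''=|S(x)|$ with $k$- and $(k-1)$-binomial decompositions $\binom{\b}{k}\stackrel{b}{=}m'$ and $\binom{\c}{k-1}\stackrel{b}{=}m''$, so that $m=\binom{\b}{k}+\binom{\c}{k-1}$.

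The key point is to choose $x$ so that the hypothesis \eqref{eq:cond0} of Lemma~\ref{lem:abc} holds, i.e.\ so that $\b\ge_{lex}\a-1$, equivalently $m'=m-m''\ge \binom{\a-1}{k}$, equivalently $d_S(x)=m''\le m-\binom{\a-1}{k}=\binom{\a}{k}-\binom{\a-1}{k}=\binom{\a}{k-1}^{\!\ast}$ — more precisely $\binom{\a}{k}-\binom{\a-1}{k}$ is exactly $\binom{a_0-1}{k-1}+\cdots$, a quantity at least the average degree $km/n$, so an element $x$ of at most average degree works. (Here I would spell out: since $\sum_x d_S(x)=km$ and $|[n]|=n$, some $x$ has $d_S(x)\le km/n$; and a short binomial computation shows $km/n\le m-\binom{\a-1}{k}$ whenever $n\ge a_0$, which holds since $S\subseteq\binom{[n]}{k}$ forces $a_0\le n$.) Fixing such an $x$, induction on $k$ gives $|\Delta(S(x)\setminus x)|\ge \binom{\c}{k-2}$, and induction on $m$ (the family $S\setminus S(x)$ is strictly smaller) gives $|\Delta(S\setminus S(x))|\ge\binom{\b}{k-1}$. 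Plugging into the appropriate crude bound above yields $|\Delta(S)|\ge \binom{\b}{k-1}+\binom{\c}{k-2}$ in one regime and $|\Delta(S)|\ge |S(x)\setminus x|+\binom{\c}{k-2}=\binom{\c}{k-1}+\binom{\c}{k-2}\ge \binom{\b}{k-1}+\binom{\c}{k-2}$ in the other (using $\binom{\c}{k-1}\ge \binom{\b}{k-1}$ when $m''$ is the larger part, which is exactly the equality-in-\eqref{eq:b>a-1} regime). In all cases $|\Delta(S)|\ge \binom{\b}{k-1}+\binom{\c}{k-2}\ge \binom{\a}{k-1}$ by Lemma~\ref{lem:abc} applied with index $i=1$. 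This proves the first assertion.

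For the ``furthermore'' part, suppose $|\Delta(S)|=\binom{\a}{k-1}$. Then every inequality in the chain above is an equality; in particular $|\Delta(S\setminus S(x))|=\binom{\b}{k-1}$ and $|\Delta(S(x)\setminus x)|=\binom{\c}{k-2}$, so by the induction hypothesis applied to the ``furthermore'' clause, $|\Delta^i(S\setminus S(x))|=\binom{\b}{k-i}$ and $|\Delta^i(S(x)\setminus x)|=\binom{\c}{k-1-i}$ for all $i\ge 0$; moreover equality holds in \eqref{eq:abc-i} at $i=1$. One then checks the side conditions $b_j\ge k-j-1$ and $c_j\ge k-1-j-1$ hold automatically here — they follow from $m',m''$ being genuine binomial decompositions of cardinalities of families of subsets of a common $[n]$, which caps $b_0,c_0\le n$ and forces the tails not to dip below these thresholds (this is the one slightly fiddly bookkeeping step). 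Lemma~\ref{lem:abc} then upgrades equality at $i=1$ to equality in \eqref{eq:abc-i} for all $i\ge1$: $\binom{\a}{k-i}=\binom{\b}{k-i}+\binom{\c}{k-1-i}$. Combining with $|\Delta^i(S\setminus S(x))|=\binom{\b}{k-i}$, $|\Delta^i(S(x)\setminus x)|=\binom{\c}{k-1-i}$ and the elementary bound $|\Delta^i(S)|\ge |\Delta^{i-1}(S(x)\setminus x)|+|\Delta^i(S(x)\setminus x)|$ (or the $\Delta^i(S\setminus S(x))$ version), together with the upper bound $|\Delta^i(S)|\le\binom{\a}{k-i}$ — which is immediate from $|\Delta^i(I_{n,k}(m))|=\binom{\a}{k-i}$ and monotonicity, or from iterating Kruskal--Katona in reverse — one gets $|\Delta^i(S)|=\binom{\a}{k-i}$ for every $i$, as desired.

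The main obstacle is the choice of the pivot element $x$: the induction only closes if $x$ can be chosen simultaneously to have degree small enough that $\b\ge_{lex}\a-1$ (so Lemma~\ref{lem:abc} applies) and small enough that the two sub-families are honest instances of the induction hypothesis. The degree-averaging argument handles the first, and the bound $a_0\le n$ keeps everything consistent; but verifying the auxiliary inequalities $b_j\ge k-j-1$, $c_j\ge k-1-j-1$ needed to invoke the second (``moreover'') half of Lemma~\ref{lem:abc} in the equality case requires a careful, if routine, examination of how binomial decompositions of the two parts relate to that of $m$ inside a bounded ground set. A secondary point requiring care is making the induction well-founded: one inducts on $k$ for the $S(x)\setminus x$ term and on $|S|$ for the $S\setminus S(x)$ term, so the induction hypothesis must be stated for all smaller $(k,m)$ pairs in the product order, with the base cases $k=1$ (where $\Delta(S)=\{\emptyset\}$, trivial) and $m=1$ (also trivial) checked directly.
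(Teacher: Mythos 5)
Your argument for the inequality part is essentially the paper's: split at an element $x$ of (at most) average degree, bound $|\Delta(S)|\ge|\Delta(S\setminus S(x))|+|\Delta(S(x)\setminus x)|$, apply induction to the two parts, and invoke Lemma~\ref{lem:abc} after checking $\b\ge_{\mathrm{lex}}\a-1$ via degree averaging. Be aware that the ``short binomial computation'' you defer ($km/n\le m-\binom{\a-1}{k}$) is exactly the content of Lemma~\ref{lem:a-1<b} in the paper, whose proof is a nontrivial induction over the terms of the decomposition; also, your ``second regime'' (using $\binom{\c}{k-1}\ge\binom{\b}{k-1}$ when $|S(x)|$ is the larger part) is both unnecessary once $x$ is chosen by averaging and not actually justified, since $\binom{\b}{k-1}$ can exceed $\binom{\b}{k}$.

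The genuine gap is in the ``furthermore'' part. Your plan closes the argument with the upper bound $|\Delta^i(S)|\le\binom{\a}{k-i}$, claimed to be ``immediate from $|\Delta^i(I_{n,k}(m))|=\binom{\a}{k-i}$ and monotonicity, or from iterating Kruskal--Katona in reverse.'' No such upper bound is available: colex segments \emph{minimize} shadows, so knowing the value on $I_{n,k}(m)$ gives no control from above on $|\Delta^i(S)|$ for a general $S$, and Kruskal--Katona only produces lower bounds; indeed this upper bound is precisely the assertion to be proved. Without it, your chain of lower bounds only reproduces $|\Delta^i(S)|\ge\binom{\a}{k-i}$, which is already known. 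The missing idea is set-theoretic, not numerical: equality in $|\Delta(S)|=|\Delta(S\setminus S(x))|+|\Delta(S(x)\setminus x)|$ (equation \eqref{eq:tight30} in the paper) forces the containment $S(x)\setminus x\subseteq\Delta(S\setminus S(x))$, which propagates to $\Delta^{i-1}(S(x)\setminus x)\subseteq\Delta^{i}(S\setminus S(x))$ and hence yields the exact count $|\Delta^i(S)|=|\Delta^i(S(x)\setminus x)|+|\Delta^i(S\setminus S(x))|$ for every $i$ (equation \eqref{eq:tight50}). Combining that exact count with the inductive equalities for the two parts and with the identity $\binom{\a}{k-i}=\binom{\b}{k-i}+\binom{\c}{k-i-1}$ from the ``moreover'' clause of Lemma~\ref{lem:abc} is what gives $|\Delta^i(S)|=\binom{\a}{k-i}$; your verification that the side conditions $b_j\ge k-j-1$, $c_j\ge k-2-j$ hold for genuine binomial decompositions is fine, but the containment-propagation step must replace the false upper bound.
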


We will  use the following Lemma to ensure that condition \eqref{eq:cond0} in Lemma \ref{lem:abc} holds. 

\begin{lemma}\label{lem:a-1<b} Let $n,k$ be positive integers with $n> k>1$ and $S\subset \binom{[n]}{k}$ be  a family of $k$--subsets of $[n]$ with cardinality $|S|=m>1$ and support $[n]$. Let $x\in [n]$ be an element with minimum degree in $S$. Let $\binom{\a}{ k}\stackrel{b}{=} m$ be the $k$--binomial decomposition of $m$ and $\binom{\b}{k}\stackrel{b}{=} |S|-d_S(x)$. Then, $\b$ is not the empty sequence and
\begin{equation}\label{eq:a-1<b}
\b\ge_{lex}\a-1.
\end{equation}
 \end{lemma}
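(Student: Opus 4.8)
The plan is to work directly with the $k$-binomial decomposition $\a=(a_0>a_1>\cdots>a_t)$ of $m$ and to compare $|S|-d_S(x)$ with $\binom{\a-1}{k}=\binom{a_0-1}{k}+\binom{a_1-1}{k-1}+\cdots+\binom{a_t-1}{k-t}$. First I would observe that since $x$ has minimum degree among the (nonempty) degrees and $\sum_{y\in[n]}d_S(y)=k\,|S|=k\,m$ while the support has size $n$, we get $d_S(x)\le \lfloor km/n\rfloor$. The key point is to bound $n$ from below using the support condition: every element of $[n]$ lies in some set of $S$, and the union of any family realizing $\binom{\a}{k}$ via an initial segment of colex has support exactly $a_0$ (assuming $a_0$ is the largest "coordinate" used), so in general $n\ge a_0$. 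This gives $d_S(x)\le km/a_0$. Then I would want to show $m - km/a_0 \ge \binom{\a-1}{k}$, i.e. $m(1-k/a_0)\ge \binom{\a-1}{k}$, equivalently $m\,\frac{a_0-k}{a_0}\ge\binom{\a-1}{k}$.

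The second step is the numerical heart: relate $\binom{\a}{k}$ and $\binom{\a-1}{k}$. Using the Pascal identity termwise, $\binom{a_j}{k-j}=\binom{a_j-1}{k-j}+\binom{a_j-1}{k-j-1}$, so $\binom{\a}{k}=\binom{\a-1}{k}+\binom{\a-1}{k-1}$ (where $\binom{\a-1}{k-1}$ is the shifted sequence shadow-style sum, after checking the index bookkeeping matches the definitions in the excerpt, paying attention to the last term $a_t\ge k-t$). Hence $m-\binom{\a-1}{k}=\binom{\a-1}{k-1}$, and it suffices to prove $d_S(x)\le \binom{\a-1}{k-1}$, i.e. $km/a_0\le \binom{\a-1}{k-1}$. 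Since $\binom{\a-1}{k-1}=\binom{a_0-1}{k-1}+\cdots$ dominates its first term $\binom{a_0-1}{k-1}$, and $m=\binom{a_0}{k}+\cdots$, a clean sufficient inequality is $k\binom{a_0}{k}/a_0\le \binom{a_0-1}{k-1}$, which holds with equality since $k\binom{a_0}{k}=a_0\binom{a_0-1}{k-1}$; the lower-order terms then need the analogous termwise comparison $k\binom{a_j}{k-j}/a_0\le\binom{a_j-1}{k-1-j}$-type bound, which should follow because $a_j<a_0$ makes the factor work in our favor. I would then need to separately dispose of the trivial degenerate configurations (e.g. $\ell(\a)=1$, i.e. $m=\binom{a_0}{k}$, where one checks $\b=(a_0-1,a_0-2,\ldots)$ directly) and confirm $\b$ is nonempty, which amounts to $|S|>d_S(x)$, true because $m>1$ and the minimum degree is at most $m-1$ unless all sets share $x$ — but then a second element has degree $\le m-1$ as well and, using $k>1$, the support argument still forces $|S|-d_S(x)\ge 1$.

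The main obstacle I anticipate is not the first-term estimate but the bookkeeping for the \emph{lower-order terms} of the binomial decomposition and making the lexicographic comparison $\b\ge_{lex}\a-1$ precise rather than just the inequality $\binom{\b}{k}\ge\binom{\a-1}{k}$ — these are not obviously equivalent because $\b$ is a genuine $k$-binomial decomposition whereas $\a-1$ need not be one (its last term could drop below the required $k-t$ threshold, or $a_0-1$ could coincide with a later shifted term). So after establishing $\binom{\b}{k}=\binom{\a}{k}-d_S(x)\ge\binom{\a}{k}-\binom{\a-1}{k-1}=\binom{\a-1}{k}$, I would invoke monotonicity of the colex initial segments (a larger cardinality has a lexicographically at-least-as-large decomposition) — but since $\a-1$ may fail to be a valid decomposition, the careful move is to compare $\b$ with the actual $k$-binomial decomposition of $\binom{\a-1}{k}$ and then check that this decomposition is itself $\ge_{lex}\a-1$ as sequences, which is a short direct argument about what happens when you "repair" $\a-1$ into canonical form (repairs only move things lexicographically down or leave them equal). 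That last sequence-level comparison, together with transitivity of $\ge_{lex}$, closes the proof.
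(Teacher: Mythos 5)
Your plan has the right skeleton (minimum degree at most $km/n$, the Pascal identity $\binom{\a}{k}=\binom{\a-1}{k}+\binom{\a-1}{k-1}$, reduction to bounding $d_S(x)$ by $\binom{\a-1}{k-1}$), but the step where you replace $n$ by $a_0$ destroys the estimate. The purely numerical claim you reduce to, $m(1-k/a_0)\ge\binom{\a-1}{k}$, is false: for $k=3$ and $\a=(10,9)$ one has $m=156$, $m(1-3/10)=109.2$, while $\binom{\a-1}{k}=\binom{9}{3}+\binom{8}{2}=112$. Equivalently, the termwise bound you hope for, $\frac{k}{a_0}\binom{a_j}{k-j}\le\binom{a_j-1}{k-j-1}$, amounts to $ka_j\le(k-j)a_0$, which fails whenever $a_j=a_0-j$ and $a_0>k$: the factor works against you, not for you. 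The lemma survives only because $n$ is genuinely larger than $a_0$ in such configurations; the paper's proof first disposes of $m\le k$ and $m=\binom{n}{k}$, then uses $a_i\le n-i-1$ for every $i$ and keeps the factor $(1-k/n)$, proving by induction on $j$ the non-termwise inequality $\sum_{i\le j}\binom{a_i}{k-i}(1-k/n)\ge\sum_{i\le j}\binom{a_i-1}{k-i}+\frac{j+1}{n}\binom{a_j-1}{k-j-1}$, whose surplus term is carried forward at each step. A naive term-by-term comparison fails even with $n=a_0+1$ once $a_0\ge 2k$, so this inductive bookkeeping is not optional.

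There is a second gap at the lex endgame. When $\a-1$ has tail terms with $a_i=k-i$, the genuine $k$-binomial decomposition of $\binom{\a-1}{k}$ is the truncation $(a_0-1,\ldots,a_{t'}-1)$ at the last index $t'$ with $a_{t'}>k-t'$, and this truncation is lexicographically \emph{smaller} than $\a-1$ (a proper prefix is lex-smaller, consistently with the monotonicity you invoke), not $\ge_{lex}\a-1$ as your final paragraph asserts. Hence $\binom{\b}{k}\ge\binom{\a-1}{k}$ alone does not yield $\b\ge_{lex}\a-1$: if equality held, $\b$ would be exactly that truncation and the conclusion would fail. In this deficient case one needs the strict bound $|S|-d_S(x)\ge\binom{\a-1}{k}+1$, which is precisely what the leftover positive fractional terms in the paper's inequality deliver (the ``additional unit'' obtained because $|S|-d_S(x)$ is an integer strictly exceeding $\binom{\a-1}{k}$). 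Both the quantitative core and the final lex comparison of your proposal need to be repaired along these lines.
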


\begin{proof} Since $n>k>1$, $|S|>1$ and $x$ has  minimum degree, then not all the sets in $S$  contain $x$ and hence $|S|-d_S(x)>0$. 
This implies that $\b$ is not the empty sequence.
	
 Let $\binom{\a}{k}=\sum_{i=0}^t\binom{a_i}{k-i}$. If $|S |\le k$ then $\binom{\a-1}{k}=0$ and the statement trivially follows as $\binom{\b}{k}>0$.	
	 Assume $|S|>k$. The statement also follows trivially  if $|S|=\binom{n}{k}$. Assume $|S|<\binom{n}{k}$. In particular, $a_0\le n-1$ and therefore $a_i\le n-i-1$ for each $i=0,\ldots ,t$.   The average degree in $S$ of elements in $[n]$  is $\frac{k}{n}|S|$. Therefore, if $t'\le t$ is the largest subscript $i$ such that $a_i>k-i$ (which exists as $|S|>k$), we have
	\begin{equation}\label{eq:dom1}
	|S|-d_S(x)\ge |S|\left(1-\frac{k}{n}\right)=\sum_{i=0}^t\binom{a_i}{k-i}\left(1-\frac{k}{n}\right)=\sum_{i=0}^{t'}\binom{a_i}{ k-i}\left(1-\frac{k}{n}\right)+(t-t')\left(1-\frac{k}{n}\right),
	\end{equation}
	Let us show that, for each  $j\le t'$,
	\begin{equation}\label{eq:dom2}
	\sum_{i=0}^{j}\binom{a_i}{k-i}\left(1-\frac{k}{n}\right)\ge \sum_{i=0}^{j}\binom{a_i-1}{k-i}+\frac{j+1}{n}\binom{a_{j}-1}{k-j-1}.
	\end{equation}
	For $j=0$ we have, using $a_0\le n-1$,
	$$
	\binom{a_0}{k}(1-\frac{k}{n})=\binom{a_0}{ k}-\frac{k}{n}\frac{a_0}{k}\binom{a_0-1}{k-1}\ge \binom{a_0}{ k}-(1-\frac{1}{n})\binom{a_0-1}{k-1}=\binom{a_0-1}{ k}+\frac{1}{n}\binom{a_0-1}{k-1},
	$$
	and \eqref{eq:dom2} holds for $j=0$. Let $j>0$. By using induction and  $a_j\le \min\{a_{j-1}-1,n-j-1\}$, we similarly have
	\begin{align}
\sum_{i=0}^{j}&\binom{a_i}{k-i}(1-\frac{k}{n})\ge 
\sum_{i=0}^{j-1}\binom{a_i-1}{k-i}+\frac{j}{n}\binom{a_{j-1}-1}{ k-j}+(1-\frac{k}{n})\binom{a_{j}}{k-j}
\nonumber \\ \stackrel{a_{j-1}-1\geq a_j}{\ge} &\sum_{i=0}^{j-1}\binom{a_i-1}{ k-i}+\binom{a_j}{k-j}(1-\frac{k-j}{n})=\sum_{i=0}^{j}\binom{a_i-1}{ k-i}+\binom{a_j-1}{k-j-1}(1-\frac{k-j}{n}-\frac{a_j-(k-j)}{n}) \nonumber \\
\stackrel{-a_j\geq-n+j+1}{\ge} &\sum_{i=0}^{j}\binom{a_i-1}{ k-i}+\frac{j+1}{n}\binom{a_j-1}{k-j-1}. \nonumber
\end{align}
	The statement of the Lemma follows from \eqref{eq:dom1} and \eqref{eq:dom2} with  $j=t'$, as $\binom{\a-1}{ k}=\sum_{i=0}^{t'}\binom{a_i-1}{k-i}$ (the additional unit needed when $a_i=k-i$ for some $i$, comes from the integer part of the last summand).
\end{proof}

\begin{proof} (of Theorem \ref{thm:kk+fg})
We prove the result by a triple induction on $m$, $n$ and $k$. The result holds with equality when $k=1$ for all $n$ and $m$ as $\Delta(S)=\{\emptyset\}$ in this case. Moreover, when $n=k$ the result also holds since $|S|=1=\binom{k}{k}$ and $|\Delta(S)|=k=\binom{k}{k-1}$.
The result also holds for $m=1$ and all $n\geq k$ by the same reason.

Let $n>k>1$. We may assume that $n$ is an element with minimum degree in $S$, and assume that $|S|>1$, and thus $d_S(n)<|S|$. Let  $\binom{\b}{ k}\stackrel{b}{=}|S|-d_S(n)$. Partition $S=S_0\sqcup S_1$ with $S_0=S(n)=\{x\in S\:|\: n\in x\}$ and $S_1=S\setminus S_0$. 
Denote by
$$
S'_0=S_0\setminus n=\{x\setminus n\:|\: x\in S_0,n\in x\}\subseteq \binom{[n-1]}{k-1}.
$$
We observe that
\begin{equation}\label{eq:rel_both}
|\Delta(S)|\geq |\Delta(S_1)|+|\Delta(S_0')|.
\end{equation}
as all the sets in $\Delta(S_0')$ can be though of as the sets in $\Delta(S)$ containing $n$.
Let $\binom{\bc}{k-1}\stackrel{b}{=}|S_0|=|S'_0|$, so that 
\begin{equation}\label{eq:sum_is_ok}
\binom{\a}{k}=\binom{\b}{k}+\binom{\c}{k-1}.
\end{equation} By induction, 
\begin{equation}\label{eq.c_k-2}
|\Delta(S_0')|\geq \binom{\bc}{k-2}.
\end{equation}
On the other hand, we have $S_1\subseteq \binom{n-1}{k}$ and  $\binom{\b}{k}\stackrel{b}{=}|S_1|=|S|-d_S(n)$. By induction,
\begin{equation}\label{eq.b_k-1}
|\Delta(S_1)|\geq \binom{\bb}{k-1}.
\end{equation}
By Lemma  \ref{lem:a-1<b}, the conditions of Lemma~\ref{lem:abc} are fullfilled and therefore \eqref{eq:sum_is_ok} \eqref{eq:rel_both}, \eqref{eq.c_k-2} and \eqref{eq.b_k-1} give
\begin{equation}\label{eq:kk1}
|\Delta(S)|\ge \binom{\b}{k-1}+\binom{\c}{k-2}\ge \binom{\a}{k-1},
\end{equation}
completing the induction step and the proof of the first part of the Theorem.  

Suppose now that equality holds along \eqref{eq:kk1}. Thus, we have the equalities
\begin{align}
|\Delta(S_0')|&=\binom{\c}{k-2}, \label{eq:tight0}\\
 |\Delta(S_1)|&=\binom{\b}{ k-1},  \label{eq:tight1}\\
\binom{\a}{k-1}&=\binom{\b}{k-1}+\binom{\c}{ k-2},  \;\text{and} \label{eq:tight2}\\
|\Delta(S)|&= |\Delta(S_1)|+|\Delta(S_0')|. \label{eq:tight30}
\end{align}
By \eqref{eq:tight2} and the second part of Lemma~\ref{lem:abc}, for each $i\ge 0$, we have
\begin{equation}\label{eq:tight3}
\binom{\a}{k-i}=\binom{\b}{ k-i}+\binom{\c}{k-i-1}.
\end{equation}
On the other hand, by induction, equalities in  \eqref{eq:tight0} and \eqref{eq:tight1} lead to
\begin{equation}
\label{eq:tight60}
|\Delta^i(S_0')|=\binom{\c}{ k-i-1}\;\text{and} \; |\Delta^i(S_1)|=\binom{\b}{k-i},
\end{equation}
and moreover,
\begin{equation}\label{eq:tight50}
|\Delta^i(S)|= |\Delta^i(S'_0)|+|\Delta^i(S_1)|,
\end{equation}
as the equality in \eqref{eq:tight30} implies that no other sets in $\Delta(S)$ exist beyond those containing $n$ (accounted in $\Delta(S_1)$), and those not containing $n$ (accounted in  $\Delta(S'_0)$). In particular, all the sets in $\Delta^i(S)$ are either accounted in  $\Delta^i(S'_0)$ if they do not contain $n$, or accounted in $\Delta^i(S_1)$ if they contain $n$.
 
  By combining \eqref{eq:tight3} with \eqref{eq:tight60} and \eqref{eq:tight50}   we get
$$
 |\Delta^i(S)|=\binom{\c}{ k-i-1}+\binom{\b}{ k-i}= \binom{\a}{k-i},
$$
completing the proof of the second part of the statement.
\end{proof}


\section{Characterization of  extremal families}\label{sec:charac}

In this section we prove Theorem \ref{thm:charac}.

\begin{proof}[Proof of Theorem \ref{thm:charac}] Assume first that $S$ is extremal. We follow an argument by Frankl~\cite{frankl1984new}. Arguing for a contradiction, suppose that there is $i\in [n]$ such that $|S(i)|>\binom{\a-1}{k-1}$.  Let $\binom{\c}{ k-1}\stackrel{b}{=} |S(i)|>\binom{\a-1}{k-1}$ and $S_i'=S(i)\setminus i$. We have
\begin{equation}\label{eq:tight4}
|\Delta (S(i))|= |\Delta (S'_i)|+|S'_i|.
\end{equation}
as the set in $\Delta (S(i))$ either contains $i$, and hence is accounted in $\Delta (S'_i)$, or it does not contain $i$, and hence there are $|S'_i|$ of them.
By Theorem~\ref{thm:kk+fg} and our assumption on $|S(i)|$, 
\begin{equation}\label{eq:tight70}
|\Delta (S)|\ge |\Delta (S(i))|= |\Delta (S'_i)|+|S'_i|> \binom{\bc}{k-2} +\binom{\a-1}{ k-1}\geq\binom{\a-1 }{k-2}+\binom{\a-1}{ k-1}=\binom{\a}{ k-1}, 
\end{equation}
contradicting the extremality of $S$. Therefore, we reach a contradiction and conclude that, $|S(i)|\leq \binom{\a-1}{ k-1}$ for each $i\in[n]$.

The same argument shows that, if $|S(i)|=\binom{\a-1}{ k-1}$, then equality must hold along the above inequalities and  $|\Delta (S(i))|= \binom{\bc}{k-2}$ so that $S(i)$ is extremal, proving \ref{en:t22}. Moreover, there is equality in \eqref{eq:tight70} and thus $|\Delta(S)|=|\Delta(S(i))|$, proving 
\ref{en:t21}.

Suppose that $|S(i)|<\binom{\a-1}{ k-1}$. Let $\binom{\b}{ k}\stackrel{b}{=}|S\setminus S(i)|$ which is strictly positive  and  satisfies $\binom{\b}{ k}> \binom{\a-1}{k}$ by $|S(i)|<\binom{\a-1}{k-1}$. As in \eqref{eq:rel_both} we have
\begin{equation}\label{eq:tight5}
|\Delta (S)|\ge |\Delta (S\setminus S(i)|+|\Delta (S'_i)|,
\end{equation}
as adding $i$ to the subsets in $\Delta (S'_i)$ we obtain subsets in $\Delta (S)\setminus \Delta (S\setminus S(i))$. As $\binom{\b}{ k}> \binom{\a-1}{ k}$ then $\b\geq_{\text{lex}} \a-1$ (actually $\b>_{\text{lex}} \a-1$ as $\b$ is the sequence of a non-zero $k$-binomial decomposition), thus we can apply Lemma~\ref{lem:abc} and we have
$$
|\Delta (S)|\ge |\Delta (S\setminus S(i))|+|\Delta (S'_i)|\ge  \binom{\b}{ k-1}+\binom{\c}{ k-2}\ge \binom{\a}{ k-1}=|\Delta (S)|,
$$
so equality holds along the above inequalities. In particular, both $S(i)$ and $S\setminus S(i)$ are extremal, proving \ref{en:t12}, and the equality  \ref{en:t13} holds, and  there is equality in \eqref{eq:tight5} thus proving \ref{en:t11}.

The reciprocal implication follows by just considering one element
$i\in [n]$, thus showing the second part of the result.
 If $i$ is such that $S(i)<\binom{\ba-1}{k-1}$, then we can repeat the argument above to conclude that there is equality everywhere (due to the four assumptions), and hence $S$ is extremal.
If $i$ is such that $S(i)=\binom{\ba-1}{k-1}$, then the extremality on $S'_i$ implies that
$|S'_i|+|\Delta(S'_i)|=\binom{\ba}{k-1}$. Then the containment $\Delta(S\setminus S(i))\subseteq S'_i$ implies that $\Delta(S)$ (whose sets are either in $\Delta(S\setminus S(i))$, or in $\Delta (S'_i)\times \{i\}$ or in $S_i'$), have all the sets accounted for with just $|S'_i|+|\Delta(S'_i)|$, so $|S'_i|+|\Delta(S'_i)|\geq |\Delta(S)|$ implies that there is equality in \eqref{eq:tight70} and the extremality of $S$ follows.
\end{proof}


\subsection{Examples on the necessity of the conditions}\label{subsec:examples}

In this section  we give  examples showing that the conditions for the characterization of extremal sets in Theorem \ref{thm:charac} are  mutually independent, so none of them  can be ommitted.  Conditions \ref{en:t11} and \ref{en:t12} are of structural nature while condition \ref{en:t13} is of quantitative nature. A natural question is to ask if the structural conditions may imply the quantitative one. We next give an example of a set $S$ which satisfies conditions \ref{en:t11} and \ref{en:t12} for each $x\in [n]$ but fails to be extremal because the condition \ref{en:t13} fails to hold, showing that the last quantitative  condition is independent from the structural ones.

\begin{example} For each $m\geq 3$, $n\ge m+3\ge 6$ we denote by $S_{n,k,m}$ the family
	$$
	S_{n,k,m}=\binom{[n]}{ k}\setminus \left[\cup_{\{i,j\}\in \binom{[m]}{2}}\nabla^{k-2}(\{i,j\}) \right],
	$$
	where $\nabla^{k-2}(X)$ denotes the $(k-2)$--iterated upper shadow of the family $X$. The family $S_{n,k,m}$ consists of all $k$--subsets of $[n]$ which  contain none of  the sets  $\{i,j\}$ with $i\neq j$ and $i,j\leq m$. Its cardinality is
	$$
	|S_{n,k,m}|=\binom{n}{k}-\sum_{i=2}^{k}\binom{m}{i}\binom{n-m}{k-i}=\binom{n-m}{ k}+\binom{n-m}{ k-1}\binom{m}{ 1},
	$$
	and its lower shadow is the family of all $(k-1)$--subsets of $[n]$ containing no pair of elements in $[m]$:
	$$
	\Delta(S_{n,k,m})=S_{n,k-1,m}.
	$$
	For every $x\in [n]$ we have
	{\small \begin{align*}
	|S_{n,k,m}(x)|&=\begin{cases}|S_{n-1,k-1,m}|, &x\in [n]\setminus [m],\\
	|S_{n-1,k-1,m-1}|,& x\in [m],\end{cases} &  
	|\Delta(S_{n,k,m}(x))|&=\begin{cases}|S_{n-1,k-2,m}|, &x\in [n]\setminus [m],\\
	|S_{n-1,k-2,m-1}|,& x\in [m],\end{cases},\\
	|S_{n,k,m}\setminus S_{n,k,m}(x)|&=\begin{cases}|S_{n-1,k,m}|, &x\in [n]\setminus [m],\\
	|S_{n-1,k,m-1}|,& x\in [m],\end{cases},&	
	|\Delta (S_{n,k,m}\setminus S_{n,k,m}(x))|&=\begin{cases}|S_{n-1,k-1,m}|, &x\in [n]\setminus [m],\\
	|S_{n-1,k-1,m-1}|,& x\in [m],\end{cases}.
	\end{align*}}

	The example will be constructed from $S_{n,k,m}$ by deleting a family $L\subset \binom{[n]\setminus [m]}{ k}$. For every such choice of $L$, let 
	$$
	S=S_{n,k,m}\setminus L.
	$$
	Since  every set $y$ in the shadow of   $\binom{[n]\setminus [m]}{ k}$ is contained in the shadow of  $y\cup \{i\}\in S$ for each $i\in [m]$ we have
	$$
	\Delta (S)=\Delta (S_{n,k,m}).
	$$
	Moreover, by the same reason, for every $x\in [n]$, we also have 
	$$
	\Delta (S(x))=\Delta (S_{n,k,m}(x))\quad \text{and}\quad \Delta (S\setminus S(x))=\Delta (S_{n,k,m}\setminus S_{n,k,m}(x)).
	$$ 
	For  positive integers $t$ and $r$ with $t\geq r$, let $n=tk+m$ and let $L$ be an $r$--regular family of $k$--subsets of $[n]\setminus [m]$, each element in $[n]\setminus [m]$ belongs to $r$ subsets of $L$ (such a family exists since $n-m=tk$ is a multiple of $k$, and $|L|=tr$). 
	
Appropriate choices of the parameters $k,m$ and $t$ provide examples of  sets $S$  satisfying conditions \ref{en:t11} and \ref{en:t12} for each $x\in [n]$ but fail to be extremal because the condition \ref{en:t13} does not hold.

For example, for $m=k=4$, $t=29$ and $r=2$ we have $n=tk+4=120$ and  $|L|=tr=58$ and $S_{n,k,m}$ satisfies
\begin{align*}
|S_{n,k,m}|&=\binom{119}{ 4}+\binom{112}{ 3}+\binom{104}{ 2}+\binom{ 58}{ 1}\\
|\Delta (S_{n,k,m})|=|S_{n,k-1,m}|&=\binom{119}{ 3}+\binom{112}{ 2}+\binom{105}{ 1},
\end{align*}
so that $S_{n,k,m}$ is an extremal set. However, for the set $S=S_{n,m,k}\setminus L$
\begin{align*}
|S|=|S_{n,k,m}|-|L|&= 	
\binom{119}{ 4}+\binom{112}{ 3}+\binom{104}{ 2},
\end{align*}	
and $\Delta (S)=\Delta (S_{n,k,m})=S_{n,k-1,m}$, so that  $S$ is not extremal. Nevertheless, the families $S(x)$ and $S\setminus S(x)$ are extremal for all $x\in [n]$. Indeed, if $x\in [n]\setminus [m]$, then $x$ is contained in $r=2$ sets in $L$ and  
\begin{align*}
|S(x)|=|S_{n-1,k-1,m}(x)|-2&=	
\binom{118}{ 3}+\binom{111}{ 2}+\binom{102}{ 1}\\ 
|\Delta (S(x))|=|S_{n-1,k-2,m}|&=\binom{118 2}+\binom{112}{ 1},\\ 
|S\setminus S(x)|=|S_{n-1,k,m}|-(tr-2)&=\binom{118}{ 4}+\binom{111}{ 3}+\binom{103}{ 2}+\binom{1}{ 1},\\
|\Delta (S\setminus S(x)|=|S_{n-1,k-1,m}|&=\binom{118}{ 3}+\binom{111}{ 2}+\binom{104}{ 1}.
\end{align*}
On the other hand, if $x\in [m]$ then $S(x)=S_{n,k,m}(x)$, which is extremal since $S_{n,k,m}$ is extremal itself, and 
\begin{align*}
|S\setminus S(x)|=|S_{n-1,k,m-1}|-tr&=\binom{118}{ 4}+\binom{114}{ 3}+\binom{112}{ 2}+\binom{52}{ 1},\\
|\Delta (S\setminus S(x)|=|S_{n-1,k-1,m}|&=\binom{118}{ 3}+\binom{114}{ 2}+\binom{113}{ 1}.
\end{align*}
Moreover, in all cases we have $S(x)\setminus \{x\}\subset \Delta (S\setminus S(x))$ so that the set $S$ fails to be extremal because \ref{en:t13} alone does not hold. \qed
\end{example}

The structural conditions are also easily seen to be independent of each other. Some examples are given below. 
We recall that $I_{n,k}(m)$ denotes the initial segment of length $m$ in the colex order. We use the notation $S\vee S'=\{x\cup y: x\in S, y\in S'\}$ to denote the family of all sets obtained by unions of a set  in $S$ and a set in $S'$.

\begin{example} A set $S$ satisfying \eqref{eq:b>a-1} strictly and \ref{en:t11} and \ref{en:t13} but not \ref{en:t12}.  Let $k\ge 3$ and $n>k$. Let  $m_1= \binom{n-1}{ k}+\binom{n-2}{ k-1}\stackrel{b}{=}\binom{\b }{ k}$
 and  $m_2=\binom{n-2}{ k-2}\stackrel{b}{=}\binom{\c}{ k-1}$. Let $m'_1=\binom{n-1}{ k}$ and let $B$ be the family of  $(k-1)$--subsets of $[n+1,2n-2]$.  Define
$$
S=(I_{n,k}(m'_1)\cup (B\vee \{2n\}))\cup (I_{n,k-1}(m_2)\vee \{n\}).
$$
We have,
$$
|S|=\binom{n-1}{ k}+\binom{n-2}{ k-1}+\binom{n-2}{ k-2}=\binom{n}{ k}\stackrel{b}{=}\binom{\a}{ k},
$$
so that $\b>_{\text{lex}}\a-1$ and  \eqref{eq:b>a-1} holds. Now 
$$
\binom{\a}{ k-1}=\binom{n}{ k-1}=\binom{n-1}{ k-1}+\binom{n-2}{ k-2}+\binom{n-2}{ k-3}=\binom{\b}{ k-1}+\binom{\c}{ k-2},
$$
and \ref{en:t13} holds. Moreover, with the choice $x=n$, $S(x)\setminus x$ is an initial segment of  length $\binom{n-2}{ k-2}$ which is contained in the initial segment of length $\binom{n-1}{ k-1}$ contained  in $\Delta (S\setminus S(x))$, so \ref{en:t11} holds.  

However, $S\setminus S(x)$ is not extremal, as
$$
|\Delta (S\setminus S(x))|=\binom{n-1}{ k-1}+|\Delta (B)|=\binom{n-1}{ k-1}+\binom{n-2}{ k-2}+\binom{n-2}{ k-3}>\binom{n-1}{ k-1}+\binom{n-2}{ k-2}.
$$
and \ref{en:t12} does not hold.

By choosing instead 
\[
S=I_{n,k}(m_1)\cup (C\vee \{n+1\}),
\]
with $C$ an arbitrary nonextremal family of $(k-1)$-sets with cardinality $|C|=\binom{n-2}{ k-2}$ satisfying $C\subset \Delta (I_{n,k}(m_1))$
 we obtain a set satisfying \eqref{eq:b>a-1} and, with the choice $x=n+1$, also satisfying \ref{en:t11} and \ref{en:t12} but now $S(x)\setminus x$ is not extremal. \qed
\end{example}

\begin{example} A set $S$ satisfying \eqref{eq:b>a-1} strictly and \ref{en:t12} and \ref{en:t13} but not \ref{en:t11}.  Let $k\ge 3$ and $n>k$. Let  $m_1= \binom{n-1}{ k}+\binom{n-2}{ k-1}\stackrel{b}{=}\binom{\b }{ k}$
	 and  $m_2=\binom{n-2}{ k-2}\stackrel{b}{=}\binom{\c}{ k-1}$.  Define
$$
S=I_{n,k}(m_1)\cup (I_{[n+2,2n],k-1}(m_2)\vee \{2n+1\}),
$$
where $I_{[n+2,2n],k-1}(m_2)$ is an initial segment of length $m_2$ of $(k-1)$--subsets of the interval $[n+2,2n]$.  As before, we have,
$$
|S|=\binom{n-1}{ k}+\binom{n-2}{ k-1}+\binom{n-2}{ k-2}=\binom{n}{ k}\stackrel{b}{=}\binom{\a}{ k},
$$
so that $\b>_{\text{lex}}\a-1$
 and  \eqref{eq:b>a-1} holds. We also have 
$$
\binom{\a}{ k-1}=\binom{n}{ k-1}=\binom{n-1}{ k-1}+\binom{n-2}{ k-2}+\binom{n-2}{ k-3}=\binom{\b}{ k-1}+\binom{\c}{ k-2},
$$
and \ref{en:t13} holds. Moreover, with the choice $x=2n+1$, both $S(x)$ and $S\setminus S(x)$ are initial segments, and hence extremal sets, so that \ref{en:t12} holds, but $S(x)\setminus x$ is not contained in $\Delta (S\setminus S(x))$, so \ref{en:t11} does not hold.\qed 
\end{example}

\section{The key inequality}\label{sec:ineq}

In this section we give the proof of Lemma \ref{lem:abc}. As mentioned in the Introduction, Lemma \ref{lem:abc} follows from the slightly more general Lemma~\ref{lem:abck} and Corollary~\ref{cor.1} below. The first statement shows the case $i=1$ of \eqref{eq:abc-i} in a more general form (the reason being mostly technical, as the base cases of the induction argument we have fit this scenario better). The second statement, Corollary~\ref{cor.1}, shows the inequaliy \eqref{eq:abc-i} for $i>1$ and deal with the case of equality in Lemma \ref{lem:abc}, plus some further information that allows us to show the characterization of the cardinalities for which the colex is the unique extremal family (up to isomorphism) in Theorem~\ref{thm:ucolex}.

 

\begin{lemma}\label{lem:abck} Let $m$ and $k,k',k''$ be positive integers. Let   $\a,\b,\c$ be such that $\binom{\a}{ k}\stackrel{b}{=} m$, and $\b$ and $\c$ be
	\begin{itemize}
		\item  strictly decreasing sequences of non-negative integers
		\item $b_j\geq k'-j-1$ and $c_j\geq k''-j-1$ for all $j$ for which they exists ($\b$ or $\c$ may be the empty sequence).
	\end{itemize} 
Assume that 
\begin{equation}\label{eq:abc01}
\binom{\a}{ k}=\binom{\b}{ k'}+\binom{\c}{ k''},
\end{equation}
If  $k',k''\ge k$, then
\begin{align}
\binom{\a}{ k-1}\le \binom{\b}{ k'-1}+\binom{\c}{ k''-1}.\label{eq:abc1}
\end{align}
and
\begin{align}
\binom{\a-1}{ k-1}\le \binom{\b-1}{ k'-1}+\binom{\c-1}{ k''-1}.\label{eq:abc11}
\end{align}
Moreover, if $k'=k\geq 2$ and $k''=k-1$ then \eqref{eq:abc1} and \eqref{eq:abc11} holds provided that $\b$ is not empty and
\begin{equation}\label{eq:cond1}
\b\ge_{\text{lex}}  \a-1.
\end{equation}
\end{lemma}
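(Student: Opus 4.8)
The plan is to prove all three assertions by induction on $m$ (with a secondary induction on the lengths of the sequences). The base cases are $m$ small: when $\a$ is the empty sequence all three inequalities are trivial, and when $\ell(\a)=1$, i.e. $m=\binom{a_0}{k}$, the decomposition $\binom{a_0}{k}=\binom{\b}{k'}+\binom{\c}{k''}$ together with $k',k''\ge k$ forces a short, directly verifiable configuration. For the inductive step I would peel off the top term of $\a$, and correspondingly of $\b$ (and of $\c$ when necessary), and reduce the identity \eqref{eq:abc01} to a smaller instance. Concretely, from $\binom{\a}{k}=\binom{\b}{k'}+\binom{\c}{k''}$ and $k',k''\ge k$ one expects $b_0\le a_0$ or a comparable bound; after subtracting $\binom{b_0}{k'}$ from both sides one gets a new equation of the same shape with strictly smaller $m$ (replacing $\a$ by its $k$-binomial decomposition), to which the induction hypothesis applies, and then one re-adds the top terms using the Pascal recursion $\binom{x}{j}=\binom{x-1}{j}+\binom{x-1}{j-1}$ to pass from level $k$ to level $k-1$. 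The inequality \eqref{eq:abc11} at $\a-1$ follows in parallel, or can be deduced from \eqref{eq:abc1} applied to shifted sequences, using that shifting all entries down by $1$ preserves strict decrease and the lower-bound hypotheses (since $k',k''$ also effectively drop).

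The core of the argument is the case $k'=k$, $k''=k-1$ under the extra hypothesis $\b\ge_{\text{lex}}\a-1$ with $\b$ nonempty; this is exactly the case feeding Lemma~\ref{lem:abc} and the Kruskal--Katona proof. Here the identity reads $\binom{\a}{k}=\binom{\b}{k}+\binom{\c}{k-1}$ and we want $\binom{\a}{k-1}\le\binom{\b}{k-1}+\binom{\c}{k-2}$. I would compare $a_0$ and $b_0$. If $b_0\ge a_0$: either $b_0>a_0$, and then $\binom{\b}{k}\ge\binom{b_0}{k}>\binom{a_0}{k-1}\ge\binom{\a}{k}$ would be too big unless things are tightly constrained, or $b_0=a_0$, in which case one cancels the top terms and recurses on $\a'=(a_1,\dots,a_t)$, $\b'=(b_1,\dots)$, checking that $\b'\ge_{\text{lex}}\a'-1$ still holds — this is where the lexicographic hypothesis is genuinely used, because dropping the (equal) leading term preserves it. If $b_0=a_0-1$, then $\binom{\b}{k}\le\binom{a_0-1}{k}$, so $\binom{\c}{k-1}=\binom{\a}{k}-\binom{\b}{k}\ge\binom{a_0-1}{k-1}$, which forces $c_0\ge a_0-1$; combined with $\c$ strictly decreasing and the constraint $\binom{\c}{k-1}\le\binom{a_0-1}{k-1}+(\text{lower order})$ this pins down $c_0$ and lets one again reduce. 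The lexicographic condition \eqref{eq:cond1} is precisely what rules out the bad scenario (illustrated in the paper by $\binom{2n-1}{n}=\binom{2n-1}{n-1}$) where $\b$ is too small and $\c$ too large, making the level-$(k-1)$ inequality fail.

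I expect the main obstacle to be the bookkeeping in the reduction step: after removing the leading term, the sequence $\a-1$ may acquire or lose a final unit (exactly the ``$a_i=k-i$'' boundary issue flagged at the end of the proof of Lemma~\ref{lem:a-1<b}), so one must track carefully how the lexicographic inequality $\b\ge_{\text{lex}}\a-1$ transforms, and handle separately the subcase where $a_t=k-t$ (the last term of the $k$-binomial decomposition is ``saturated''). A clean way to organize this is to split on whether $b_0=a_0$ or $b_0<a_0$ (the hypothesis already gives $b_0\ge a_0-1$ when $\b\ge_{\text{lex}}\a-1$ in the relevant range), treat the $b_0=a_0$ case by pure recursion on shorter sequences, and in the $b_0=a_0-1$ case absorb $\binom{a_0-1}{k-1}$ into the $\c$-side and reduce the level of the remaining identity, invoking the first part of the lemma (the $k',k''\ge k$ case already proved) for the residual terms. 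The non-negativity and $c_j\ge k''-j-1$, $b_j\ge k'-j-1$ hypotheses are used throughout to guarantee the binomial coefficients behave monotonically and that Pascal's identity does not stray outside the valid range; the counterexamples listed after Lemma~\ref{lem:abc} show each is needed, so the induction must be set up to actually consume them.
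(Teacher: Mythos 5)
There is a genuine gap, and it sits exactly at the hard point of the lemma. Your inductive step subtracts a top term ($\binom{b_0}{k'}$ in the general case, or passes to $b_0=a_0-1$ in the key case $k'=k$, $k''=k-1$) and then replaces the left-hand side by the $k$-binomial decomposition of the smaller number before invoking the induction hypothesis. But the induction hypothesis then bounds $\binom{\a''}{k-1}$ for the \emph{re-canonicalized} sequence $\a''$, whereas what you must bound is the shifted-down value of the original, non-canonical representation, and these can differ in the wrong direction. Concretely, take $k=3$, $\a=(5,4,3)$, $\b=(4,3,2)$, $\c=(5)$, so $\binom{\a}{3}=19=\binom{\b}{3}+\binom{\c}{2}$ and $\b\ge_{\text{lex}}\a-1$. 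In your case $b_0=a_0-1$, Pascal's rule and cancellation leave the identity $\binom{4}{2}+\binom{4}{2}+\binom{3}{1}=\binom{3}{2}+\binom{2}{1}+\binom{5}{2}=15$; the canonical $2$-binomial decomposition of $15$ is $\binom{6}{2}$, and the induction hypothesis only yields $\binom{6}{1}=6\le\binom{3}{1}+\binom{2}{0}+\binom{5}{1}=9$, while the inequality you actually need to recover $\binom{\a}{2}\le\binom{\b}{2}+\binom{\c}{1}$ is $\binom{4}{1}+\binom{4}{1}+\binom{3}{0}=9\le 9$. The missing link, ``the shifted-down value of the canonical decomposition dominates the shifted-down value of the two-headed representation,'' is false in general, and the auxiliary subadditivity you would need instead (relating $\binom{\a}{k-1}$ to $\binom{\a''}{k-1}+\binom{b_0}{k'-1}$ when $\binom{\a}{k}=\binom{\a''}{k}+\binom{b_0}{k'}$) is itself an instance of the lemma at the \emph{same} value of $m$, so your induction on $m$ (with a secondary induction on lengths) does not cover it; the argument is circular at this point. (Two smaller slips: when $b_0=a_0-1$ you write $\binom{\b}{k}\le\binom{a_0-1}{k}$, but the inequality goes the other way once $\b$ has further terms, so the claimed forcing of $c_0\ge a_0-1$ is unjustified; and the display $\binom{b_0}{k}>\binom{a_0}{k-1}\ge\binom{\a}{k}$ in the $b_0>a_0$ case is not correct as written, though that subcase is easily repaired.)

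This is precisely the obstruction the paper's proof is built to avoid. The paper never re-canonicalizes a partial sum. Instead (Section~\ref{subsec:proofabck}) it decrements the \emph{smallest} entry $a_r$ on the leading diagonal (not $a_0$ or $b_0$), introduces the wall $\binom{a_r-1}{k-r-1}$, and uses the recursive reduction of Lemma~\ref{lem:rec_const}/Corollary~\ref{cor:rec_const_fin} to produce exact \emph{translation-invariant} identities $\binom{\a}{k}\doteq\binom{\a'}{k}+S+R$ and $\binom{\b}{k'}+\binom{\c}{k''}\doteq\binom{\b''}{k'}+\binom{\c''}{k''}+S+C$, where the shared term $S$ cancels, the rubble $R$ (denominators $0$) vanishes after shifting the lower indices down by one, and the pavement $C$ becomes nonnegative. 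The induction hypothesis is applied to an exact decomposition of $\binom{\a'}{k}$, and the conclusion at level $k-1$ is recovered purely by translation invariance, so no information is lost in the descent. Your proposal correctly identifies where the lexicographic hypothesis must act and handles the easy subcase $b_0=a_0$ (which matches the paper's reduction via $(b_0,k')\ne(a_0,k)$), but without a mechanism of this kind — tracking the error terms exactly rather than re-decomposing — the inductive step in the cases $b_0=a_0-1$ and in the general $k',k''\ge k$ peeling does not close.
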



\begin{corollary}\label{cor.1} Let $\a$ be a $k$-binomial decomposition of a positive integer, and $\b$ and $\c$ be possibly empty strictly decreasing sequences of non-negative integers, with $b_j\geq k-j-1$ and $c_j\geq k-1-j-1$ for all $j$ for which they exists and satisfying
	\begin{equation}
	\label{eq:hip1}
	\binom{\a}{ k}\le \binom{\b}{ k}+\binom{\c}{ k-1} 
	\end{equation}
	If $\b$ is not empty and $\b\ge_{lex}\a-1$ then, for each $i\ge 1$, 
	\begin{equation}\label{eq:ai}
	\binom{\a}{ k-i}\le \binom{\b}{ k-i}+\binom{\c}{ k-i-1},
	\end{equation}
	and
	\begin{equation}\label{eq:ai_m_1}
	\binom{\a-i}{ k-i}\le \binom{\b-i}{ k-i}+\binom{\c-i}{ k-i-1},
	\end{equation}
	Moreover, if there is equality in \eqref{eq:ai} for $i= 1$ and $i=0$, 
	then, 
	\begin{itemize}
		\item for each   $0\le j\le i$,
		\begin{equation}\label{eq:a-j}
		\binom{\a-j}{ k-i}= \binom{\b-j}{ k-i}+\binom{\c-j}{ k-i}.
		\end{equation}
		\item if $\textbf{a}=(a_0,\ldots,a_t)$ with $a_i>a_{i+1}$ and $t<k-1$, then
		\begin{equation}\label{eq:conseq1}
		\binom{\a}{k}\doteq \binom{\b}{k} + \binom{\c}{k-1}
		\end{equation}
		In particular, the only choices for $\c$ and $\b$ are:
		\begin{equation}\label{eq:numerical_colex_uniq}
		\begin{cases}
		\c=(a_0-1,\ldots,a_{i-1}-1,a_{i+1},\ldots,a_t) &\text{ and } \b=(a_0-1,\ldots,a_{i-1}-1,a_{i})  \\
		\c=(a_0-1,\ldots,a_{i-1}-1) &\text{ and } \b=(a_0-1,\ldots,a_{i-1}-1,a_{i},a_{i+1},\ldots,a_t) 
		\end{cases}
		\end{equation}
		for each $i\in[0,t+1]$ such that $a_{i-1}-1>a_{i}$ (with no condition when $i=t+1$ or $i=0$).
	\end{itemize}
\end{corollary}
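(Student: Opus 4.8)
The plan is to prove Corollary~\ref{cor.1} by reducing everything to Lemma~\ref{lem:abck}, applied iteratively. The inequalities \eqref{eq:ai} and \eqref{eq:ai_m_1} should follow by induction on $i$. The base case $i=1$ is precisely \eqref{eq:abc1} and \eqref{eq:abc11} of Lemma~\ref{lem:abck} with $k'=k$, $k''=k-1$, once one checks that the hypotheses transfer: $\binom{\a}{k}\stackrel{b}{=}m$ gives a genuine $k$-binomial decomposition, $\b\ge_{\text{lex}}\a-1$ is \eqref{eq:cond1}, and the degree conditions $b_j\ge k-j-1$, $c_j\ge k-1-j-1$ are exactly what we assumed. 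A subtlety is that \eqref{eq:hip1} is an inequality, not an equality, whereas Lemma~\ref{lem:abck} assumes \eqref{eq:abc01} with equality; so first I would reduce to the equality case by enlarging $\c$ (or $\b$) to absorb the slack --- e.g.\ replace $\c$ by a longer strictly decreasing sequence $\c'$ with $\binom{\c'}{k-1}=\binom{\b}{k}+\binom{\c}{k-1}-\binom{\b}{k}$ so that equality holds, noting that any term one appends to a binomial-type sequence only increases $\binom{\c'}{k-i-1}$ relative to $\binom{\c}{k-i-1}$, hence it suffices to prove the statement in the equality case. For the inductive step, from $\binom{\a}{k-i+1}\le\binom{\b}{k-i+1}+\binom{\c}{k-i}$ I would want to apply Lemma~\ref{lem:abck} again with $k$ replaced by $k-i+1$; here the hypothesis $k',k''\ge k$ of the first part of Lemma~\ref{lem:abck} is what I'd use (the shifted $\b,\c$ still carry the same values, and the degree conditions only get easier), which yields \eqref{eq:ai} and \eqref{eq:ai_m_1} at level $i$.

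For the moreover part I would assume equality in \eqref{eq:ai} at $i=0$ and $i=1$ --- that is, $\binom{\a}{k}=\binom{\b}{k}+\binom{\c}{k-1}$ and $\binom{\a}{k-1}=\binom{\b}{k-1}+\binom{\c}{k-2}$ --- and trace equality through the inductive chain. The identity \eqref{eq:a-j} for $0\le j\le i$ should come out of the same Lemma~\ref{lem:abck} machinery: equality in \eqref{eq:abc1} forces equality in \eqref{eq:abc11} (the two inequalities in Lemma~\ref{lem:abck} are proved in tandem and the ``Pascal'' relation $\binom{\a}{k-1}=\binom{\a-1}{k-1}+\binom{\a-1}{k-2}$ --- summed termwise --- links them), so one propagates $\binom{\a-1}{k-1}=\binom{\b-1}{k-1}+\binom{\c-1}{k-2}$, and iterating both the ``$k-i$'' shift and the ``$\a-j$'' shift gives all of \eqref{eq:a-j}. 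This is essentially bookkeeping once the single-step equality propagation in Lemma~\ref{lem:abck} is in hand, and I would phrase it as a double induction on $(i,j)$ using the Pascal identity to move between $\binom{\cdot}{k-i}$ at shifts $j$ and $j+1$.

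The last bullet is the real content. Under the extra hypothesis that $\a=(a_0,\dots,a_t)$ has length $t+1<k$ (so the decomposition is ``short'' and the last term $\binom{a_t}{k-t}$ has $k-t\ge 2$), I want to show the decomposition $\binom{\a}{k}=\binom{\b}{k}+\binom{\c}{k-1}$ is actually an \emph{exact} splitting in the strong sense $\binom{\a}{k}\doteq\binom{\b}{k}+\binom{\c}{k-1}$ and then enumerate the possibilities \eqref{eq:numerical_colex_uniq}. The strategy: use the Kruskal--Katona / colex interpretation available from Section~\ref{sec:kk}, or argue purely numerically. Numerically, I would look at the largest index where $\b$ and $\a-1$ (resp.\ $\c$ and a shift of $\a$) differ and use the equality $\binom{\a}{k-1}=\binom{\b}{k-1}+\binom{\c}{k-2}$ together with $\binom{\a}{k}=\binom{\b}{k}+\binom{\c}{k-1}$ to pin down the sequences. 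The key observation is that with $\b\ge_{\text{lex}}\a-1$ and the cardinality constraint, $\b$ must agree with $\a-1$ on a prefix $a_0-1,\dots,a_{i-1}-1$ and then either stop and let $\c$ carry the tail, or continue with $a_i$ and let $\c$ pick up $a_{i+1},\dots,a_t$; the ``short'' hypothesis $t<k-1$ is what prevents a degenerate merging of terms (it guarantees $\binom{a_i}{k-i}$ with $k-i\ge 2$, so the splitting point is forced rather than smeared). I expect this case analysis --- showing these are the \emph{only} two families of solutions for each admissible splitting index $i$ --- to be the main obstacle, since it requires carefully ruling out ``mixed'' sequences where $\b$ or $\c$ deviates from $\a\pm 1$ in the interior; the uniqueness of $k$-binomial decompositions and a careful comparison of the two sides term by term (from the top down) should close it, but the argument is delicate precisely at the index $i$ where $a_{i-1}-1>a_i$ (strict gap) versus $a_{i-1}-1=a_i$ (no room to split), which is why the statement restricts to the former.
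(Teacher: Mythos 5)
Your reduction to the equality case is stated backwards, and your inductive step invokes the wrong part of Lemma~\ref{lem:abck}. Since \eqref{eq:hip1} says the right-hand side is at least $\binom{\a}{k}$, you must \emph{shrink} $\c$ (and only afterwards $\b$) to some $\c'$ with $\binom{\b}{k}+\binom{\c'}{k-1}=\binom{\a}{k}$; enlarging $\c$ only increases the slack, the formula you write for $\c'$ collapses to $\binom{\c'}{k-1}=\binom{\c}{k-1}$, and in any case proving the inequality for an \emph{enlarged} $\c'$ yields a right-hand side $\binom{\c'}{k-i-1}\ge\binom{\c}{k-i-1}$, i.e.\ a conclusion weaker than \eqref{eq:ai}, not stronger. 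More seriously, after one application of Lemma~\ref{lem:abck} you sit at $\binom{\a}{k-1}\le\binom{\b}{k-1}+\binom{\c}{k-2}$; relative to the new level $k_{\mathrm{new}}=k-1$ the upper indices are $(k_{\mathrm{new}},k_{\mathrm{new}}-1)$, so the hypothesis $k',k''\ge k$ of the first part of the lemma is never satisfied in the iteration. You are forced back into the $(k,k-1)$ case at every level, which requires (i) rewriting $\binom{\a}{k-1}$ as a genuine $(k-1)$-binomial decomposition $\tilde{\a}$ with the same value one level lower (this is exactly what Observation~\ref{obs:minus1} provides), and (ii) checking that the lex condition $\b\ge_{\text{lex}}\tilde{\a}-1$ survives the re-balancing — this is why the paper's procedure shrinks $\c$ before touching $\b$ and cycles through Claim~\ref{cl.3}, Lemma~\ref{lem:abck} and Observation~\ref{obs:minus1}. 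Your proposal addresses neither point, and they are the substance of the proof of \eqref{eq:ai} and \eqref{eq:ai_m_1}.

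For the ``moreover'' part, your idea of summing the Pascal recurrences and squeezing the two inequalities against the assumed equalities is essentially the paper's argument (the paper lifts to $\a+1,\b+1,\c+1$, applies \eqref{eq:ai_m_1} with $i=2$ together with the level-$(k-1)$ equality, and then bootstraps over the pairs $(i,j)$ to get \eqref{eq:a-j}), so that outline is acceptable. But the final bullet — \eqref{eq:conseq1} and the classification \eqref{eq:numerical_colex_uniq}, which is the genuinely new content and what Theorem~\ref{thm:ucolex} relies on — is left as a plan whose hard case analysis you explicitly do not carry out. In the paper this is an induction that runs the wall reduction (Corollary~\ref{cor:rec_const_fin}) on $\binom{\a}{k}\doteq\binom{\a'}{k}+\binom{a_r-1}{k-r-1}$, uses Observation~\ref{obs:conseq_of_proof} to force the pavement $C$ to be empty (a nonempty $C$ would make the $i=1$ inequality strict, contradicting the assumed equality), and then uses the shortness hypothesis $t<k-1$ to force the rubble $R$ to be empty: if $R\ne\emptyset$, then because $\a'$ has fewer than $k$ terms one gets $\binom{\a''}{k-1}>\binom{\a'}{k-1}$, again contradicting equality. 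Only then do translation invariance and the induction hypothesis deliver \eqref{eq:conseq1} and \eqref{eq:numerical_colex_uniq}. Your heuristic for the role of $t<k-1$ (``the splitting point is forced rather than smeared'') is not the actual mechanism, and your top-down comparison does not by itself exclude the mixed splittings you yourself flag as the main obstacle; as written, the proposal does not prove the last bullet.
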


\subsection{Proof of Lemma~\ref{lem:abc} from Lemma~\ref{lem:abck} and Corollary~\ref{cor.1}}

If $\a$ is a k-binomial decomposition of a strictly positive number, then to satisfy the inequality $\b\geq_{\text{lex}}\a-1$, it suffices to ask $\b$ to be, from among the strictly decreasing sequences of non-negative numbers, one for which $b_i\geq k-i-1$.

If $\b$ and $\c$ are strictly decreasing sequences of non-negative integers, then we let $\b'$ and $\c'$ be those elements for which $b_i\geq k-i-1$ and $c_i\geq k-i-1-1$, then $\b'$ and $\c'$ are also strictly decreasing sequences of non-negative integers, and we have
\[
\binom{\b}{k}=\binom{\b'}{k}, \binom{\c}{k-1}=\binom{\c'}{k-1}, \binom{\b}{k-1}=\binom{\b'}{k-1}, \binom{\c}{k-1-1}=\binom{\c'}{k-1-1}
\]
the last two equalities following by the non-negativity assumption (then $c_{k-1-1}\geq 0=k-(k-1-1)-1-1$ and $b_{k-1}\geq 0=k-(k-1)-1$, if they exists).
This shows why it is enough to show \eqref{eq:abc-i} when $i=1$ for those decreasing sequences satisfying $b_i\geq k-i-1$ and $c_i\geq k-i-1-1$.
In general, for $i\geq 1$, we have that
\begin{equation} \label{eq:cond_ench}
\binom{\b}{k-i}\leq \binom{\b'}{k-i}, \binom{\c}{k-i-1}\leq\binom{\c'}{k-i-1}
\end{equation}
and thus \eqref{eq:abc-i} follows from Corollary~\ref{cor.1}. Observe also that, in order to show \eqref{eq:abc-i} in the case of equality, we are also demanding the condition $b_i\geq k-i-1$ and $c_i\geq k-i-1-1$, as in Corollary~\ref{cor.1}; the reason being that, if there is a term with $0\leq b_i<k-i-1$ or $0\leq c_i<k-i-1-1$, then the there is an $i$ for which \eqref{eq:cond_ench} follows with an strict inequality, this, combined with Corollary~\ref{cor.1}, shows that, in order to demand equality in \eqref{eq:abc-i}, we should impose the condition $b_i\geq k-i-1$ and $c_i\geq k-i-1-1$ on the decreasing sequences of non-negative integers.


\subsection{General comments for Lemma~\ref{lem:abck} and Corollary~\ref{cor.1}} 

The condition $b_j\geq k'-j-1$ and $c_j\geq k''-j-1$ for all $j$ for which they exists on $\b$ and $\c$ can be removed to show \eqref{eq:abc1} and \eqref{eq:ai}, but is needed to show \eqref{eq:abc11} and \eqref{eq:a-j}.

The non-negativity  condition on the coefficients, and the fact that they are strictly decreasing, implies that, although there is no restriction on $\ell(\c)$ or $\ell(\b)$, if there exists an element with  $b_j'=k'-j-1$ (or resp. $c_j'=k''-j-1$), then $\ell(\b)\leq k'$ (or, resp. $\ell(\c)\leq k''$).

The condition $k-j-1\leq b_j$, the fact that they are strictly decreasing, and $b_j\geq 0$, also implies that if
$\binom{\b }{ k}>0$ and $\binom{\b' }{ k}\stackrel{b}{=}\binom{\b }{ k}$ is its $k$-binomial decomposition, then, for each $i\geq 0$
\[
\binom{\b'-i }{ k-i}\leq\binom {\b-i}{ k-i}
\]
with some inequality being strict only whenever there exists a $b_j$ with $k-j-1=b_j$, as then the term $\binom{b_j-(k-j)+1}{ k-j-(k-j)+1}$ turns from $0$ to something positive by increasing $i$ from $(k-j-1)$ to $(k-j)$.


%

%
%
%
%
%
%

\begin{observation} \label{obs:minus1}
	If $\b$ is a $k$-binomial decomposition, then, for every $i> 0$, if 
	\begin{itemize}
		\item $\b_i$ is defined as $\binom{\b_i}{ k-i}\stackrel{b}{=}\binom{\b}{ k-i}$ and 
		\item $\b_i'$ is defined as $\binom{\b_i'}{ k-i}\stackrel{b}{=}\binom{\b-i}{ k-i}$ 
	\end{itemize} then
	\[
	\binom{\b_i}{ k-i-1}=\binom{\b}{ k-i-1}, \qquad \binom{\b_i'-1}{ k-i-1}=\binom{\b-i-1}{ k-i-1}
	\]
\end{observation}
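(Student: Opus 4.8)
The plan is to reduce Observation~\ref{obs:minus1} to an explicit description of the two $(k-i)$-binomial decompositions $\b_i$ and $\b_i'$, after which each identity becomes a single hockey-stick sum. Write $r=k-i$ (we may assume $1\le i\le k-1$, so $r\ge 1$) and $\b=(b_0>b_1>\dots>b_s)$. Since $\b$ is a $k$-binomial decomposition we have $b_j\ge k-j>r-j$ for every $j$, so the coefficient constraint for an $r$-binomial decomposition is automatically (indeed strictly) met by $\b$ and by $\b-i$; the only obstruction to $\binom{\b}{r}$ already being the $r$-binomial decomposition is that $\ell(\b)=s+1$ may exceed $r$. If $\ell(\b)\le r$ this obstruction is absent: then $\b_i=\b$, $\b_i'=\b-i$, and both identities are immediate. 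So the real work is in the case $s\ge r$.

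Assume $s\ge r$. First I would pin down $\b_i$. The terms of $\binom{\b}{r}=\sum_j\binom{b_j}{r-j}$ with $j>r$ vanish and the $j=r$ term is $\binom{b_r}{0}=1$, so $\binom{\b}{r}=\sum_{j=0}^{r-1}\binom{b_j}{r-j}+1$. Let $p\in\{0,\dots,r-1\}$ be minimal such that $b_p,b_{p+1},\dots,b_{r-1}$ are consecutive integers (so $p=0$ or $b_{p-1}\ge b_p+2$). Folding that block together with the stray $+1$ via $\sum_{l=0}^{L}\binom{v-1+l}{l}=\binom{v+L}{L}$ (with $v=b_{r-1}$) yields $\binom{\b}{r}=\sum_{j=0}^{p-1}\binom{b_j}{r-j}+\binom{b_p+1}{r-p}$, and the sequence $\d:=(b_0,\dots,b_{p-1},b_p+1)$ is strictly decreasing (by the choice of $p$), non-negative, of length $p+1\le r$, with last coefficient $b_p+1>r-p$; hence it is a bona fide $r$-binomial decomposition, so $\b_i=\d$ by uniqueness. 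The consecutiveness pattern of the $b_j$ is translation invariant and $b_j-i\ge 0$ for $j\le r$, so the same $p$ works after subtracting $i$ and $\b_i'=(b_0-i,\dots,b_{p-1}-i,b_p-i+1)$.

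It remains to evaluate the $(k-i-1)$-shadows. For any strictly decreasing non-negative sequence, $\binom{\cdot}{r-1}$ is obtained by lowering every denominator by one; here the index-$(r-1)$ term contributes $\binom{b_{r-1}}{0}=1$ and later terms vanish, so $\binom{\b}{r-1}=\sum_{j=0}^{r-1}\binom{b_j}{r-1-j}$. Splitting at $p$ and folding the consecutive block via $\sum_{l=0}^{L}\binom{v+l}{l}=\binom{v+L+1}{L}$ gives $\binom{\b}{r-1}=\sum_{j=0}^{p-1}\binom{b_j}{r-1-j}+\binom{b_p+1}{r-1-p}=\binom{\b_i}{r-1}$, the first identity. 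Running the identical computation with every $b_j$ replaced by $b_j-i-1$ (all of these are $\ge 0$ in the relevant range, and the index-$(r-1)$ term is $\binom{b_{r-1}-i-1}{0}=1$ since $b_{r-1}\ge i+1$) gives $\binom{\b-i-1}{r-1}=\sum_{j=0}^{p-1}\binom{b_j-i-1}{r-1-j}+\binom{b_p-i}{r-1-p}=\binom{\b_i'-1}{r-1}$, the second. The step I expect to need the most care is the identification of $\b_i$ — i.e.\ exactly how the overflow of a too-long sequence is absorbed by a single carry into position $p$ — together with the boundary cases $p=r-1$ (no block to fold) and $r-1=0$; once $\b_i$ and $\b_i'$ are in hand, what is left is just the two hockey-stick sums. (For the first identity alone there is also a quick combinatorial route: the colex initial segment $I$ of size $\binom{\b}{r}$ satisfies $|\Delta I|=\binom{\b_i}{r-1}$ because initial segments are extremal (Theorem~\ref{thm:kk+fg}), while the block decomposition of $I$ induced by $\b$ gives $|\Delta I|=\binom{\b}{r-1}$ directly; but the algebraic computation above disposes of both identities at once, so I would use it.)
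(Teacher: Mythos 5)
Your proposal is correct and follows essentially the same route as the paper: both identify explicitly how the trailing term $\binom{b_{k-i}}{0}=1$ is absorbed by a single carry at the start of the maximal run of consecutive entries ending at position $k-i-1$ (your index $p$ is the paper's $s$), and then check the two identities on the resulting explicit sequences. The only difference is that the paper leaves the final verification as "one can then check," whereas you carry it out via the hockey-stick sums, which is a welcome completion rather than a different method.
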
 

\begin{proof}
	If there is no term $b_{k-i}$ in $\b$, then the result holds trivially as $\b_i=\b$ and $\b_i'=\b-i$. If there is a term $b_{k-i}$, then 
	\[
	{\b\choose k-i}=\sum_{j=1}^{k-i-1} {b_i \choose k-i-j}+{b_{k-i}\choose 0}=\sum_{j=1}^{k-i-1} {b_i \choose k-i-j}+1
	\]
	so let $s$ be the largest integer, $s\geq i+1$ for which $b_{k-s-1}-1=b_{k-s}$. Then
	\[
	(\b_i)_j=\begin{cases}
	b_j & \text{ if $s$ exists and $j< k-s-1$}\\
	b_{k-s-1}+1 &\text{ if $s$ exists}\\
	\text{no element} &\text{ if $s$ exists and $j> k-s-1$}\\
	b_j & \text{ if $s$ does not exist and $j\neq k-i-1$}\\
	b_{k-i-1}+1 & \text{ if $s$ does not exist and $j= k-i-1$}\\
	\end{cases}
	\]
	one can then check that the equality ${\b_i\choose k-i-1}={\b\choose k-i-1}$ holds (the point being that the index $s$, if it exists for one value, it will exists also for the next value of $i$, until it is exhausted, in which case, the other term $+1$ when $s$ does not exists takes over). The equality ${\b_i'-1\choose k-i-1}={\b-i-1\choose k-i-1}$ follows similarly.
\end{proof}

\begin{claim} \label{cl.3}
	In Lemma~\ref{lem:abck} and Corollary~\ref{cor.1} it suffices to show the statements whenever
	\begin{equation}\label{eq:assumption_lem}
	\text{$\b$ and $\c$ are $k'$ and $k''$-binomial decompositions, and $\b>_{\text{lex}}\a-1$ in the case $(k',k'')=(k,k-1)$.}
	\end{equation}
\end{claim}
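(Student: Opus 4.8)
The plan is to deduce the stated inequalities (and the accompanying equality statements) from the special case \eqref{eq:assumption_lem} by replacing $\b$ and $\c$ with their binomial normalizations. Given $\b,\c$ as in the hypotheses, I would let $\b^{\ast}$ be the $k'$-binomial decomposition of $\binom{\b}{k'}$ (the empty sequence if $\binom{\b}{k'}=0$) and $\c^{\ast}$ the $k''$-binomial decomposition of $\binom{\c}{k''}$. These are honest binomial decompositions, they are strictly decreasing with the required floor bounds, and $\binom{\b^{\ast}}{k'}=\binom{\b}{k'}$, $\binom{\c^{\ast}}{k''}=\binom{\c}{k''}$, so \eqref{eq:abc01} (resp.\ \eqref{eq:hip1}) is inherited by $\b^{\ast},\c^{\ast}$ verbatim.

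Next I would transfer the conclusions back from the starred sequences. Since $\b$ is obtained from $\b^{\ast}$ by appending floor terms $k'-j-1$, which contribute non-negatively to the lower binomials, the monotonicity already recorded in the comments preceding Observation~\ref{obs:minus1} gives, for every $i\ge 1$, $\binom{\b^{\ast}}{k'-i}\le\binom{\b}{k'-i}$ and $\binom{\b^{\ast}-i}{k'-i}\le\binom{\b-i}{k'-i}$, and likewise for $\c$. Hence the starred versions of \eqref{eq:abc1}, \eqref{eq:abc11}, \eqref{eq:ai}, \eqref{eq:ai_m_1} imply the unstarred ones, because their right-hand sides only increase when the stars are stripped off. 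For the ``moreover'' parts (the equalities \eqref{eq:a-j}, \eqref{eq:conseq1}, \eqref{eq:numerical_colex_uniq}) I would argue that a sequence with a coefficient equal to its floor value $k'-j-1$ contributes a strictly positive amount to $\binom{\cdot}{k'-1}$; thus equality in the conclusion forces $\b=\b^{\ast}$ and $\c=\c^{\ast}$, and the equality statement for $\b^{\ast},\c^{\ast}$ is then literally the one claimed.

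The only delicate point is the strictness of the lex hypothesis \eqref{eq:cond1} in the case $(k',k'')=(k,k-1)$: Lemma~\ref{lem:abck} assumes only $\b\ge_{\text{lex}}\a-1$, whereas \eqref{eq:assumption_lem} demands $\b>_{\text{lex}}\a-1$, and passing from $\b$ to its shorter prefix $\b^{\ast}$ can only lower the lex value. A short case analysis shows that $\b^{\ast}$ can fail to satisfy $\b^{\ast}>_{\text{lex}}\a-1$ only when $\b$ coincides with $\a-1$ up to trailing floor terms; in that situation $\binom{\b}{k}=\binom{\a-1}{k}$, hence $\binom{\c}{k-1}=\binom{\a}{k}-\binom{\a-1}{k}=\binom{\a-1}{k-1}$. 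In this residual case I would argue directly instead of by reduction: using $\binom{\b}{k-1}\ge\binom{\a-1}{k-1}$ and the Pascal identity $\binom{\a}{k-1}=\binom{\a-1}{k-1}+\binom{\a-1}{k-2}$, inequality \eqref{eq:abc1} reduces to $\binom{\a-1}{k-2}\le\binom{\c}{k-2}$; since $\a-1$ satisfies the strong floor $a_j-1\ge (k-1)-j$, a short induction in the spirit of Observation~\ref{obs:minus1} shows that $\binom{\cdot}{k-2}$ takes the same value on $\a-1$ and on the $(k-1)$-binomial decomposition $\c^{\ast}$ of $\binom{\c}{k-1}$, whence $\binom{\c}{k-2}\ge\binom{\c^{\ast}}{k-2}=\binom{\a-1}{k-2}$. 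Inequality \eqref{eq:abc11} and the corresponding clauses of Corollary~\ref{cor.1} are treated the same way after a global shift by $1$ (and, for \eqref{eq:hip1}, using that the shadow-size function is non-decreasing). The main obstacle I anticipate is precisely this last step: identifying exactly when the normalization breaks strictness of the lex condition, and then disposing of that boundary case by hand. Everything else is a mechanical application of the monotonicity already in place.
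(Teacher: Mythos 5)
Your proposal is correct and follows essentially the same route as the paper: your normalizations $\b^{\ast},\c^{\ast}$ coincide with the paper's $\b',\c'$ (the subsequences of terms strictly above the floor), the inequalities transfer by the same monotonicity of the lower binomial sums, equality is ruled out for non-decompositions because each floor term contributes $+1$ at the $i=1$ level, and the boundary case where $\b$ equals $\a-1$ up to trailing floor terms is disposed of exactly as in the paper, via the Pascal recurrence together with Observation~\ref{obs:minus1}. No substantive gap to report.
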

\begin{proof}[Proof of Claim~\ref{cl.3}]
If $\b$ and $\c$ are strictly decreasing sequences of non-negative integers, then we let $\b'$ and $\c'$ be those elements for which $b_i\geq k'-i$ and $c_i\geq k'-i$, then $\b'$ and $\c'$ are also strictly decreasing sequences of non-negative and are $k'$ and $k''$ binomial decompositions respectively (perhaps the empty sequence). Then we have
\begin{align*}
\binom{\b}{k'}&=\binom{\b'}{k'}, \binom{\c}{k''}=\binom{\c'}{k''}, \text{ and thus} \nonumber \\
\binom{\a}{k}&=\binom{\b'}{k'}+\binom{\c'}{k''},
\end{align*}
and also
\begin{align*}
\binom{\b}{k'-i}&\geq \binom{\b'}{k'-i}, \binom{\b-i}{k'-i}\geq \binom{\b'-i}{k'-i}\nonumber \\
\binom{\c}{k''-i}&\geq \binom{\c'}{k''-i}, \binom{\c-i}{k''-i}\geq \binom{\c'-i}{k''-i}\nonumber 
\end{align*}
therefore, if the inequalities \eqref{eq:abc1},\eqref{eq:abc11}, \eqref{eq:ai}, \eqref{eq:ai_m_1} can be shown for $\b'$, $\c'$, then they also hold for $\b,\c$. Further, if $\b'\neq \b'$ and $\c'\neq \c$, then the inequality for $i=1$ is strict, and thus if we show the result with equality for binomial decompositions, then it does not hold for sequences not corresponding to non-binomial decompositions.

Let us now assume that $\b=_{\text{lex}} \a-1$.
Then, as $\a$ is a $k$-binomial decomposition, all the terms of $\a-1$ are $a_i-1\geq k-i-1$. Furthermore, the equalities:
${\a \choose k}= {\a-1 \choose k}+{\a-1 \choose k-1}$,
${\a \choose k}= {\b \choose k}+{\c \choose k-1}$, and $\b=_{\text{lex}} \a-1$, implies that ${\a-1 \choose k}={\b \choose k}$ and thus ${\a-1 \choose k-1}={\c \choose k-1}$. As $\a$ is a $k$ binomial decomposition, $\a-1$ is a $k-1$-binomial decomposition (perhaps with the exception that ${\a-1 \choose k-1}$ has a term of the type ${a_{k-1}-1\choose 0}$); however, it is not hard to see that the condition ${\a-1 \choose k-1}={\c \choose k-1}$ implies that
${\a-1 \choose k-1}={\c' \choose k-1}$ (as before, $\c'$ is obtained from $\c$ by removing the terms with $k-i-1-1=c_i$, then all these terms in $\c$ contribute as $0$ contribute as $0$ in the binomial expression ${\c \choose k-1}$) and that actually we can let ${\a-1 \choose k-1}\stackrel{b}{=}{\c' \choose k-1}$, which also implies that ${\a-1 \choose k-1-1}={\c' \choose k-1-1}$ (see Observation~\ref{obs:minus1}); now the fact that $\b=_{\text{lex}} \a-1$ implies that ${\a-1 \choose k-1}={\b \choose k-1}$ which, together with ${\a \choose k-1}\doteq {\a-1 \choose k-1}+{\a-1 \choose k-1-1}$ shows \eqref{eq:abc1} for the case $\b=_{\text{lex}} \a-1$.
The arguments for \eqref{eq:abc11}, \eqref{eq:ai} and \eqref{eq:ai_m_1} follows similarly.

Finally, observe that if $\b>_{\text{lex}} \a-1$ due to an coefficient of the type $\binom{i-1}{i}$ (so $\b'=_{\text{lex}} \a-1$ by removing some binomial coefficients of the type $\binom{i-1}{i}$ from $\b$ to form $\b'$), and letting $\c'$ obtained by removing those elements with $c_i<k-1-i$, then the above argument shows that $\binom{\a}{k}=\binom{\b'}{k}+\binom{\c'}{k-1}$ yet
$\binom{\a}{k-1}<\binom{\b'}{k-1}+\binom{\c'}{k-1-1}$ as $\binom{\a}{k-1}=\binom{\b}{k-1}+\binom{\c'}{k-1-1}$ and $\binom{\b}{k-1}<\binom{\b'}{k-1}$.
Also, if $\b'$ (obtained by removing those $b_i=k-i-1$ from $\b$) is such that $\b'<_{\text{lex}} \a-1$, yet $\b\geq_{\text{lex}} \a-1$, then there exists a $\b''$ obtained from $\b$ by removing some (and not all) elements with $b_i=k-i-1$ so that $\b''=_{\text{lex}} \a-1$. Then the argument discussed above shows that equality holds for $\b''=_{\text{lex}} \a-1$ and $\c'$ throughout, and strict inequality follows for $\b$ (if some element has been removed), and strict inequality in the other direction for $\b'$ in \eqref{eq:abc1} and \eqref{eq:ai}. The cases  \eqref{eq:abc11} and \eqref{eq:ai_m_1} follow with strict inequalities depending on which elements have been removed from $\b$ to form $\b''$.
 This completes the proof of the claim.
\end{proof}

\textbf{Case $\b=_{\text{lex}} \a-1$.} Then, as $\a$ is a $k$-binomial decomposition, all the terms of $\a-1$ are $a_i-1\geq k-i-1$. Furthermore, the equalities:
${\a \choose k}\doteq {\a-1 \choose k}+{\a-1 \choose k-1}$,
${\a \choose k}\doteq {\b \choose k}+{\c \choose k-1}$, and $\b=_{\text{lex}} \a-1$, implies that ${\a-1 \choose k}={\b \choose k}$ and thus ${\a-1 \choose k-1}={\c \choose k-1}$. As $\a$ is a $k$ binomial decomposition, $\a-1$ is a $k-1$-binomial decomposition (perhaps with the exception that ${\a-1 \choose k-1}$ has a term of the type ${a_{k-1}-1\choose 0}$); however, it is not hard to see that the condition ${\a-1 \choose k-1}={\c \choose k-1}$ implies that
${\a-1 \choose k-1}={\c' \choose k-1}$ (all the terms in $\c$ not in $\c'$ contribute as $0$ in the binomial expression) and that actually ${\a-1 \choose k-1}\stackrel{b}{=}{\c' \choose k-1}$, which also implies that ${\a-1 \choose k-1-1}={\c' \choose k-1-1}$ (see Observation~\ref{obs:minus1}); now the fact that $\b=_{\text{lex}} \a-1$ implies that ${\a-1 \choose k-1}={\b \choose k-1}$ which, together with ${\a \choose k-1}\doteq {\a-1 \choose k-1}+{\a-1 \choose k-1-1}$ shows \eqref{eq:abc1} for the case $\b=_{\text{lex}} \a-1$.

\textbf{Case $\b>_{\text{lex}} \a-1$.}
If the inequality $\b>_{\text{lex}} \a-1$ follows from an inequality $b_i'>_{\text{lex}} a_i-1$, then $b_j'=b_j> k-j-1$ for $j\in[0,i]$ (as $\a$ is a $k$-binomial decomposition), which means that $\b'>_{\text{lex}} \a-1$, and we may argue with the pair $(\b',\c')$ instead of $(\b,\c)$; if the inequality $\b>_{\text{lex}} \a-1$ occurs only by the existence of a $b_i$ for which there is no corresponding $a_i$ (as it does not exists), and yet $\b'<_{\text{lex}} \a-1$, then we have $b_i=a_i-1$ for those $i$'s for which $a_i$ exists; letting $\b''=_{\text{lex}}\a-1$ we have $\b''<_{\text{lex}} \b$ which implies (by the non-negativity condition on the coefficients) 
${\b''\choose k-1}\leq {\b\choose k-1}$, also that
${\a \choose k}={\b''\choose k}+{\c'\choose k-1}$ and hence if we can show the result for $(\b'',\c')$ then the result also holds for $(\b,\c)$. We can apply the argument for the preceeding paragraph to show that, $\b''=_{\text{lex}}\a-1$, then \eqref{eq:abc1} holds for the pair $(\b'',\c')$, and thus \eqref{eq:abc1} holds for this case.



We give a sketch of the proof of Lemma \ref{lem:abck} in Section~\ref{subsec:sketch} below.
We first discuss the notion of translation invariant identities of binomial sums which is used in the proof and also allows us to identify the cases of equality in \eqref{eq:abc1} in relevant cases. The proof of the remaining cases of \eqref{eq:abc-i} in Lemma \ref{lem:abc} (including the ``moreover'' part) is given in Section~\ref{sec:proof_equality} as Corollary~\ref{cor.1}.

\subsection{Translation invariance}

We say that an identity $\sum_{i}\alpha_i{x_i\choose y_i}=\sum_j \alpha_j'{x'_j\choose y'_j}$ of two  sums of binomial coefficients is {\it translation invariant} (or simply invariant) if, for every pair of integers $r,s$ we have
$$
\sum_{i}\alpha_i{x_i+r\choose y_i+s}=\sum_j \alpha_{j}'{x'_j+r\choose y'_j+s}.
$$
We use the notation $\doteq$ as in  
$
\sum_{i}\alpha_i{x_i\choose y_i}\doteq \sum_j \alpha_j'{x'_j\choose y'_j}
$
to indicate that the identity is translation invariant.

The \emph{binomial recurrence}
\begin{equation}\label{eq.bin_rec}
{n\choose k}\doteq {n-1\choose k}+{n-1\choose k-1}\end{equation} is an example of an invariant identity. By applying it term by term, the  binomial recurrence  naturally translates to binomial sequences.

\begin{lemma} For a sequence $\ba$ and integer $k$ we have
	$$
	{\ba\choose k}\doteq {\ba-1\choose k}+{\ba -1\choose k-1}.
	$$
	\end{lemma}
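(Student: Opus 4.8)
The statement to prove is that the binomial recurrence lifts to binomial sequences, namely that for a sequence $\ba=(a_0,\ldots,a_t)$ and integer $k$ one has the translation-invariant identity $\binom{\ba}{k}\doteq\binom{\ba-1}{k}+\binom{\ba-1}{k-1}$.

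The plan is to unfold both sides according to the definitions introduced earlier in the paper and apply the scalar binomial recurrence \eqref{eq.bin_rec} termwise. Recall that $\binom{\ba}{k}=\sum_{j=0}^{t}\binom{a_j}{k-j}$, that $\binom{\ba-1}{k}=\sum_{j=0}^{t}\binom{a_j-1}{k-j}$, and that $\binom{\ba-1}{k-1}=\sum_{j=0}^{t}\binom{a_j-1}{k-1-j}$. First I would write, for each fixed index $j\in\{0,\ldots,t\}$, the instance of the binomial recurrence $\binom{a_j}{k-j}\doteq\binom{a_j-1}{k-j}+\binom{(a_j-1)}{(k-j)-1}$, which is simply \eqref{eq.bin_rec} with $n=a_j$ and the lower parameter $k-j$; this is translation invariant in the pair $(a_j,k-j)$, hence in particular under the simultaneous shift $(a_j,k)\mapsto(a_j+r,k+s)$ that also shifts all the other summands consistently. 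Then I would sum these $t+1$ identities over $j$: the left-hand sides sum to $\binom{\ba}{k}$, the first terms on the right sum to $\binom{\ba-1}{k}$, and the second terms sum to $\binom{\ba-1}{k-1}$, since $(k-j)-1=(k-1)-j$. Because a sum of translation-invariant identities is again translation invariant (each summand transforms correctly under the common shift $(r,s)$), the resulting identity carries the $\doteq$ symbol, which is exactly the claim.

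The only point requiring a word of care is the convention on binomial coefficients and the empty sequence: if $\ba$ is the empty sequence then both sides are $0$ by the stated convention and the identity holds trivially and invariantly; and the conventions $\binom{n}{k}=0$ for $k<0$ together with $\binom{0}{0}=1$ are precisely the ones under which \eqref{eq.bin_rec} holds as a translation-invariant identity for all integer arguments, so no boundary case breaks the termwise application. I do not anticipate a genuine obstacle here — the content is entirely that translation invariance is preserved under finite sums and index shifts, so the proof is a one-line computation once the definitions are unpacked. The mild subtlety worth flagging explicitly is that the shift $(r,s)$ acts on $\binom{\ba}{k}$ by sending $\ba\mapsto\ba+r$ (adding $r$ to every coordinate) and $k\mapsto k+s$, and under this action the $j$-th summand $\binom{a_j}{k-j}$ becomes $\binom{a_j+r}{(k+s)-j}=\binom{a_j+r}{(k-j)+s}$, so it is exactly the $(r,s)$-shift of that scalar term; hence the termwise invariant identities can legitimately be added.

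In full: by \eqref{eq.bin_rec} applied with $n=a_j$ and lower index $k-j$, for each $j$ we have the invariant identity
\[
\binom{a_j}{k-j}\doteq\binom{a_j-1}{k-j}+\binom{a_j-1}{k-j-1}.
\]
Summing over $j=0,\ldots,t$ and using $\binom{\ba}{k}=\sum_{j=0}^t\binom{a_j}{k-j}$, $\binom{\ba-1}{k}=\sum_{j=0}^t\binom{a_j-1}{k-j}$, and $\binom{\ba-1}{k-1}=\sum_{j=0}^t\binom{a_j-1}{(k-1)-j}$, together with the fact that a finite sum of translation-invariant identities is translation invariant, yields
\[
\binom{\ba}{k}\doteq\binom{\ba-1}{k}+\binom{\ba-1}{k-1},
\]
as desired.
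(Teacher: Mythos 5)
Your proof is correct and follows exactly the route the paper intends: the lemma is obtained by applying the binomial recurrence \eqref{eq.bin_rec} term by term to the summands $\binom{a_j}{k-j}$ and summing, noting that a finite sum of translation-invariant identities is again translation invariant. Your explicit verification of how the shift $(r,s)$ acts on each summand is a careful spelling-out of what the paper leaves implicit, but it is the same argument.
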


In particular, repeated application of the binomial recurrence to a binomial sum $\sum_{i}\alpha_i{x_i\choose y_i}$ gives rise to an invariant  identity. For example, for $i\ge 1$, the two usual binomial identities, that will be used in the proof of Lemma \ref{lem:abck}, 
\begin{align}
{n\choose k}&\doteq {n-1\choose k}+{n-2\choose k-1}+\cdots +{n-i\choose k-i+1}+{n-i\choose k-i}, \label{eq:diagonal}\\
{n\choose k}&\doteq {n-1\choose k-1}+{n-2\choose k-1}+\cdots +{n-i\choose k-1}+{n-i\choose k}\label{eq:vertical},
\end{align}
are translation invariant. 

We remark that a translation invariant identity may involve ``hidden'' binomial coefficients. For example, the identity
$$
{1\choose 0}={0\choose 0}+{0\choose -1},
$$
is translation invariant, while the identity
$$
{1\choose 0}={0\choose 0},
$$
is not.

The following Proposition shows that, in fact, all translation invariant identities arise from repeated application of the binomial recurrence \eqref{eq.bin_rec}.

\begin{proposition}[Translation invariant characterization] \label{prop:char_ti}
	If $$
	A:=\sum_{i}\alpha_i{x_i \choose y_i}\doteq\sum_j \alpha_j'{x'_j\choose y'_j}=:B
	$$
	for some real numbers $\alpha_i, \alpha_j'$, and integers $x_i$, $x_i'$, $y_i'$ and $y_i$  and both $A$ and $B$ being finite sums, then
	there exists a 
	$C=\sum_{i}\beta_i{x_i\choose y_i}$ such that
	$A\doteq C$, $B\doteq C$, and $C$ is obtained from $A$ and from $B$ by repeated application of the binomial recurrence and cancellation of identical terms.
\end{proposition}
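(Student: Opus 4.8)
The plan is to reduce the statement to a linear-algebra fact about polynomials. First I would fix the ambient set of ``coordinates'': let $P$ be the set of all pairs $(x,y)$ of integers appearing as $(x_i,y_i)$ or $(x_j',y_j')$, together with all pairs obtained from them by finitely many applications of the binomial recurrence downward (this is an infinite but well-structured set, contained in $\{(x,y): y\le \max_i y_i\}$). For a shift $(r,s)\in\Z^2$, evaluation of a binomial sum at the shifted coordinates gives a map. The key observation is that the hypothesis $A\doteq B$ says precisely that the function $(r,s)\mapsto \sum_i\alpha_i\binom{x_i+r}{y_i+s}-\sum_j\alpha_j'\binom{x_j'+r}{y_j'+s}$ is identically zero on $\Z^2$. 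I want to conclude that the formal difference $A-B$, viewed as an element of the free vector space on symbols $\binom{x}{y}$ modulo the subspace generated by all instances of the binomial recurrence $\binom{x}{y}-\binom{x-1}{y}-\binom{x-1}{y-1}$, is zero; taking $C$ to be any common representative then finishes the proof (the ``cancellation of identical terms'' is just reduction in that quotient).

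The main step is therefore: the functions $(r,s)\mapsto\binom{x+r}{y+s}$, as $(x,y)$ ranges over a set of representatives of the quotient described above, are linearly independent over $\R$ as functions on $\Z^2$ (or on a suitably large finite grid). Concretely I would argue as follows. Using \eqref{eq:diagonal} or \eqref{eq:vertical} repeatedly, every symbol $\binom{x}{y}$ with $y\ge 1$ can be rewritten modulo the binomial recurrence as an integer combination of symbols $\binom{x'}{0}$ and $\binom{x'}{y'}$ with $y'<0$; and $\binom{x'}{0}=1$ while $\binom{x'}{y'}=0$ for $y'<0$ as \emph{functions} — but crucially the recurrence only lets us replace, not evaluate, so the right normal form to pick is: push everything down to row $y=0$ and row $y=-1$ (mirroring the remark about the hidden $\binom{0}{-1}$ in the excerpt). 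In that normal form a binomial sum becomes $\sum_x \beta_x\binom{x}{0}+\sum_x\gamma_x\binom{x}{-1}$, and after a shift by $(r,s)$ with $s$ large and negative all the $\binom{x+r}{s}$ terms vanish identically while for $s=0$ they are all $1$; playing off different values of $s$ and using that $\binom{x+r}{0}=1$ for all $r$ forces, by a finite-difference / Vandermonde argument in the variable $r$ after one more application of the recurrence to separate the $\binom{\cdot}{0}$ terms from each other, that all $\beta_x$ and $\gamma_x$ vanish. Hence the normal form of $A-B$ is zero, which is exactly the assertion that $A$ and $B$ reduce to a common $C$.

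I expect the hard part to be the bookkeeping in that last independence argument: one must be careful that the ``hidden'' coefficients in negative rows are genuinely present (so that the quotient is taken by \emph{all} instances of the recurrence, not a truncated version), and one must verify that after normalizing to rows $0$ and $-1$ the only linear relations among the surviving functions $\{(r,s)\mapsto\binom{x+r}{s}\}_{x}$ are the trivial ones — this is where a clean choice of evaluation points ($s\in\{0,-1,-2,\dots\}$ to kill rows, then finite differencing in $r$ to separate columns) does the work. A secondary point worth stating carefully is that all sums are finite, so only finitely many symbols occur and the whole argument takes place in a finite-dimensional quotient, which legitimizes the linear-algebra language and guarantees that $C$ is again a finite sum of the original symbols $\binom{x_i}{y_i}$ (after cancellation). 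Once the independence lemma is in hand, the proposition is immediate: $A\doteq B$ $\Rightarrow$ $A-B$ lies in the span of recurrence instances $\Rightarrow$ $A$ and $B$ have the same normal form $C$, reached from each by applying \eqref{eq.bin_rec} and cancelling.
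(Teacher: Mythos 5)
The key normalization step in your plan fails. You claim that, modulo the binomial recurrence, every symbol $\binom{x}{y}$ with $y\ge 1$ can be rewritten as an integer combination of symbols in the rows $y=0$ and $y=-1$. This is not possible: each application of \eqref{eq.bin_rec} replaces $\binom{x}{y}$ by $\binom{x-1}{y}+\binom{x-1}{y-1}$, which always leaves a term in the same row $y$ (only the numerator drops), and the same is true of \eqref{eq:diagonal} and \eqref{eq:vertical}, whose right-hand sides each retain a term with lower index $k$. The obstruction is not merely procedural but holds in the quotient space you set up: the linear functional $\phi$ defined on symbols by $\phi\bigl(\binom{x}{y}\bigr)=\binom{x}{y-1}$ (actual values, with the convention $\binom{m}{k}=0$ for $k<0$) vanishes on every instance $\binom{x}{y}-\binom{x-1}{y}-\binom{x-1}{y-1}$ of the recurrence (Pascal's rule for the shifted entry) and on every symbol with $y\le 0$, yet $\phi\bigl(\binom{x}{1}\bigr)=\binom{x}{0}=1$. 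Hence $\binom{x}{1}$ is not congruent, modulo the subspace generated by the recurrence instances, to any combination of symbols with $y\le 0$, so the normal form on which your independence/Vandermonde argument rests does not exist. Terms in the top row of a given sum can only disappear by cancelling against other top-row terms, and orchestrating those cancellations is exactly the content of the proposition, not something one may assume via a normal form.

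The repair is to normalize in the other direction, which is what the paper does: every application of \eqref{eq.bin_rec} lowers a numerator by exactly one, so any finite sum can be rewritten in finitely many steps so that all terms share the minimal numerator $j_m$ originally present, giving $A-B\doteq A'=\sum_t\alpha'_t\binom{j_m}{t}$. At that point your evaluation idea works directly and needs no Vandermonde step: translating the lower index by $-t_0$, where $t_0$ is the largest $t$ with $\alpha'_t\ne 0$, kills every term with $t<t_0$ (negative lower index) and leaves $\alpha'_{t_0}\binom{j_m}{0}=\alpha'_{t_0}=0$; iterating over the remaining indices shows all coefficients of $A'$ vanish, i.e.\ $A-B$ reduces to $0$ by the recurrence and cancellation, and a common $C$ exists. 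Your overall frame (quotient by the relation subspace, then linear independence certified by evaluation at shifts) is sound; the error lies solely in the chosen spanning set for the quotient, but that choice is where the actual work of the proof sits.
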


\begin{proof}[Proof of Proposition~\ref{prop:char_ti}]
	By moving $B$ to the left hand side, the statement is equivalent to show $A\doteq 0$ if and only if we can transform $A$ into $0$ by a sequence of applications of the binomial recurrence  ${n\choose k}= {n-1\choose k-1}+{n-1 \choose k},$ and cancellation of identical terms; both these operations are translation invariant.
	For convenience, let us change the index of summation in $A$ to write
	\[
	A=\sum_{t\in \Z} \sum_{j\in \Z} \alpha_{j,t} {j \choose t}\doteq 0,
	\]
Let $j_0$ and $j_m$ be respectively  the maximal and minimal  values of $j$ for which $\alpha_{j,t}\neq 0$. By repeated use of \eqref{eq:diagonal} or \eqref{eq:vertical}, we can reduce the binomial numerators and find $A'\doteq A$ with
\[
A'=\sum_{t} \alpha_{j_m,t}' {j_m \choose t}
\]
Now we claim that $\alpha_{j_m,t}'=0$ for each $t$. Indeed, assume that is not the case and let $t_0$ and $t_m$ be respectively maximal and minimal index $t$ for which $\alpha_{j_m,t}'$ is nonzero. Then, by translation invariance,
	$$
0=\sum_{t_m\le t\le t_0} \alpha_{j_m,t} {j_m \choose t-t_0}=\alpha_{j_m,t_0} {j_m \choose 0}
$$
and since ${j_m \choose 0}=1$ then $\alpha_{j_m,t_0}=0$. Then the same reasoning applies for all indices between $t_m$ and $t_0$ by an appropriate shifting of the binomial denominators.

\end{proof}

\subsection{A geometric representation}\label{sec:geom_repr}

The following geometric representation of binomial coefficients will be handy for the developments in this section. A binomial coefficient ${y\choose x}$ is identified with a point $(x,y)$ in the two--dimensional integer lattice $\Z^2$. By a diagonal in this lattice we mean a line $y=x+a$ for some $a$ and we say that ${y\choose x}$ lies on the diagonal $y-x$. Figure \ref{fig:ex1} illustrates the geometric representation of the two invariant equalities \eqref{eq:diagonal} and \eqref{eq:vertical}. Note that each $k$-binomial decomposition gives at most $k$ points on the positive quarter plane, strictly between  the vertical line $x=0$ and the diagonal $y=x-1$, which can be seen as the \emph{boundary} of the cone of binomial decompositions. 

\begin{figure}[h]
	\begin{center}
		\begin{tikzpicture}[scale=0.5]
		\draw[lightgray]  (0,0) grid (7,7);
		\draw[fill] (6,6) circle (2pt);
		\foreach \i in {(6,5),(5,4),(4,3),(3,2),(2,2)}
		{
			\draw[fill,white] \i circle (2pt);
			\draw \i circle (2pt);
		}
		\end{tikzpicture}
		\hspace{5mm}
		\begin{tikzpicture}[scale=0.5]
		\draw[lightgray]  (0,0) grid (7,7);
		\draw[fill] (6,6) circle (2pt);
		\foreach \i in {(5,5),(5,4),(5,3),(5,2),(6,2)}
		{
			\draw[fill,white] \i circle (2pt);
			\draw \i circle (2pt);
		}
		\end{tikzpicture}
	\end{center}
	\caption{The equality of the binomial coefficient represented by the  black point and  the sum of  binomial coefficients represented by white points is invariant by translations.}\label{fig:ex1}
\end{figure}
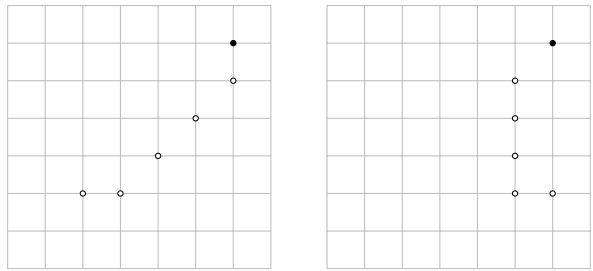

\subsection{Walls, rubble, and pavement}

A {\it wall} is a binomial sum which has precisely one binomial coefficient in each \emph{diagonal} of a set of nonnegative consecutive  diagonals (the binomial coefficient $\binom{n}{k}$ belongs to the diagonal $n-k$, see Section~\ref{sec:geom_repr}). For example, the right--hand side of \eqref{eq:vertical} excluding the last binomial coefficient is a wall.
Let ${\ell}$ be a non-negative integer and let $\w=(w_0,w_1,\ldots ,w_h)$ be a nonincreasing sequence of integers with $h\le \ell$. The {\it wall} $(\w,\ell)$ is the binomial sum
$$
(\w,\ell)=\sum_{i=0}^h {w_i+\ell-i\choose w_i},
$$ 
which has one binomial coefficient in each of the diagonals $\ell-h,\ell-h+1,\cdots, \ell$. The walls have the lex ordering between them, but this identification between a wall and its sum of binomial coefficients  allows for a natural comparaison between the walls and binomial expressions. (A $k$-binomial expression is a sum of binomial coefficients and it is not necessarily a $k$-binomial decomposition.)

We say that the wall $(\w,\ell)=\sum_{i=0}^h {w_i+\ell-i\choose w_i}$ {\it dominates} a $k$--binomial expression ${\b \choose k}=\sum_{i=0}^t{b_i\choose k-i}$ if the following three conditions hold:
\begin{enumerate}[label=(D\arabic*)]
\item \label{en:w1}  
$b_0-k\le \ell$
\item \label{en:w2} 
if $w_i=k-j\ge k-t$ then  $b_j< w_i+\ell-i$
\item \label{en:w3} $w_0\leq k$
\end{enumerate}
We write ${\b\choose k}\preceq (\w,\ell)$ to denote that $(\w,\ell)$ dominates ${\b \choose k}$. In this case, by \ref{en:w1}, no term  in the binomial expression lies in a diagonal higher than the first term in $(\w,\ell)$  and, by \ref{en:w2}, if there is a term in $(\w,\ell)$ in the same vertical line than a term in ${\b\choose k}$ then the latter lies on a strictly lower diagonal  than the former. Furthermore, by \ref{en:w3} the first column of the wall is more to the left (see Section~\ref{sec:geom_repr}) than the first column of the $k$-binomial expression. Overall, if a wall cominates a binomial expression, then the wall is to the top left of the binomial expression, in the geometric representation following Section~\ref{sec:geom_repr}.

A binomial sum of the form $R=\sum_{i\in I} \alpha_i{x_i\choose 0}$, where the binomial denominator of all the binomial coefficients are $0$, is called \emph{rubble}.

A binomial sum of the form $C=\sum_{i\in I} \alpha_i{i-1\choose i}$, where the binomial denominator of all the binomial coefficients is one more than the binomial numerator, is said to be a \emph{pavement}.

Given two sequences $\b$ and $\c$ of length $\leq t$, we let
\begin{align*}
\max\{\b,\c\}&=(\max\{b_0,c_0\},\ldots,\max\{b_t,c_t\}) \nonumber \\ \min\{\b,\c\}&=(\min\{b_0,c_0\},\ldots,\min\{b_{t'},c_{t'}\}) \nonumber 
\end{align*}
 where $t'$ is the minimal index for which both and $b_{t'}$ and $c_{t'}$ exist.
\subsection{Sketch of the proof of Lemma \ref{lem:abck} } \label{subsec:sketch}

The proof of Lemma \ref{lem:abck}, \eqref{eq:abc1} and \eqref{eq:abc11},  is by induction on $k$ and $m$. Let us sketch the case for  \eqref{eq:abc1} (\eqref{eq:abc11} is done similarly).
\begin{description}
	\item[Step 1:] The base cases for the induction are for $k=1$ and all $m$, and for $m\le k$ and all $k$.
	\item[Step 2:] The induction step assumes $m>k>1$. Then we argue that 
	$\ell(\b)\leq k'$ and $\ell(\c)\leq k''$ and that they are binomial 
	decompositions can be assumed by the nature of the statements 
	(see Claim~\ref{cl.3}). 
	\item[Step 3 (key step):] Then we find
 a nonzero binomial sum $S$, a rubble $R$, and a pavement $C$ where each term has value zero (each term ${i-1\choose i}$ satisfies $0<i$), such that the following translation invariant identities hold:
	\begin{align}
	{\a\choose k}&\doteq {\a'\choose k}+S+R\label{eq:asr}\\
	{\b\choose k'}+{\c\choose k''}&\doteq {\b'\choose k'}+{\c'\choose k''}+S\label{eq:bcs}+C,
	\end{align}
	for some $k$-binomial decomposition $\a'<_{lex} \a$, and $\b'\le_{lex} \b$ and $\c'\le_{lex} \c$ where $\b'$ and $\c'$ are $k'$ and $k''$ binomial decompositions (perhaps being the empty ones).
	\item[Step 4 (small touches):] Then using that $C$ is a binomial sum representing the zero, we obtain the equality (removing  $S$ in both sides)
	\begin{equation}\label{eq:rubble}
	 {\a'\choose k}+R={\b'\choose k'}+{\c'\choose k''},
	\end{equation}
	and, as $R\geq0$, we let
	\begin{equation} \label{eq:where_to_apply_ind}
	{\a'\choose k}={\b''\choose k'}+{\c''\choose k''}
	\end{equation}
	with $\b''\leq_{\text{lex}} \b'$ and $\c''\leq_{\text{lex}} \c'$ (prioritizing reducing $\c'$ before $\b'$ so that $\b''\geq_{\text{lex}}\a'-1$ if such condition is required).
	\item[Step 5 (induction):] If $S\neq \emptyset$, then ${\b''\choose k'}+{\c''\choose k''}\leq {\b'\choose k'}+{\c'\choose k''}<{\b\choose k'}+{\c\choose k''}$. If $S=\emptyset$ then $R\neq \emptyset$ and ${\b''\choose k'}+{\c''\choose k''}<{\b'\choose k'}+{\c'\choose k''}={\b\choose k'}+{\c\choose k''}$. In both cases, we use the induction hypothesis on \eqref{eq:where_to_apply_ind} to conclude
	\begin{equation}\label{eq:abck-1}
	{\a'\choose k-1}\le {\b''\choose k'-1}+{\c''\choose k''-1},
	\end{equation}
\item[Step 6 (concluding argument):]	By using the translation invariance of \eqref{eq:asr} and \eqref{eq:bcs} we obtain
	\begin{align*}
	{\a\choose k-1}&\doteq{\a'\choose k-1}+S'+R'\\
	{\b\choose k'-1}+{\c\choose k''-1}&\doteq{\b'\choose k'-1}+{\c'\choose k''-1}+S'+C',
	\end{align*}
	(where $S'$, $C'$, and $R'$ are obtained, respectively, from $S$, $C$ and $R$ by substracting one unit to each binomial denominator). Now combining the above with \eqref{eq:abck-1}, and $C'\geq 0$, and $R'=0$, yields that
	\begin{align}
	{\a\choose k-1}&\doteq{\a'\choose k-1}+S'+R' \nonumber \\
	&={\a'\choose k-1}+S' \qquad \text{as $R'=0$, since $R$ is rubble}\nonumber \\
	&\leq {\b''\choose k'-1}+{\c''\choose k''-1}+S' \qquad \text{by \eqref{eq:abck-1}}  \nonumber \\
	&\leq {\b'\choose k'-1}+{\c'\choose k''-1}+S' \qquad \text{as $\b''\leq_{\text{lex}} \b'$ and $\c''\leq_{\text{lex}} \c'$} \nonumber \\
	&\leq {\b'\choose k'-1}+{\c'\choose k''-1}+S'+C' \qquad \text{$C'\geq 0$} \nonumber \\
	&\doteq {\b\choose k'-1}+{\c\choose k''-1}  \qquad \text{by translation invariance}
	\end{align}
	 which gives the desired inequality \eqref{eq:abc1}.
\end{description}    

\noindent The detail of the implementation of this strategy can be found in the following subsections. The most challenging condition is to find the $R$, $S$, $C$ with the desired properties. We use Lemma~\ref{lem:rec_const} below to perform such reduction, with some additional case analysis.

The reduction from Lemma~\ref{lem:rec_const} is based on pushing the elements of $\b$, $\c$ and a part of $\a$ (forming a wall) away from each other, pushing the $\b$ towards the bottom border (coefficients of the type ${i-1\choose i}$ with $i>0$ grouped in the pavement $C$), and the $\a$ towards the left-hand-side vertical line (coefficients of the type ${i\choose 0}$ grouped in the rubble); when $\b$ or $\c$ collide with the wall, they are grouped in $S$.

The case analysis is done as the induction jumps through the sequence $\a$ substracting one unit to one of its elements not necessarily the last one (the case we are substracting one to ${a_{k-1}\choose 1}$ is treated separately). We also should control the condition $\b'\geq_{\text{lex}} \a''-1$, which is handled ``externally'' to the reduction argument.



\subsection{Reducing the wall and pair of 
	\texorpdfstring{$k$}{k}-binomial decompositions} \label{obs:rrp_2}

\begin{lemma}[Recursive reduction] \label{lem:rec_const}
Let $k\geq 1$ be an integer, $(\w,\ell)$ be a wall of height $h$, $\b$ and $\c$ be $k$-binomial decompositions.
	
	Assume $\max \{\b,\c\}=\b$, $\min \{\b,\c\}=\c$,  $\b$ and $(\w,\ell)$ are not the empty set, ${\b \choose k} \preceq (\w,\ell)$, ${\c \choose k} \preceq (\w,\ell)$, and $w_h\geq 1$.
	Then, 
	\begin{enumerate}[label=(\roman*)]
		\item \label{en:thesis_1} there exist: a wall $(\w',\ell)$, $k$-binomial decomposition $\b'$ and $\c'$, a rubble $R=\sum_{i\in I} {i\choose 0}$, a pavement $C=\sum_{i\in J} {i-1\choose i}$ with $j\geq 1$ for all $j$ in the multiset of integers $J$, and a sum of binomial coefficients $S$ such that
		\[
		{\b \choose k}+{\c \choose k}\doteq {\b' \choose k}+{\c' \choose k}+C+S
		\]
		and 
		\[
		(\w,\ell)\doteq (\w',\ell) + R + S
		\]
		with $\w'<_{\text{lex}} \w$ and $w_i'\leq w_i$ for each $i$ (for which it makes sense),  ${\b' \choose k} \preceq (\w',\ell)$, ${\c' \choose k} \preceq (\w',\ell)$, $b_0'\leq b_0$, $c_0'\leq c_0$, ${\b' \choose k}+{\c' \choose k}< {\b \choose k}+{\c \choose k}$.
		\item \label{en:thesis_2} Furthermore, $\max \{\b',\c'\}=\b'$, $\min \{\b',\c'\}=\c'$.
		\item \label{en:thesis_3} If, additionally, $b_0-k\geq \ell$, then \ref{en:thesis_1} holds, and $(\w',\ell)$ is the empty wall.
		\item\label{en:thesis_4} If, additionally, $w_0\geq k$, then \ref{en:thesis_1} holds and $\b'$ is the empty sequence and $\c'=\c$.
	\end{enumerate}
\end{lemma}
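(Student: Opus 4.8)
The plan is to prove Lemma~\ref{lem:rec_const} by an explicit construction, carried out by induction on the height $h$ of the wall $(\w,\ell)$, the inductive step being a single ``layer peel'' of the wall. Every rewriting in the construction will be an application of the binomial recurrence \eqref{eq.bin_rec}, packaged through the diagonal identity \eqref{eq:diagonal} and the vertical identity \eqref{eq:vertical}, so that all the identities $\doteq$ produced are automatically translation invariant. The three domination conditions \ref{en:w1}--\ref{en:w3} say exactly that, in the geometric picture of Section~\ref{sec:geom_repr}, the wall sits weakly above and weakly to the left of both ${\b\choose k}$ and ${\c\choose k}$: its rightmost column $w_0$ is at most $k$, its top diagonal $\ell$ is at least the diagonal $b_0-k$ of the leading term of $\b$, and in every column it shares with either decomposition the decomposition lies strictly lower; the hypothesis $w_h\ge1$ says the bottom of the wall has not yet reached the rubble line $x=0$.

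The core of the argument is the layer peel. We melt the wall ``from its top'' towards the rubble line by repeatedly applying \eqref{eq:vertical} (resp.\ \eqref{eq:diagonal}) to its coefficients: a coefficient that reaches column $0$ is deposited into the rubble $R$, and the remaining coefficients are reorganised into a strictly lex-smaller wall $(\w',\ell)$ with $w_i'\le w_i$ for all $i$, which by the $w_h\ge 1$ hypothesis still has all its coefficients in columns $\ge0$. Dually, we rewrite ${\b\choose k}+{\c\choose k}$ by pushing the leading terms of $\b$ and $\c$ down along the diagonal staircases issuing from them via \eqref{eq:diagonal}; a coefficient that reaches the boundary diagonal $y=x-1$ is deposited into the pavement $C$, where it vanishes, producing smaller $k$-binomial decompositions $\b',\c'$ with $b_0'\le b_0$ and $c_0'\le c_0$. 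The point of \ref{en:w1} is that these staircases hit the wall precisely at its coefficient of index $i_0=\ell-(b_0-k)$ when $i_0\le h$, and pass strictly below it when $i_0>h$; the coefficients that land exactly on wall coefficients during this process are collected, on both sides simultaneously, into the common piece $S$. Because $\max\{\b,\c\}=\b$ and $\min\{\b,\c\}=\c$ — that is, $\c$ is componentwise dominated by $\b$ and no longer than $\b$ — the staircase of $\c$ stays weakly below that of $\b$, so it collides with the wall along the same pattern; this is what keeps $\max\{\b',\c'\}=\b'$ and $\min\{\b',\c'\}=\c'$, proving \ref{en:thesis_2}. One then verifies, using \ref{en:w2} for the new configuration, that ${\b'\choose k}\preceq(\w',\ell)$ and ${\c'\choose k}\preceq(\w',\ell)$; that the total value strictly drops, because at least one coefficient genuinely migrates into $C$ or gets paired across the seam, hence ${\b'\choose k}+{\c'\choose k}<{\b\choose k}+{\c\choose k}$; and, when a single peel has not collapsed the wall, one invokes the induction hypothesis on the residual shorter wall and reassembles, the translation invariance of the resulting identities of the form \eqref{eq:asr}, \eqref{eq:bcs} guaranteeing the required shape. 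This establishes \ref{en:thesis_1}.

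For the two distinguished cases one first upgrades the hypotheses. If $b_0-k\ge\ell$, then \ref{en:w1} forces $b_0-k=\ell$, and \ref{en:w2} with $i=j=0$ then forces $w_0<k$ (otherwise $b_0<k+\ell$, contradicting $b_0-k=\ell$); geometrically ${b_0\choose k}$ lies on the wall's own top diagonal, so its staircase runs along the top of the wall and domination forces the entire wall to be matched into $S$ and $R$ along the seam, leaving $(\w',\ell)$ empty, which is \ref{en:thesis_3}. Dually, if $w_0\ge k$, then \ref{en:w3} forces $w_0=k$, and \ref{en:w2} then forces $b_0<k+\ell$; now every coefficient of ${\b\choose k}$ sits in a column at most $w_0$, strictly under the wall, so the staircasing consumes $\b$ entirely — its coefficients going into $S$ where they meet the wall and into $C$ otherwise — without ever touching $\c$, giving $\b'$ empty and $\c'=\c$, which is \ref{en:thesis_4}.

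The main obstacle is the bookkeeping of the layer peel: one must schedule the applications of \eqref{eq:diagonal} and \eqref{eq:vertical} so that the piece $S$ carved off the wall is \emph{literally the same} multiset of binomial coefficients as the one carved off ${\b\choose k}+{\c\choose k}$, and so that after removing it both sides return to normal form — a genuine wall $(\w',\ell)$ dominating genuine $k$-binomial decompositions $\b',\c'$ that still satisfy the componentwise $\max/\min$ relation. This is what forces the split into subcases according to whether $w_0=k$ or $w_0<k$, whether $b_0-k=\ell$ or $b_0-k<\ell$, and whether the seam's landing index $i_0$ exceeds $h$ (the case where the staircases exit the wall at the bottom and go straight to pavement); one also has to track the endpoints of the staircases that land on the rubble line or on the boundary diagonal, and the possibility that $\b'$, $\c'$, or the residual wall become empty. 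Once the seam has been fixed, the remaining requirements — $\w'<_{\text{lex}}\w$ with $w_i'\le w_i$, $b_0'\le b_0$, $c_0'\le c_0$, the strict drop in total value, and the three domination inequalities for the new configuration — are routine verifications.
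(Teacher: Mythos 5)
Your outline reproduces the paper's own strategy (push $\b,\c$ toward the boundary diagonal creating pavement, push the wall toward the column $x=0$ creating rubble, collect the collisions into a common piece $S$, then reassemble by translation invariance), but it stops exactly where the proof of Lemma~\ref{lem:rec_const} actually lives. The statement is existential and constructive: one must exhibit the quintuple $[(\w',\ell),\b',\c',R,S,C]$ and verify all the listed invariants, and this is done in the paper by a recursion whose inductive step is an explicit case analysis on the diagonal $j=b_t-(k-t)$ of the \emph{last} term of $\b$ (whether $\b$ ``hits the wall'' or lies below it, whether $w_h$ is larger than, equal to, or smaller than $k-t$, the boundary subcases $w_h=1$, $b_t=k$, $b_0=k$), with recursive calls to parts \ref{en:thesis_3} and \ref{en:thesis_4} for \emph{different} values of $k$ (e.g.\ $k\leftarrow k-t$ or $k\leftarrow w_h$) and an explicit re-sorting of the outputs via $\max\{\cdot,\cdot\}/\min\{\cdot,\cdot\}$ to restore property \ref{en:thesis_2}. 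You defer precisely this (``once the seam has been fixed \dots routine verifications''), but fixing the seam so that the \emph{same} multiset $S$ is carved off both sides while $(\w',\ell)$ stays a wall with the same top diagonal $\ell$, $\b',\c'$ stay $k$-binomial decompositions dominated by $(\w',\ell)$, and the max/min relation survives, is the whole content of the lemma; it is not routine, and nothing in your text specifies it.

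Two concrete points in your sketch would moreover fail as stated. First, your induction measure: a single ``layer peel'' does not in general reduce the height of the wall, and intermediate steps can even increase it (in the paper's Case 2.2.1 the bottom coefficient $w_h$ is replaced by $w_h+\ell-h-b_t$ copies of $w_h-1$), which is why the paper inducts on the lexicographic order of $\w$ together with the value ${\b\choose k}+{\c\choose k}$, not on $h$; also, melting the wall ``from its top'' destroys the required form $(\w',\ell)$ with unchanged $\ell$, since the conclusion demands the surviving wall still occupy the top diagonal. Second, your justification of the strict inequality ${\b'\choose k}+{\c'\choose k}<{\b\choose k}+{\c\choose k}$ (``at least one coefficient genuinely migrates into $C$ or gets paired across the seam'') is not sound: every term of the pavement $C$ has value zero and the rewriting is value-preserving, so migration into $C$ alone produces no drop; the drop equals the value of $S$, and one must check in every branch of the construction that $S$ is nonempty with positive value, which you do not do. Similarly, preservation of $\max\{\b',\c'\}=\b'$ is not automatic from ``the staircase of $\c$ stays weakly below that of $\b$''; the paper has to re-sort explicitly after each recursive call, and in some branches the roles of $\b$ and $\c$ are exchanged outright.
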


Before proceeding further, let us show the immediate consequence of Lemma~\ref{lem:rec_const} that we use in the arguments.

\begin{corollary}[Recursive Reduction, final form] \label{cor:rec_const_fin}
With the same assumptions as Lemma~\ref{lem:rec_const},
 there exist  a wall $(\w',\ell)$, $k$-binomial decompositions  $\b'$ and $\c'$, a rubble $R=\sum_{i\in I} {i\choose 0}$, a pavement $C=\sum_{i\in J} {i-1\choose i}$ with $j\geq 1$ for all $j$ in the multiset of integers $J$, and a sum of binomial coefficients $S$ such that
		\[
		{\b \choose k}+{\c \choose k}\doteq {\b' \choose k}+{\c' \choose k}+C+S
		\]
		and 
		\[
		(\w,\ell)\doteq (\w',\ell) + R + S
		\]
		with either $(\w',\ell)$ the empty wall, or both $\b'$ and $\c'$ being the empty sequences.
\end{corollary}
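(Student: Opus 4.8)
The plan is to derive Corollary~\ref{cor:rec_const_fin} from Lemma~\ref{lem:rec_const} by iterating the recursive reduction until one of the two ``stopping'' conditions is forced. First I would set up an induction on a suitable complexity measure of the triple $((\w,\ell),\b,\c)$: the most natural choice is the total size $N=\binom{\b}{k}+\binom{\c}{k}$, since part \ref{en:thesis_1} of Lemma~\ref{lem:rec_const} guarantees that each application strictly decreases this quantity, $\binom{\b'}{k}+\binom{\c'}{k}<\binom{\b}{k}+\binom{\c}{k}$, while keeping $\b'$ (resp. $\c'$) a $k$-binomial decomposition, preserving $\max\{\b',\c'\}=\b'$ and $\min\{\b',\c'\}=\c'$ by part \ref{en:thesis_2}, and preserving the domination relations $\binom{\b'}{k}\preceq(\w',\ell)$, $\binom{\c'}{k}\preceq(\w',\ell)$. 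So the hypotheses of Lemma~\ref{lem:rec_const} are almost reproduced by its own conclusion.

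The base cases of the induction are exactly the two alternatives in the statement of the corollary: either $(\w',\ell)$ has become the empty wall, or $\b'$ (hence, since $\b'$ dominates/is the maximum, also possibly $\c'$) has become empty. So I would argue as follows. Given a triple satisfying the hypotheses of Lemma~\ref{lem:rec_const}, if $\b$ is empty we are already in the second base case (and then $\max\{\b,\c\}=\b$ forces $\c$ empty too), and if $(\w,\ell)$ is empty we are in the first; in either case take $S=R=C=0$ and $(\w',\b',\c')=(\w,\b,\c)$. Otherwise both $\b$ and $(\w,\ell)$ are non-empty, the hypothesis $w_h\ge 1$ must be checked to still hold (this is the one condition not obviously inherited — see below), and we may apply Lemma~\ref{lem:rec_const} once to get $((\w',\ell),\b',\c')$ together with rubble $R_1$, pavement $C_1$, and a binomial sum $S_1$ with
\[
\binom{\b}{k}+\binom{\c}{k}\doteq\binom{\b'}{k}+\binom{\c'}{k}+C_1+S_1,\qquad (\w,\ell)\doteq(\w',\ell)+R_1+S_1 .
\]
By the strict decrease in $N$ and the preservation of all structural hypotheses, the induction hypothesis applies to $((\w',\ell),\b',\c')$, yielding $((\w'',\ell),\b'',\c'')$, rubble $R_2$, pavement $C_2$, binomial sum $S_2$ with $(\w'',\ell)$ empty or both $\b'',\c''$ empty, and the two translation-invariant identities. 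Substituting the inner identities into the outer ones and using that $\doteq$ is preserved under addition and substitution, one sets $R=R_1+R_2$, $C=C_1+C_2$, $S=S_1+S_2$ and reads off
\[
\binom{\b}{k}+\binom{\c}{k}\doteq\binom{\b''}{k}+\binom{\c''}{k}+C+S,\qquad(\w,\ell)\doteq(\w'',\ell)+R+S,
\]
which is the assertion of the corollary with $(\w',\b',\c')$ renamed to $(\w'',\b'',\c'')$; note that $R$ is still rubble (a sum of $\binom{i}{0}$'s), $C$ is still a pavement with all indices $\ge 1$, since these classes are closed under addition.

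The main obstacle I anticipate is not the bookkeeping of the invariant identities but verifying that the technical side-condition $w_h\ge 1$ (the height-$h$, i.e. lowest, column of the wall lies strictly above the diagonal $y=x$, equivalently its binomial denominator is $\ge 1$) is maintained from one step to the next, or else arguing that when it fails we have already reached a base case. Since Lemma~\ref{lem:rec_const}\ref{en:thesis_1} only asserts $w_i'\le w_i$ and $\w'<_{\text{lex}}\w$, the bottom entry $w_h'$ could in principle drop to $0$ before the wall is exhausted; one must check, presumably from the domination hypotheses $\binom{\b'}{k}\preceq(\w',\ell)$ together with $w_h'\ge\dots$, that either $w_h'\ge 1$ still holds or the wall has genuinely become empty (height $-1$), which is one of the permitted outcomes. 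If the lemma as stated does not already bundle this, a short separate case distinction — ``if the reduced wall still has $w_h'\ge 1$, recurse; otherwise it is empty and we stop'' — closes the gap; I would also double-check that the non-negativity and strict-decrease clauses of \ref{en:thesis_2} really do hand back a triple in the exact hypothesis class of the lemma, since that is what licenses the induction. Everything else is routine closure-under-addition and substitution arguments for $\doteq$, rubble, and pavements.
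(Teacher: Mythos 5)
Your overall strategy is the same as the paper's: iterate Lemma~\ref{lem:rec_const}, sum the successive rubbles, pavements and $S$-terms (all closed under addition and compatible with $\doteq$), and terminate via the strict decrease of $\binom{\b}{k}+\binom{\c}{k}$ (the paper also uses the lex-decrease of the wall). However, there is one concrete hole in your induction step. When the lemma returns through outcome~\ref{en:thesis_4}, the output triple has $\b'$ equal to the empty sequence and $\c'=\c$ possibly nonempty, with the wall $(\w',\ell)$ possibly still nonempty. Such a triple is \emph{not} in the hypothesis class of Lemma~\ref{lem:rec_const} (which requires $\b$ nonempty and $\max\{\b,\c\}=\b$), so your claim that ``the preservation of all structural hypotheses'' licenses applying the induction hypothesis to $((\w',\ell),\b',\c')$ fails exactly there; and your base-case reasoning ``$\b$ empty forces $\c$ empty'' is only valid for triples satisfying the hypotheses, not for outputs of the lemma, so stopping at this point would not give the corollary's conclusion (which demands either the wall empty or \emph{both} $\b'$ and $\c'$ empty). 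The paper closes this case by re-applying the lemma after a swap, $\b\leftarrow\c'$ and $\c\leftarrow\emptyset$, which is legitimate since $\c'$ is a $k$-binomial decomposition still dominated by $(\w',\ell)$, and the quantity $\binom{\b}{k}+\binom{\c}{k}$ has still strictly decreased. With this swap added, your argument goes through.

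Your worry about the side-condition $w_h\ge 1$ being inherited by the reduced wall is reasonable, but it is not a defect relative to the paper: the paper's proof of the corollary re-applies the lemma without comment on this point either, so you are being, if anything, more careful than the source; resolving it (either by checking the reduction never produces a bottom column with denominator $0$ short of emptying the wall, or by treating that event as termination of the wall) is the same kind of short case distinction you sketch.
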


\begin{proof}[Proof of Corollary~\ref{cor:rec_const_fin}]
Apply Lemma~\ref{lem:rec_const} to $(\w,\ell)$, $\b$, $\c$ and obtain $[(\w',\ell),\b',\c',R,S,C]$. Now,
\begin{itemize}
	\item If Lemma~\ref{lem:rec_const} part \ref{en:thesis_1}+\ref{en:thesis_2} are satisfied, then we apply again Lemma~\ref{lem:rec_const} with
	$k\leftarrow k$, $(\w,\ell)\leftarrow (\w',\ell)$, $\b\leftarrow \b'$, and $\c\leftarrow \c'$ with strictly smaller $\w'<_{\text{lex}} \w$ and ${\b' \choose k}+{\c' \choose k}< {\b \choose k}+{\c \choose k}$.
	\item If Lemma~\ref{lem:rec_const} part \ref{en:thesis_3} is satisfied, then we are done.
	\item If Lemma~\ref{lem:rec_const} part \ref{en:thesis_4} is satisfied, and $(\w,\ell)$ is not the empty wall, then we apply  Lemma~\ref{lem:rec_const} again with $k\leftarrow k$, $(\w,\ell)\leftarrow (\w',\ell)$, $\b\leftarrow \c'$, and $\c\leftarrow \emptyset$.
	\item If $(\w,\ell)$ is the empty set, then the statement is satisfied.
	\item If $\b$ and $\c$ are the empty set, then the statement is satisfied.
\end{itemize}
Thus, repeating one of the points above, and as we are always reducing either $(\w,\ell)$, $\b$ or $\c$ we finish in one of the conditions given in the statement of the corollary.
	\end{proof}

\begin{proof}[Proof of Lemma~\ref{lem:rec_const}]
Observe that we are actually showing three results, \ref{en:thesis_1}+\ref{en:thesis_2}, \ref{en:thesis_3}, and \ref{en:thesis_4}. All the results are shown by induction on $\w$, and ${\b \choose k}+{\c \choose k}$, with the base cases begin when $(\w,\ell)$ is the empty set, or $\b$ (and hence $\c$) being the empty set. In those cases, the respective $\b'\leftarrow \b$, $\c'\leftarrow \c$ and $(\w',\ell)\leftarrow (\w,\ell)$ satisfy the conditions.

Let $j=b_t-(k-t)$ be the diagonal where the last term of ${\b\choose k}$ lies on. We perform a case analysis in $j$.



\noindent\framebox[1.1\width][l]{\textbf{Case 1: $\ell-h\leq j \leq \ell$.}}
In this case the ``$\b$ hits the wall''.
Some case analysis.

\noindent\framebox[1.1\width][l]{\textbf{Case 1.1: If $k-t<k$.}} We use \ref{en:thesis_3} on
$k\leftarrow k-t$
$\b\leftarrow (b_t)$, $\c\leftarrow \emptyset$, $(\w,\ell)\leftarrow ((w_{\ell-j},\ldots,w_h),j)$, to obtain 
$\b''$, $\c''$, $(\w'',j)$ as the empty set, $R''$, $S''$, $C''$.

To show \ref{en:thesis_4}, we apply \ref{en:thesis_4} again with
$\b \leftarrow (b_0,\ldots,b_{k-t-1},\b'')$, $\c \leftarrow \c$, $(\w,\ell)\leftarrow ((w_0,\ldots,w_{\ell-j-1}),\ell)$, to obtain 
$\b'''$ the empty set, $\c'''$, $(\w''',\ell)$, $R'''$, $S'''$, $C'''$.
Then \ref{en:thesis_4} follows with
$\b'\leftarrow \b'''$ the empty set, $\c'\leftarrow \c'''$, $(\w',\ell)\leftarrow (\w''',\ell)$, $R\leftarrow R'''+R''$, $S\leftarrow S'''+S''$, $C\leftarrow C'''+C''$.

To show [\ref{en:thesis_1}+\ref{en:thesis_2}, \ref{en:thesis_3}] we apply 
[\ref{en:thesis_1}+\ref{en:thesis_2}, \ref{en:thesis_4}] inductively to
$\b \leftarrow (b_0,\ldots,b_{k-t-1},\max\{\b'',\c''\})$, $\c \leftarrow (c_0,\ldots,c_{t-1},\min\{\b'',\c''\})$, $(\w,\ell)\leftarrow ((w_0,\ldots,w_{\ell-j-1}),\ell)$, to obtain 
$\b'''$, $\c'''$, $(\w''',\ell)$, $R'''$, $S'''$, $C'''$.
Then [\ref{en:thesis_1}+\ref{en:thesis_2}, \ref{en:thesis_3}] follows with
$\b'\leftarrow \b'''$, $\c'\leftarrow \c'''$, $(\w',\ell)\leftarrow (\w''',\ell)$, $R\leftarrow R'''+R''$, $S\leftarrow S'''+S''$, $C\leftarrow C'''+C''$.

\noindent \framebox[1.1\width][l]{\textbf{Case 1.2: If $k-t=k$.}}
Then $b_t-(k-t)=b_0-k=\ell$, and $b_0-(k-w_{0})=w_0+\ell$.
 The condition ${\b \choose k} \preceq (\w,\ell)$ prevents this case from occuring when showing \ref{en:thesis_4}.
 
We let 
\[
{b_0\choose k}\doteq \sum_{i=0}^{k-w_{0}-1}{b_0-i-1 \choose k-i}+{b_0-(k-w_{0}) \choose w_{0}}.
\]
There is a further case analysis.

\noindent \framebox[1.1\width][l]{\textbf{Case 1.2.1: If $b_0>k$.}}

Then we inductively apply \ref{en:thesis_3} (as we are in this case when $k-t=k$), again with
$\b\leftarrow (b_0-1,b_0-2,\ldots,b_{0}-(k-w_{0}))$, $\c\leftarrow \emptyset$, $(\w,\ell'')\leftarrow ((w_{1},\ldots,w_{h}),\ell-1)$ and obtain
$\b''$, $\c''$ the empty set, $(\w'',\ell'')$ the empty set, $R''$, $S''$, $C''$.

Then \ref{en:thesis_3} follows with
$\b'\leftarrow \b'''$, $\c'\leftarrow \c$, $(\w',\ell)$ the empty wall, $R\leftarrow R'''+R''$, $S\leftarrow S'''+S''+{b_0-(k-w_{0}) \choose w_{0}}$, $C\leftarrow C'''+C''$.

Then \ref{en:thesis_1}+\ref{en:thesis_2} follows with
$\b'\leftarrow \max\{\b''',\c\}$, $\c'\leftarrow \min\{\b''',\c\}$, $(\w',\ell)$ the empty wall, $R\leftarrow R'''+R''$, $S\leftarrow S'''+S''+{b_0-(k-w_{0}) \choose w_{0}}$, $C\leftarrow C'''+C''$.

\noindent \framebox[1.1\width][l]{\textbf{Case 1.2.2: If $b_0=k$.}}
Then $\ell=0$, and there is no more wall after removing the term ${b_0-(k-w_{0}) \choose w_{0}}$. 

Then 
 \ref{en:thesis_3} follows with
$\b'$ the empty sequence, $\c'\leftarrow \c$, $(\w',\ell)$ the empty wall, $R\leftarrow \emptyset$, $S\leftarrow {b_0-(k-w_{0}) \choose w_{0}}$, $C \allowbreak\leftarrow \allowbreak \sum_{i=0}^{k-w_{0}-1} \allowbreak{b_0-i-1 \choose k-i}$.

Then 
\ref{en:thesis_1}+\ref{en:thesis_2} follows with
$\b'\leftarrow \c$, $\c'$ the empty sequence, $(\w',\ell)$ the empty wall, $R\leftarrow \emptyset$, $S\leftarrow {b_0-(k-w_{0}) \choose w_{0}}$, $C \leftarrow \sum_{i=0}^{k-w_{0}-1}{b_0-i-1 \choose k-i}$.

\noindent\framebox[1.1\width][l]{\textbf{Case 2: $0\leq j<\ell-h$.}}
In this case the ``wall is above $\b$''.


\noindent \framebox[1.1\width][l]{\textbf{Case 2.1:  $w_h> k-t$.}}
So $\b$ streches strictly beyond the vertical given by $w_h$. Use the column of $w_h$ to kill the corresponding part of $\b$, then add the part of $c$ and continue.
That is to say.

We use \ref{en:thesis_3} on
$k\leftarrow w_h$
$\b\leftarrow (b_{k-w_h},\ldots,b_t)$, $\c\leftarrow \emptyset$, $(\w,\ell-h)\leftarrow ((w_{h}),\ell-h)$, to obtain 
$\b''$  as the empty set, $\c''$ as the empty set, $(\w'',\ell-h)$, $R''$, $S''$, $C''$.

To show \ref{en:thesis_4}, we apply \ref{en:thesis_4} with
$\b \leftarrow (b_0,\ldots,b_{k-t-1},\b'')$, $\c \leftarrow \c$, $(\w,\ell)\leftarrow ((w_0,\ldots,w_{\ell-j-1}),\ell)$, to obtain 
$\b'''$ the empty set, $\c'''$, $(\w''',\ell)$, $R'''$, $S'''$, $C'''$.
Then \ref{en:thesis_4} follows with
$\b'\leftarrow \b'''$ the empty set, $\c'\leftarrow \c'''$, $(\w',\ell)\leftarrow (\w''',\ell)$, $R\leftarrow R'''+R''$, $S\leftarrow S'''+S''$, $C\leftarrow C'''+C''$.

To show [\ref{en:thesis_1}+\ref{en:thesis_2}, \ref{en:thesis_3},\ref{en:thesis_4}] we apply 
[\ref{en:thesis_1}+\ref{en:thesis_2},\ref{en:thesis_3}, \ref{en:thesis_4}] inductively to
$\b \leftarrow (b_0,\ldots,b_{w_h-1},c_{w_h},\ldots,c_{t'})$, $\c \leftarrow (c_0,\ldots,c_{w_h-1})$, $(\w,\ell)\leftarrow ((w_0,\ldots,w_{h-1},\w''),\ell)$, to obtain 
$\b'''$, $\c'''$, $(\w''',\ell)$, $R'''$, $S'''$, $C'''$.
Then [\ref{en:thesis_1}+\ref{en:thesis_2}, \ref{en:thesis_3},\ref{en:thesis_4}] follows with
$\b'\leftarrow \b'''$, $\c'\leftarrow \c'''$, $(\w',\ell)\leftarrow (\w''',\ell)$, $R\leftarrow R'''+R''$, $S\leftarrow S'''+S''$, $C\leftarrow C'''+C''$.

\noindent \framebox[1.1\width][l]{\textbf{Case 2.2:  $w_h=k-t$.}} So $\b$ streches right until the vertical of the last part of the wall. We use $w_h$ to explicitly remove $b_{t}$ and continue.

Let 
\begin{equation} \label{eq:decomp_hor0}
{w_h+\ell-h\choose w_h}\doteq \sum_{i=0}^{w_h+\ell-h-b_t-1}{w_h+\ell-h-i-1 \choose w_h-1}+{b_t \choose w_{h}}
\end{equation}

\noindent \framebox[1.1\width][l]{\textbf{Case 2.2.1:  $w_h>1$.}}
To show [\ref{en:thesis_1}+\ref{en:thesis_2}, \ref{en:thesis_3},\ref{en:thesis_4}] we apply 
[\ref{en:thesis_1}+\ref{en:thesis_2},\ref{en:thesis_3}, \ref{en:thesis_4}] inductively to
$\b \leftarrow (b_0,\ldots,\allowbreak b_{t-1},c_{t})$, $\c \leftarrow (c_0,\ldots,c_{t-1})$, $(\w,\ell)\leftarrow ((w_0,\ldots,w_{h-1},w_h-1,\ldots,w_h-1),\ell)$ ($w_h+\ell-h-b_t$ copies of $w_h-1$), to obtain 
$\b'''$, $\c'''$, $(\w''',\ell)$, $R'''$, $S'''$, $C'''$.
Then [\ref{en:thesis_1}+\ref{en:thesis_2}, \ref{en:thesis_3},\ref{en:thesis_4}] follows with
$\b'\leftarrow \b'''$, $\c'\leftarrow \c'''$, $(\w',\ell)\leftarrow (\w''',\ell)$, $R\leftarrow R'''$, $S\leftarrow S'''+{b_t \choose w_{h}}$, $C\leftarrow C'''$.

\noindent \framebox[1.1\width][l]{\textbf{Case 2.2.2:  $w_h=1$.}} 
To show [\ref{en:thesis_1}+\ref{en:thesis_2}, \ref{en:thesis_3},\ref{en:thesis_4}] we apply 
[\ref{en:thesis_1}+\ref{en:thesis_2},\ref{en:thesis_3}, \ref{en:thesis_4}] inductively to
$\b \leftarrow (b_0,\ldots,\allowbreak b_{t-1},c_{t})$, $\c \leftarrow (c_0,\ldots,c_{t-1})$, $(\w,\ell)\leftarrow ((w_0,\ldots,w_{h-1}),\ell)$, to obtain 
$\b'''$, $\c'''$, $(\w''',\ell)$, $R'''$, $S'''$, $C'''$.
Then [\ref{en:thesis_1}+\ref{en:thesis_2}, \ref{en:thesis_3},\ref{en:thesis_4}] follows with
$\b'\leftarrow \b'''$, $\c'\leftarrow \c'''$, $(\w',\ell)\leftarrow (\w''',\ell)$, $R\leftarrow R'''+\sum_{i=0}^{w_h+\ell-h-b_t-1}{w_h+\ell-h-i-1 \choose w_h-1}$, $S\leftarrow S'''+{b_t \choose w_{h}}$, $C\leftarrow C'''+C''$.

\noindent \framebox[1.1\width][l]{\textbf{Case 2.3:  $w_h<k-t$.}}
Extend $b_t$ until beneath $w_h$, which will be above the extended part of $b_t$; then erase that part with $w_h$ using \ref{en:thesis_3}, and continue. this involve a case analysis depending on the leftover part of $b_t$.

Let 
\begin{equation} \label{eq:decomp_hor1}
{b_{t}\choose k-t}\doteq \sum_{i=0}^{k-w_{h}-1}{b_t-i-1 \choose k-i}+{b_t-(k-w_{h}) \choose w_{h}}
\end{equation}

We use \ref{en:thesis_3} on
$k\leftarrow w_h$
$\b\leftarrow (b_t-(k-w_{h}))$, $\c\leftarrow \emptyset$, $(\w,\ell-h)\leftarrow ((w_{h}),\ell-h)$, to obtain 
$\b''$  as the empty set, $\c''$ as the empty set, $(\w'',\ell-h)$, $R''$, $S''$, $C''$.

Note that $S''$ will be ${b_t-(k-w_{h}) \choose w_{h}}$, as we will use case 2.2 in the following step.
Now there is a case analysis, as we should decide where to put the leftover from the decomposition of ${\b_t\choose k-t}$ from \eqref{eq:decomp_hor1}.

\noindent \framebox[1.1\width][l]{\textbf{Case 2.3.1: $b_t>k$.}}
To show [\ref{en:thesis_1}+\ref{en:thesis_2}, \ref{en:thesis_3},\ref{en:thesis_4}] we apply 
[\ref{en:thesis_1}+\ref{en:thesis_2},\ref{en:thesis_3}, \ref{en:thesis_4}] inductively to
$\b \leftarrow (b_0,\ldots,\allowbreak b_{t-1},\max\{(b_t-1,\ldots,b_t-(k-w_h-1)-1),(c_t)\})$, $\c \leftarrow (c_0,\ldots,c_{t-1},\max\{(b_t-1,\ldots,b_t-(k-w_h-1)-1),(c_t)\})$, $(\w,\ell)\leftarrow ((w_0,\ldots,w_{h-1},\w''),\ell)$, to obtain 
$\b'''$, $\c'''$, $(\w''',\ell)$, $R'''$, $S'''$, $C'''$.
Then [\ref{en:thesis_1}+\ref{en:thesis_2}, \ref{en:thesis_3},\ref{en:thesis_4}] follows with
$\b'\leftarrow \b'''$, $\c'\leftarrow \c'''$, $(\w',\ell)\leftarrow (\w''',\ell)$, $R\leftarrow R'''+R''$, $S\leftarrow S'''+S''$, $C\leftarrow C'''+C''$.

\noindent \framebox[1.1\width][l]{\textbf{Case 2.3.2: $b_t=k$.}}
To show [\ref{en:thesis_1}+\ref{en:thesis_2}, \ref{en:thesis_3},\ref{en:thesis_4}] we apply 
[\ref{en:thesis_1}+\ref{en:thesis_2},\ref{en:thesis_3}, \ref{en:thesis_4}] inductively to
$\b \leftarrow (b_0,\ldots,\allowbreak b_{t-1},c_t)\})$, $\c \leftarrow (c_0,\ldots,c_{t-1})$, $(\w,\ell)\leftarrow ((w_0,\ldots,w_{h-1},\w''),\ell)$, to obtain 
$\b'''$, $\c'''$, $(\w''',\ell)$, $R'''$, $S'''$, $C'''$.
Then [\ref{en:thesis_1}+\ref{en:thesis_2}, \ref{en:thesis_3},\ref{en:thesis_4}] follows with
$\b'\leftarrow \b'''$, $\c'\leftarrow \c'''$, $(\w',\ell)\leftarrow (\w''',\ell)$, $R\leftarrow R'''+R''$, $S\leftarrow S'''+S''$, $C\leftarrow C'''+C''+\sum_{i=0}^{k-w_{h}-1}{b_t-i-1 \choose k-i}$.
\end{proof}

In Figure~\ref{fig:reductions} we can find a geometric representation of the reductions performed in Lemma~\ref{lem:rec_const}.

\begin{figure}[ht]
	\begin{center}
		\begin{tikzpicture}[scale=0.5]
		\draw[lightgray]  (0,0) grid (7,7);
		\draw[dotted] (0,2)--(5,7);
		\draw[fill,blue] (5,7) circle (2pt);
		\foreach \i in {(5,6),(4,5),(3,5)}
		{
			\draw[fill,white] \i circle (4pt);
			\draw[blue] \i circle (4pt);
		}
		\draw[fill, red] (3,5) circle (2pt);
		\node at (3,-1) {(1.a)};
		\end{tikzpicture}
		\hspace{5mm}
		\begin{tikzpicture}[scale=0.5]
		\draw[lightgray]  (0,0) grid (7,7);
		\draw[dotted] (0,2)--(5,7);
		\draw[dotted] (0,1)--(6,7);
		\draw[dotted] (0,0)--(7,7);
		\draw[fill,blue] (5,7) circle (2pt);
		\foreach \i in {(5,6),(4,4),(3,2), (3,5), (2,3), (2,2)}
		{
			\draw[fill,white] \i circle (4pt);
			\draw[blue] \i circle (4pt);
		}
		\draw[fill, red] (3,5) circle (2pt);
		\draw[fill, red] (2,3) circle (2pt);
		\draw[fill, red] (2,2) circle (2pt);
		
		\node at (3,-1) {(1.b)};
		\end{tikzpicture}
		\hspace{5mm}
		\begin{tikzpicture}[scale=0.5]
		\draw[lightgray]  (0,0) grid (7,7);
		\draw[dotted] (0,2)--(5,7);
		\draw[dotted] (2,0)--(7,5);
		\draw[fill,blue] (6,4) circle (2pt);
		\draw[fill, red] (3,5) circle (2pt);
		\foreach \i in {(6,3),(5,2),(4,1), (3,1)}
		{
			\draw[fill,white] \i circle (4pt);
			\draw[blue] \i circle (4pt);
		}
		\foreach \i in {(2,4),(2,3),(2,2), (2,1), (3,1)}
		{
			\draw[fill,white] \i circle (3pt);
			\draw[red] \i circle (3pt);
		}
		\node at (3,-1) {(2.1)};
		\end{tikzpicture}
		\hspace{5mm}
		\begin{tikzpicture}[scale=0.5]
		\draw[lightgray]  (0,0) grid (7,7);
		\draw[dotted] (0,5)--(2,7);
		\draw[fill, red] (2,7) circle (2pt);
		\foreach \i in {(1,6),(1,5),(2,4), (0,3), (0,2), (0,1), (1,1)}
		{
			\draw[fill,white] \i circle (4pt);
			\draw[red] \i circle (4pt);
		}
		\draw[fill,blue] (2,4) circle (2pt);
		\draw[fill,blue] (1,1) circle (2pt);
		\node at (3,-1) {(2.2)};
		\end{tikzpicture}
	\end{center}
	\caption{The solid blue points are from ${\b\choose k}$, the solid red points are from $(\w,\ell)$, the hollow points are the result of the reductions in the various cases of Lemma \ref{lem:rec_const} and the bicolored points are the terms in $S$.}\label{fig:reductions}
\end{figure}
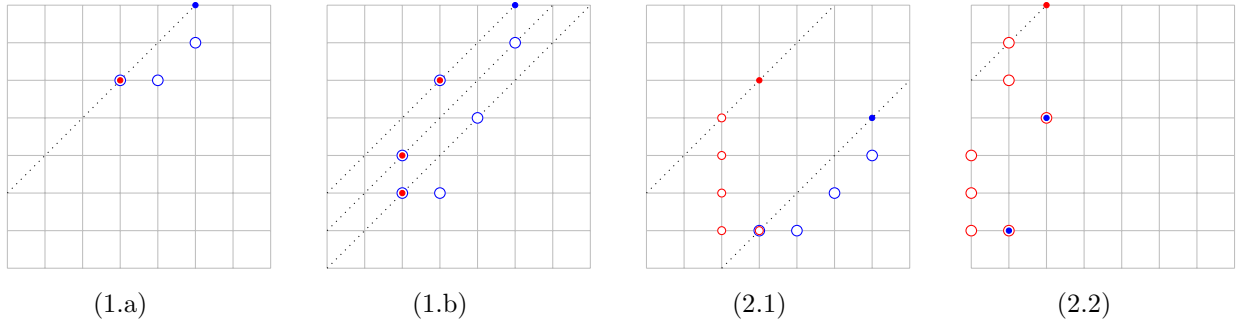

\begin{remark}
Lemma~\ref{lem:rec_const} is used in two places: the proof of  Lemma~\ref{lem:abck} and the proof of Corollary~\ref{cor.1}.
One could define the $n$-rubble ($n$ for negative) to be a sum of binomial terms of the form ${\cdot \choose -1}$. Then a statement similar to Lemma~\ref{lem:rec_const} follows with $w_h\geq 0$, and were the rubble $R$ is an $n$-rubble. The proof of such statement is similar as the argument leading to Lemma~\ref{lem:rec_const}, but the boundary cases for $w_h$ are $0$ instead of $1$. This implies that the cases 2.2.1 and 2.2.2 dissapear, and the new boundary cases occur in Case 2.3, where the separate case when $w_h=k-t-1=0$ should be distinguished; this is the one creating the rubble with binomial denominators $-1$. The rubble of the type ${\cdot \choose -1}$ very slightly simplifies the argument leading Lemma~\ref{lem:abck} and sketched in Section~\ref{subsec:sketch}. On the other hand, it gives difficulties in showing Corollary~\ref{cor.1}, as there we crucually use the properties of the rubble with ${\cdot \choose 0}$.
\end{remark}

\subsection{Proof of Lemma~\ref{lem:abck}}\label{subsec:proofabck}

The proof of  inequality \eqref{eq:abc1} in Lemma  \ref{lem:abck}  is by induction on $k$ and $m={\a \choose k}$.

\paragraph{Base cases 1: $k=1$ and any $m>0$.}

 For $k=1$ the result holds for each $m$. Indeed, we have ${\a\choose k-1}=1$. If $k',k''\geq k$, then at least of ${\b\choose k'}$ or ${\c\choose k''}$ is non-empty, and it contains a binomial coefficient that is larger or equal to ${i\choose i}$ with $i\geq 1$, hence ${\b\choose k'-1}+{\c\choose k''-1}\geq 1$. If $k'=1$ and $k''=k-1$, and ${\b\choose k}\geq 1$, then ${\b\choose k-1}\geq 1$ and the result also holds (as ${\c\choose k-2}$ is $\geq 0$ if ${\c\choose k-1}$ is non-empty and $=0$ if it is the empty sequence). If $k'=1$ and $k''=k-1$, and ${\b\choose k}=0$, then the condition $\b\geq_{\text{lex}}\a-1$ forces $b_0=k-1$, which implies that ${\b\choose k-1}\geq 1$, and thus shows the statement in these first base cases.

\paragraph{Base cases 2: any $k>1$ and $m\leq k$.}



We have
\[{\a\choose k}=\sum_{i=0}^{m-1}{k-i\choose k-i}, \] 
 \[{\b\choose k'}=\sum_{i=0}^{m'-1}{k'-i\choose k'-i}+\sum_{i=m'}^{m'''-1}{k'-i-1\choose k'-i}\]
  and 
  \[{\c\choose k''}=\sum_{i=0}^{m''-1}{k''-i\choose k''-i}+\sum_{i=m''}^{m''''-1}{k''-i-1\choose k''-i}\] while
\[{\a\choose k-1}=\sum_{i=0}^{m-1}{k-i\choose k-i-1},\]  
\[{\b\choose k'-1}=\sum_{i=0}^{m'-1}{k'-i\choose k'-i-1}+\sum_{i=m'}^{m'''-1}{k'-i-1\choose k'-i-1}\] and \[{\c\choose k''-1}=\sum_{i=0}^{m''-1}{k''-i\choose k''-i-1}+\sum_{i=m''}^{m''''-1}{k''-i-1\choose k''-i-1}\; . \]
The case $k',k''\geq k$ clearly holds as, for each $i$ for which $b_i\geq k'-i$ exists then ${b_i \choose k'-i-1}\geq {a_i \choose k-i-1}$ (and the same holds whenever $c_i\geq k''-i$), and there is the same number of terms in both places of binomial coefficients at both sides of the equality ${\a\choose k}={\b\choose k'}+{\c\choose k''}$.
To show the case $k'=k,k''=k-1$ we either have ${\b\choose k}>0$ and then we may use a similar argument as before, or we have ${\b\choose k}=0$. In this second case, the condition $\b\geq_{\text{lex}} \a-1$ imposed on the representation of zero implies that ${\b\choose k-1}\geq m$. Then, if $m\leq k-1$ we have ${\c\choose k-2}= (k-1)+(k-2)+\ldots+(k-m)$ while ${\a\choose k-1}=k+(k-1)+\ldots+(k-(m-1))$ and thus showing the result. The case when $m=k$ we have ${\a\choose k-1}=\binom{k+1}{2}$ while ${\c\choose k-2}=\binom{k}{2}$ and ${\b \choose k-1}\geq k$, thus showing the result as well in this case.

\paragraph{Induction step.}

Assume that $m>k>1$. We assume that $k'\ge k''$. We may also assume that
\begin{equation}\label{eq:assumption_diff}
(b_0,k')\neq (a_0, k)
\end{equation}
 since otherwise we may delete ${b_0\choose k'}$ and ${a_0\choose k}$  in \eqref{eq:abc0} and apply induction on $k$. Similarly, $(c_0,k'')\neq (a_0, k)$.
 
Using Claim~\ref{cl.3}, it suffices to show the statement when \eqref{eq:assumption_lem} is satisfied.

 Let $r$ be the largest subscript such that $a_r-(k-r)=a_0-k$. Thus ${a_r\choose k-r}$ corresponds to the  smaller binomial coefficient in ${\a\choose k}$ on the same diagonal as ${a_0\choose k}$. We write ${a_r\choose k-r}={a_r-1\choose k-r}+{a_r-1\choose k-r-1}$
and let $\a'$ be the sequence obtained from $\a$ by replacing $a_r$ by $a_r-1$. By the definition of $r$ and the fact that $m>k$, ${\a'\choose k}$ is  a $k$--binomial decomposition. 

Now there is a small case analysis on $r$. Observe that, as $\a$ is a $k$-binomial decomposition, $0\leq r\leq k-1$.

\noindent \textbf{Case $r=k-1$.} 
Then $k-r-1=0$. We have
\[
{\a \choose k}\doteq {\a'\choose k} + {a_r-1\choose k-r-1}\doteq {\a'\choose k} + {a_r-1\choose 0}\doteq {\a'\choose k} + {a_r-1\choose 0}\doteq{\a'\choose k} +R ={\a'\choose k} + 1
\]
Now, if $\c$ is a $k''$-binomial decomposition different from $0$, then we consider ${\c'\choose k''}\stackrel{b}{=}{\c\choose k''}-1$ and $\b'=\b$; if $\c$ is a $k''$-binomial decomposition equal to $0$, then we consider ${\b'\choose k'}\stackrel{b}{=}{\b\choose k'}-1$ and $\c'=\c$ the empty decomposition.
In both cases we have $\b'\leq_{\text{lex}} \b$, $\c'\leq_{\text{lex}} \c$, and binomial decompositions, and $\b'\geq_{\text{lex}} \a'-1$. Then we also have
\begin{align} 
{\a'\choose k}&={\b'\choose k'}+{\c'\choose k''}\label{eq:ind_hyp_1} \\
{\b'\choose k'-1}+{\c'\choose k''-1}&\leq {\b\choose k'-1}+{\c\choose k''-1} \nonumber \\
{\a'\choose k-1}&={\a\choose k-1} \nonumber
\end{align}
The last equality follows as ${\a\choose k-1}>k$,  and $r$ is on the largest diagonal and $r=k-1$, so $a_{k-1}-1\geq 1$, and we are only substracting $1$ unit to ${\a \choose k}$.
Since $\a'\leq_{\text{lex}}\a$, and is a $k$-binomial decomposition, and $\b'$ and $\c'$ are $k',k''$-binomial decompositions (and $\b'\geq_{\text{lex}}\a'-1$ in the $(k,k-1)$ case), by \eqref{eq:ind_hyp_1} we may apply the induction hypothesis on $\a',\b',\c'$ to conclude that
\[
{\a'\choose k-1}\leq {\b'\choose k'-1}+{\c'\choose k''-1}
\]
and then, if $R'={a_r-1\choose -1}=0$, we have:
\[
{\a\choose k-1}\doteq {\a'\choose k-1}+R'\leq {\b'\choose k'-1}+{\c'\choose k''-1}+R'={\b'\choose k'-1}+{\c'\choose k''-1}\leq {\b\choose k'-1}+{\c\choose k''-1}
\]
showing the claim in this case.

\noindent \textbf{Case $0\leq r<k-1$.} Then $k-r-1\geq 1$, and we consider the
wall $W={a_r-1\choose k-r-1}$ which satisfies $w_0=w_h=k-r-1\geq 1$.
By \eqref{eq:assumption_lem}, 
$\b,\c$ are $k'$ and $k''$ binomial decompositions.
Let us further assume that:
\begin{equation} \label{eq:assumption}
	\text{there exists the term $b_{k'-k+r+1}$ in $\b$ or the term $c_{k''-k+r+1}$ in $\c$.}
\end{equation}
The case when \eqref{eq:assumption} is not satisfied is given in the ``Otherwise'' case below.

Let $k'''=k-r-1$, and
lets define  ${\b^+\choose k'''}$ and ${\c^-\choose k'''}$  as 
\begin{align}
\b^+&=\max\{(b_{k'-k+r+1},b_{k'-k+r+1+1},\ldots,b_{k'-1}),(c_{k''-k+r+1},c_{k''-k+r+1},\ldots,c_{k''-1})\},\nonumber \\ \c^-&=\min\{(b_{k'-k+r+1},b_{k'-k+r+1+1},\ldots,b_{k'-1}),(c_{k''-k+r+1},c_{k''-k+r+1},\ldots,c_{k''-1})\} \nonumber
 \end{align}
 Using \eqref{eq:assumption}, the resulting binomial sums ${\b^+\choose k'''}$ and ${\c^-\choose k'''}$ are $k''$--binomial decompositions, and at least ${\b^+\choose k'''}$ is non-empty. We denote by 
\begin{align}
\b^{in}&=\begin{cases} (b_0,\ldots ,b_{k'-k+r}) & \text{if $b_{k'-k+r+1}$ exists} \\
\b& \text{otherwise}
\end{cases} \nonumber \\
\c^{in}&=\begin{cases} (c_0,\ldots ,c_{k''-k+r}) & \text{if $c_{k''-k+r+1}$ exists} \\
\c& \text{otherwise}
\end{cases} \nonumber
\end{align}
 the initial segments of the sequences $\b$ and $\c$, so that ${\b^{in}\choose k'}$ is a $k'$--binomial decomposition and ${\c^{in}\choose k''}$ is a $k''$--binomial decomposition
 \begin{equation}\label{eq:not_assumption}
\text{Note: if \eqref{eq:assumption} is not satisfied, then $\b=\b^{in}$ and $\c=\c^{in}$.}
\end{equation}
Since these simply involve a rearrangement of the binomial sums, we certainly have
\begin{equation}\label{eq:n+}
{\b^{in}\choose k'}+{\b^+\choose k'''}+{\c^{in}\choose k''}+{\c^-\choose k'''}\;\;\doteq\;\; {\b\choose k'} +{\c\choose k''}
\end{equation}

\begin{claim} \label{claim:1} The wall $W={a_r-1\choose k-r-1}$ (equivalently $((k-r-1),a_r-1-(k-r-1))$) dominates ${\b \choose k'}$ and ${\c \choose k''}$, and ${\b^+\choose k'''}$ and ${\c^{-}\choose k'''}$.
\end{claim}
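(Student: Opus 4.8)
The strategy is to verify, in each of the four cases, the three defining conditions (D1)--(D3) of domination, using the reductions already in force (from \eqref{eq:assumption_diff} and Claim~\ref{cl.3}): that $\b$ and $\c$ are genuine binomial decompositions, that $(b_0,k')\neq(a_0,k)$ and $(c_0,k'')\neq(a_0,k)$, and, in the mixed regime $(k',k'')=(k,k-1)$, that $\b>_{\mathrm{lex}}\a-1$. Throughout I would lean on the elementary bounds ${\a\choose k}<{a_0+1\choose k}$ (valid since $m>k$ forces $a_0\geq k+1$) and, in the mixed regime, ${\c\choose k-1}\leq{\a-1\choose k-1}<{a_0\choose k-1}$, the first of which follows from $\b\geq_{\mathrm{lex}}\a-1$ together with ${\a\choose k}={\a-1\choose k}+{\a-1\choose k-1}$, plus the standard monotonicity facts about binomial coefficients along diagonals.

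\textbf{First step: (D3) and a reduction.} Since $0\le r<k-1$ we have $1\le w_0=w_h=k-r-1\le k-1$, which is at most each of $k'$, $k''$ and $k'''=k-r-1$, so (D3) holds in all four cases. For ${\b^{+}\choose k'''}$ and ${\c^{-}\choose k'''}$ the wall has the single column $w_0=k'''$ (height $0$), so domination reduces to $b_0^{+}<w_0+\ell=a_r-1$ (respectively $c_0^{-}<a_r-1$), which also yields the corresponding (D1). Now $b_0^{+}=\max\{b_{k'-k+r+1},\,c_{k''-k+r+1}\}$ and $c_0^{-}=\min\{b_{k'-k+r+1},\,c_{k''-k+r+1}\}$ over whichever of these entries exist, and the wall column $k-r-1$ is precisely the column carrying $b_{k'-k+r+1}$ in ${\b\choose k'}$ and $c_{k''-k+r+1}$ in ${\c\choose k''}$; hence these inequalities are exactly what (D2) for ${\b\choose k'}$ and ${\c\choose k''}$ asserts (an empty $\b^{+}$ or $\c^{-}$ making its statement vacuous, and $\b^{+}$ being nonempty by \eqref{eq:assumption}). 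So it suffices to prove (D1)+(D2) for ${\b\choose k'}$ and ${\c\choose k''}$.

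\textbf{(D1) and the generic (D2).} For (D1): if the leading entry of ${\b\choose k'}$ lay strictly above the wall's diagonal, i.e.\ $b_0-k'\geq a_0-k+1$, then since $k'\geq k$ we would get ${b_0\choose k'}\geq{(a_0-k+1)+k\choose k}={a_0+1\choose k}>{\a\choose k}\geq{\b\choose k'}$, a contradiction; the case $(b_0,k')=(a_0,k)$ is excluded by \eqref{eq:assumption_diff}. The argument for ${\c\choose k''}$ is identical when $k''\geq k$, and in the mixed regime one runs it against ${\a-1\choose k-1}$, with the sole borderline equality (which forces $\a$ to be a full length-$k$ decomposition and $\c=(a_0)$, hence $\b=_{\mathrm{lex}}\a-1$) ruled out by strictness of $\b>_{\mathrm{lex}}\a-1$. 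For (D2) in the generic case I would argue by contradiction: if the entry $b_J$ of ${\b\choose k'}$ on column $k-r-1$ (with $J=k'-k+r+1$) satisfied $b_J\geq a_r-1$, then ${b_J\choose k-r-1}$ lies on a diagonal $\geq a_0-k$; since diagonals along a binomial decomposition are non-increasing and (D1) puts the leading entry on a diagonal $\leq a_0-k$, all of $b_0,\dots,b_J$ must lie on the diagonal $a_0-k$, forcing $b_i=(a_0+k'-k)-i$. A hockey-stick evaluation of this forced head, compared with the head $\sum_{i=0}^{r}{a_0-i\choose k-i}={a_0+1\choose k}-{a_0-r\choose k-r-1}$ of ${\a\choose k}$ (whose remaining entries lie on diagonals $\leq a_0-k-1$ and hence sum to less than ${a_0-r-1\choose k-r-1}$), yields ${\b\choose k'}>{\a\choose k}$, a contradiction. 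The same computation settles ${\c\choose k''}$ whenever $k''\geq k$, since then $J=k''-k+r+1>r$ and the overflow is genuine.

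\textbf{The main obstacle.} The delicate case is (D2) for ${\c\choose k''}$ in the mixed regime $k''=k-1$: there the wall column $k-r-1$ corresponds to the index $r$ itself, so the forced-head argument produces \emph{no} extra diagonal-$(a_0-k)$ entry and hence no strict overflow; it only pins down $c_i=a_i-1$ for $0\leq i\leq r$, a configuration that must then be excluded by hand. Because in this configuration the head of ${\c\choose k-1}$ coincides with the head of ${\a-1\choose k-1}$ while the tail of $\c$ need not, a plain value comparison only gives ${\b\choose k}\geq{\a-1\choose k}$, which is consistent with $\b>_{\mathrm{lex}}\a-1$; one really has to squeeze a contradiction out of the \emph{strict} lex inequality (the one clean case being $\c=\a-1$, which forces $\b=_{\mathrm{lex}}\a-1$), and one must also treat separately the situation in which $\a$ ends in a diagonal entry $a_t=k-t$, so that $\a-1$ is not itself a clean decomposition. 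This borderline analysis, tight by a single unit, is where the proof concentrates its difficulty and where I would expect to spend most of the effort.
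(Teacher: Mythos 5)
Most of your proposal runs parallel to the paper's own argument and is fine: condition \ref{en:w3} from $k',k''\ge k'''=k-r-1$, the reduction of ${\b^+\choose k'''}$ and ${\c^-\choose k'''}$ to the corresponding single entries of ${\b\choose k'}$ and ${\c\choose k''}$, condition \ref{en:w1} via the value comparison with ${a_0+1\choose k}$ (and, in the mixed regime, with ${\a-1\choose k-1}$), and condition \ref{en:w2} for $k''\ge k$ by an overflow count — the paper gets the overflow from the bound $\sum_{i>r}{a_i\choose k-i}<{a_r-1\choose k-r-1}$ where you use a hockey-stick evaluation, but it is the same estimate. The problem is the case you yourself single out as ``the main obstacle'' and then leave open: \ref{en:w2} for ${\c\choose k-1}$ when $(k',k'')=(k,k-1)$. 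This is not a loose end; it is the only part of the claim that is not already implicit in the generic computation (and the $(k,k-1)$ regime is the one the whole lemma is really about), and the route you sketch for closing it cannot succeed. The configuration you propose to ``exclude by hand'', namely $c_i=a_i-1$ for $0\le i\le r$, is compatible with every hypothesis in force: take $\c=(a_0-1,\ldots,a_r-1)$ and $\b=(a_0-1,\ldots,a_r-1,a_{r+1},\ldots,a_t)$. These are genuine $(k-1)$- and $k$-binomial decompositions, ${\a\choose k}={\b\choose k}+{\c\choose k-1}$ by the binomial recurrence applied to the first $r+1$ terms, and $\b>_{\text{lex}}\a-1$ holds strictly because $b_{r+1}=a_{r+1}>a_{r+1}-1$. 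This is exactly the second splitting recorded in \eqref{eq:numerical_colex_uniq}, so no contradiction can be extracted from the strict lex inequality, and an argument built on ruling this configuration out is doomed.

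The paper does something different here: from \eqref{eq:assumption_lem} together with ${\a\choose k}={\a-1\choose k}+{\a-1\choose k-1}$ it deduces ${\c\choose k-1}\le{\a-1\choose k-1}$, hence $\c\le_{\text{lex}}\a-1$, and then uses this lexicographic bound positionally: every term of ${\c\choose k-1}$ lies on a diagonal at most $a_0-k$, and the term in the wall's column (the index $r$) has numerator at most $a_r-1$. (In the configuration above that single cell is attained with equality, i.e.\ ${c_r\choose k-r-1}$ coincides with the wall term; the paper treats this borderline very briefly, and it is precisely the extremal splitting of \eqref{eq:numerical_colex_uniq}, so any correct write-up must allow for it rather than exclude it.) So the repair of your proof is to replace the attempted contradiction in the mixed regime by the bound $\c\le_{\text{lex}}\a-1$ and argue from the position of the terms of $\c$; as written, your argument is incomplete in its decisive case, and the announced strategy for that case would fail.
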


\begin{proof} Since $k',k''\geq k'''$, and $k'''=k-r-1=w_0$, then condition \ref{en:w3} also holds for ${\b \choose k'}$, ${\c \choose k''}$, ${\b^+\choose k'''}$, ${\c^{-}\choose k'''}$. In particular, and since each binomial coefficient of ${\b^+\choose k''}$ and ${\c^{-}\choose k''}$ comes from ${\b\choose k'}$ and ${\c\choose k''}$, it suffices to show that $W$ dominates ${\b \choose k'}$ and ${\c \choose k''}$.	

We observe that ${b_0\choose k'}\le {\b\choose k'}\le {\a\choose k}<{a_0+1\choose k}$. Since binomial coefficients are increasing along diagonals and vertical lines and $k'\ge k$, then ${b_0\choose k'}$ lies on a diagonal not larger than $a_0-k$, and so do all terms of ${\b\choose k'}$. A similar argument applies to ${\c\choose k''}$ if $k''\ge k$. If $k''=k-1$ and $k'=k$ then \eqref{eq:assumption_lem} implies that $\c \le_{lex} \a-1$.   This proves condition \ref{en:w1} for ${\b \choose k'}$ and ${\c \choose k''}$.

Let us show that \ref{en:w2} holds. By \ref{en:w1}, all terms in ${\b\choose k'}$ and in ${\c\choose k''}$ lie in not larger diagonals than $a_0-k$.
Suppose that  ${b_0\choose k'}$ is in the same diagonal as ${a_0\choose k}$. Then $k'>k$ as we are assuming ${b_0\choose k'}\neq {a_0\choose k}$ (see \eqref{eq:assumption_diff}). Hence, ${b_0 \choose k'}>{a_0\choose k}$  and ${\b\choose k'}\le {\a\choose k}$ imply that, if there is a term of ${\b \choose k'}$ in the vertical line of ${a_r-1\choose k-r-1}$ then it should lie below it, as the tail $\sum_{i> r}{a_i\choose k-i}<{a_r-1\choose k-r-1}$ (note that $r$ is the largest index on the largest diagonal). If ${b_0\choose k'}$ is in a lower  diagonal than ${a_0\choose k}$ (which is the same as the diagonal of ${a_r-1\choose k-r-1}$), then a term of ${\b\choose k'}$ in the vertical line of  ${a_r-1\choose k-r-1}$, if exists, is below it. A similar argument applies to ${\c\choose k''}$ in the case that $k''\ge k$. In the case that $k''=k-1$ then $k'=k$ and as before \eqref{eq:assumption_lem} implies that $\c\le_{lex} \a-{\mathbf 1}$. It follows that a term of ${\c\choose k''}$ in the vertical line of ${a_j-1\choose k-j-1}$, if exists, is below it. This shows \ref{en:w2}.
\end{proof}

Using Claim~\ref{claim:1}, and the definition of ${\b^+\choose k'''}$ and ${\c^-\choose k'''}$, we conclude that the triple $\left[W,{\b^+\choose k'''},{\c^-\choose k'''}\right]$ satisfy the assumptions of Corollary~\ref{cor:rec_const_fin} with $k\leftarrow k''$, $(\w,\ell)\leftarrow ((k-r-1),a_r-1-(k-r-1))=W$ wall of height $0$, $\b\leftarrow \b^+$, and $\c\leftarrow \c^-$.
Hence, we obtain the pentuple
$\left(\left[W', {\b_1\choose k'''},{\c_1\choose k'''}\right], S_1, R_1, C_1\right)$
with
\begin{quote}
	$W'$ is the empty sum \qquad or,\qquad $\b_1$ and $\c_1$ are the empty sequences.
\end{quote}
Now we do a case analysis. We shall observe that, if $b_{k'-k+r+1}$ or $c_{k''-k+r+1}$ does not exists, then ${\c^-\choose k'''}$ is the empty sequence all along the process involved in Corollary~\ref{cor:rec_const_fin} (see the condition $c_0'\leq c_0$ in Part~\ref{en:thesis_1}).

\noindent\framebox[1.1\width]{\textbf{If $W'$ is the empty sum.}} Then the reduction process is finished. We let 
\[
{\b''\choose k'}=
\begin{cases}
{\b^{in}\choose k'}+{\b_1\choose k'''} & \text{if $b_{k'-k+r+1}$ exists and $(\b^{in},\b_1)$ is strictly decreasing} \\
{\b^{in}\choose k'}+{\c_1\choose k'''} & \text{if $b_{k'-k+r+1}$ exists and $(\b^{in},\b_1)$ is not strictly decreasing} \\
{\b^{in}\choose k'}&\text{ otherwise}
\end{cases}
\]
and
\[{\c''\choose k''}=
\begin{cases}
{\c^{in} \choose k''}+{\c_1\choose k'''} & \text{if $b_{k'-k+r+1}$ exists and $(\b^{in},\b_1)$ is strictly decreasing} \\
{\c^{in} \choose k''}+{\b_1\choose k'''} & \text{if $b_{k'-k+r+1}$ exists and $(\b^{in},\b_1)$ is not strictly decreasing} \\
{\c^{in}\choose k''}+{\b_1\choose k'''}&\text{ otherwise}
\end{cases}
\]
and $S=S_1$, $R=R_1$, $C=C_1$.
The argument for the case $k',k''\geq k$ continues below.

For the case $k'=k,k''=k-1$: if $b^{in}_{0}=b_{0}''$ does not exist correspond to cases with $m< k$ (by the condition $\b\geq_{\text{lex}} \a-1$), which are treated in the base cases.
Now, since $a_{r}'=a_r-1\geq k-r$ (otherwise $m< k$), we have that if $\b\geq_{\text{lex}}\a-1$, then $\b^{in}>_{\text{lex}}\a'-1$, as $b^{in}_r=b_r$ if $b_r$ exists, or there exists a smaller index $0\leq r'<r$ with $b_r=b^{in}_{r'}>a_{r'}=a'_{r'}$, and thus $\b^{in}>_{\text{lex}}\a'-1$ follows as well, which implies that $\b''>_{\text{lex}}\a'-1$.
%



\noindent \framebox[1.1\width]{\textbf{Otherwise: $\b_1$ and $\c_1$ are the empty sequences.}} We certainly have $W'<{\b^{in}\choose k'}+{\c^{in}\choose k''}$ (as $W<{\b^{in}\choose k'}+{\b^+\choose k'''}+{\c^{in}\choose k''}+{\c^-\choose k'''}$). We also have ${\b^{in}\choose k'}\preceq W'$ and ${\c^{in}\choose k'}\preceq W'$. This instance also handles the case when \eqref{eq:assumption} is not satisfied using \eqref{eq:not_assumption}. We apply the Corollary~\ref{cor:rec_const_fin} to $[W',{\c^{in}\choose k'},\emptyset]$ with return either $([\emptyset,{\c_2\choose k'},\emptyset],S_2,R_2, D_2)$ or  $[W'',\emptyset,\emptyset]$; in the first case we stop, and in the second we apply Corollary~\ref{cor:rec_const_fin} again to $[W'',{\b^{in}\choose k'},\emptyset]$ and will obtain $([\emptyset ,{\b_3\choose k'},\emptyset],S_3,R_3, D_3)$ (as $W'<{\b^{in}\choose k'}+{\c^{in}\choose k''}$ and the reduction of the wall leaves some non-zero summand, the ruble, while the reductions of $\b$ and $\c$ leave a non-zero summan, the pavement).
In the first case, we let $\b''=\b^{in}$, $\c''=\c_3$, $C=C_1+C_2$, $R=R_1+R_2$, $S=S_1+S_2$; in the second case, we let  $\b''=\b_3$, $\c''=\emptyset$, $C=C_1+C_2+C_3$, $R=R_1+R_2+R_3$, $S=S_1+S_2+S_3$.

\noindent \framebox[1.1\width]{\textbf{Resume the argument}}In both cases ($W'$ being empty, or $\b_1$ and $\a_1$ being empty), we can write the invariant identities
\begin{align}
{\b\choose k'}+{\c\choose k''}&\doteq{\b''\choose k'}+{\c''\choose k''}+S+C,\label{eq:invbc}\\
{\a\choose k}&\doteq {\a' \choose k}+{a_r-1\choose k-r-1}\doteq {\a' \choose k}+S+R.\label{eq:inva}
\end{align}
As $C=0$, it follows that
$$
{\a' \choose k}+R= {\b''\choose k'}+{\c''\choose k''},
$$

Now, let ${\b'''\choose k'}$ ${\c'''\choose k''}$ be two binomial decompositions such that 
$$
{\a' \choose k}= {\b'''\choose k'}+{\c'''\choose k''},
$$
and where $\b'''$ and $\c'''$ have been obtained by first decreasing $\c'''$
and then $\b'''$ if $\c'''$ turns to be the empty sequence. In particular, $\b'''\leq_{\text{lex}} \b''$ and $\c'''\leq_{\text{lex}} \c''$ which means that 
\begin{equation}\label{eq:ind_step_2}
{\b'''\choose k'-1}+{\c'''\choose k''-1}\leq {\b'''\choose k'-1}+{\c'''\choose k''-1}.
\end{equation}

 Overall, we have that either $R\neq \emptyset$ or $S\neq \emptyset$, which in particular implies that ${\b'''\choose k'}$ ${\c'''\choose k''}<{\b\choose k'}+{\c\choose k''}$.
Furthermore, in the case that $k'=k$ and $k''=k-1$ we also have
$\b''\geq _{\text{lex}} \a'-1$, and thus $\b'''\geq _{\text{lex}} \a'-1$
Therefore, by induction, 
\begin{equation}
\label{eq:ind_step}
{\a' \choose k-1}\le {\b'''\choose k'-1}+{\c'''\choose k''-1}.
\end{equation}
On the other hand, by translation invariance, \eqref{eq:invbc}, \eqref{eq:inva}, we conclude that
\begin{align}
{\b\choose k'-1}+{\c\choose k''-1}&\doteq {\b''\choose k'-1}+{\c''\choose k''-1}+S'+C',\label{eq:ind_step_4}\\
{\a\choose k-1}&\doteq {\a' \choose k-1}+S'+R', \label{eq:ind_step_3}
\end{align}
where $S'$, $C'$ and $R'$ are obtained by sustracting one unit to all lower terms of binomial coefficients in $S$, $C$ and $R$ respectively. As the pavement $C$ has the extra property that the terms ${i-1\choose i}$ satisfy $i\geq 1$, then $C'\geq 0$, and as all the binomial denominators of the terms in $R$ are $0$, then $R'=0$.
It follows that
\begin{align}
{\a\choose k-1}&\doteq {\a' \choose k-1}+S'+R' \qquad \eqref{eq:ind_step_3} \nonumber \\
&={\a' \choose k-1}+S' \qquad \text{as $R'=0$} \nonumber \\
&\le {\b'''\choose k'-1}+{\c'''\choose k''-1}+S' \qquad \text{by \eqref{eq:ind_step}} \nonumber \\
&\leq {\b'''\choose k'-1}+{\c'''\choose k''-1}+S'+C'  \qquad \text{$C'\geq 0$}
\nonumber \\
&\leq {\b''\choose k'-1}+{\c''\choose k''-1}+S'+C'  \qquad \eqref{eq:ind_step_2}
\nonumber \\
&\doteq{\b\choose k'-1}+{\c\choose k''-1}, \qquad \eqref{eq:ind_step_4}\label{eq:concl}
\end{align}
completing the proof for this general case.
\begin{observation}\label{obs:conseq_of_proof}
	If $C$ is not the empty set in this induction step, then there is an strict inequality in \eqref{eq:concl} and thus in \eqref{eq:abc1}.
\end{observation}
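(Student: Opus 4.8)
The plan is to isolate, within the chain \eqref{eq:concl}, the single step that involves the pavement term $C'$ and to show that this step is strict whenever $C\neq\emptyset$. Recall that the pavement produced by Lemma~\ref{lem:rec_const} (via Corollary~\ref{cor:rec_const_fin}) in this induction step has the form $C=\sum_{i\in J}\binom{i-1}{i}$, where $J$ is a multiset of integers with $i\ge 1$ for every $i\in J$; in particular all of its coefficients are $+1$.

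First I would record the two elementary facts this shape entails. Each term $\binom{i-1}{i}$ equals $0$, since $i-1<i$ and $i>0$; this is exactly why $C$ ``represents the zero'' and may be discarded from the invariant identity \eqref{eq:invbc} (as done in the line preceding \eqref{eq:ind_step_2}). On the other hand, after subtracting one unit from the lower index one gets $\binom{i-1}{i-1}=1$, because $i-1\ge 0$, so $C'=\sum_{i\in J}\binom{i-1}{i-1}=|J|$. Hence $C'\ge 1$ as soon as $J$ (equivalently $C$) is non-empty.

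Then I would re-examine \eqref{eq:concl}. Every relation in that chain is either a translation-invariant identity $\doteq$ --- which, read at the trivial translation $r=s=0$, is a genuine equality --- or a non-strict inequality, with the sole exception of the passage from the third to the fourth line, which adjoins the term $C'$ and is justified merely by $C'\ge 0$. By the previous paragraph, if $C\neq\emptyset$ then $C'\ge 1>0$, so that step becomes a strict inequality; consequently the two endpoints of \eqref{eq:concl} satisfy $\binom{\a}{k-1}<\binom{\b}{k'-1}+\binom{\c}{k''-1}$, which is precisely strict inequality in \eqref{eq:abc1}.

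I do not foresee any genuine obstacle here: the only point that deserves a word of justification is that a non-empty $J$ forces $C'>0$, and this is immediate because $C$ is a multiset sum (hence has strictly positive integer coefficients) whose terms each have non-negative numerator equal to one less than their lower index, so subtracting a unit from the lower index turns every term into $1$.
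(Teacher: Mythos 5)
Your proposal is correct and coincides with the paper's (implicit) justification: the pavement terms $\binom{i-1}{i}$ with $i\ge 1$ vanish, while after lowering the denominator they become $\binom{i-1}{i-1}=1$, so $C'=|J|\ge 1$ whenever $C\neq\emptyset$, making the step in \eqref{eq:concl} that adjoins $C'$ strict and hence the inequality \eqref{eq:abc1} strict. This is exactly how the Observation follows from the displayed chain, so no further comment is needed.
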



The proof of inequality \eqref{eq:abc11}  of Lemma~\ref{lem:abck}  follows with similar arguments as the one above for \eqref{eq:abc1}. Let us highlight the differences. The term $C'$ does not contribute to the inequality, unless the terms are of the type ${0\choose 1}$ in $C$, that turn into ${-1\choose 0}=1$ in $C'$; in particular, Observation~\ref{obs:conseq_of_proof} holds only in this case. $R'$ behaves as in the previous case. The base cases are similar: for $k=1$, as it only matters that we are achieving the terms with binomial denominator being $0$, the same argument holds. For the case $k>1$ and $m< k$, since we have the same number of terms in both sides, and they continue to be of the type ${i\choose i}$, \eqref{eq:abc11} holds; this argument also can be extended to $m=k$.


\subsection{Proof of Corollary~\texorpdfstring{\ref{cor.1}}{11}, or \texorpdfstring{\eqref{eq:abc-i}}{(5)} of Lemma \texorpdfstring{\ref{lem:abc}}{2}} \label{sec:proof_equality}


Let us first show  \eqref{eq:ai} and \eqref{eq:ai_m_1}. 

Now:
\begin{enumerate}
	\item Use Claim~\ref{cl.3} on the given $(\a,\b,\c)$ to obtain $(\a,\b'',\c'')$ with $\b''$ and $\c''$ being binomial decompositions.
	\item \label{en:red_2}  Reduce the $\c''$ to $\c'''$, if needed, to obtain an equality ${\a \choose k}={\b''\choose k}+{\c''' \choose k-1}$. If $\c'''$ turn into zero, then $\b''=\a$ and the results are obtained.
	\item \label{en:red_3} Apply Lemma~\ref{lem:abck} to $(\a,\b'',\c''')$
	\item \label{en:red_4} Use Observation~\ref{obs:minus1} on the result to obtain new $k-1$, $k-1$, and $k-2$ binomial decompositions $(\a',\b''',\c'''')$.
	\item \label{en:red_5} Repeat this procedure beginning at Step~\ref{en:red_2}, until the first binomial decomposition is comprised of a ${\cdot \choose 0}$ term; at this point, the argument is finished and the inequalities \eqref{eq:ai} and \eqref{eq:ai_m_1} are shown.
\end{enumerate}

Let us now move on towards the ``moreover'' part. The hypothesis states that we have equalities for the quadruples: $(\a,\b,\c,k)$ and $(\a,\b,\c,k-1)$.
That is,
\[
{\a \choose k}={\b \choose k}+{\c \choose k-1}, {\a \choose k-1}={\b \choose k-1}+{\c \choose k-2}
\]
then we consequently have
\[
{\a+1 \choose k}={\b+1 \choose k}+{\c+1 \choose k-1},
\]
and $\a+1,\b+1,\c+1$ are all binomial decompositions.
Then we have, by \eqref{eq:ai_m_1} applied to $(\a+1,\b+1,\c+1,k)$ with $i=2$, and \eqref{eq:ai_m_1} to $(\a,\b,\c,k-1)$, which are also binomial decompositions for such ``$k$'', and obtain that:
\[
{\a-1 \choose k-2}\leq {\b-1 \choose k-2}+{\c \choose k-3}, {\a-1 \choose k-1}\leq {\b-1 \choose k-1}+{\c \choose k-2}
\]
with the previous equality 
\[{\a \choose k-1}={\b \choose k-1}+{\c \choose k-2}\]
we conclude that, actually,
\[
{\a-1 \choose k-2}= {\b-1 \choose k-2}+{\c \choose k-3}, {\a-1 \choose k-1}= {\b-1 \choose k-1}+{\c \choose k-2}
\]
so we have equalities for $(\a-1,\b-1,\c-1,k-1)$ and $(\a-1,\b-1,\c-1,k-2)$. Then a similar argument will claim equalities for all $(\a-i,\b-i,\c-i,k-i)$ and $(\a-i,\b-i,\c-i,k-i-1)$ for all $i\geq 0$ (the first pair of equalities being the ones of the hypothesis).

In particular, we have equality for $k-i=1$, so 
$(\a-(k-1),\b-(k-1),\c-(k-1),1)$ and $(\a-(k-1),\b-(k-1),\c-(k-1),0)$. Now, this implies that we have also all the equalities:
\begin{center}
$(\a-(k-1)+j,\b-(k-1)+j,\c-(k-1)+j,1)$ and for all $j\geq0$.
\end{center}
Now we use $(\a-(k-1)+1,\b-(k-1)+1,\c-(k-1)+1,1)$ and the equality for $(\a-i,\b-i,\c-i,k-i)$ with $i=k-2$ to obtain an equality for $(\a-(k-2)+1,\b-(k-2)+1,\c-(k-2)+1,2)$; then we use the equality for 
$(\a-(k-2)+1,\b-(k-2)+1,\c-(k-2)+1,2)$ and  $(\a-(k-1)+j,\b-(k-1)+j,\c-(k-1)+j,1)$ with $j=2$ to obtain an equality
$(\a-(k-2)+2,\b-(k-2)+2,\c-(k-2)+2,2)$. We repeat this procedure for $j=3,4,\ldots$ to obtain all the equalities claimed in \eqref{eq:a-j}.

Let us now show \eqref{eq:numerical_colex_uniq} using an inductive argument.
The result holds true for $m<k$ and all $k>1$ as one can check. Assume the result holds for some $k'<k$ and $m'<m$. First we use the same argument as in Claim~\ref{cl.3} to conclude that, if we want equality 
\begin{equation} \label{eq.hip3}
{\a \choose k-1}={\b \choose k-1}+{\c \choose k-2}
\end{equation}
then we can assume that $\c$ is a $k-1$ binomial decomposition and that $\b$ is a $k$-binomial decomposition with $\b\geq_{\text{lex}}\a-1$. The assumption that $\b$ is a $k$-binomial decomposition only excludes, under the premise that
${\a \choose k-1}={\b \choose k-1}+{\c \choose k-2}$, the case where $\b=\a-1=\c$ which is a valid decomposition and it is covered by \eqref{eq:numerical_colex_uniq} with $i=t+1$.
Thus, we may assume that $\a,\b,\c$ are binomial decompositions,
 and we proceed as the decomposition in the proof of Lemma~\ref{lem:abck}.
So, let $r$ be the largest index for which $a_r-(k-r)=a_0-k>0$ (the $>0$ holding by the assumption $m\geq k$). Let 
$\a'=(a_0,a_1,\ldots,a_{r-1},a_r-1,a_{r+1},\ldots,a_t)$ which still have $t<k-1$, and is a $k$-binomial decomposition as $m\geq k$.
Then we obtain a decomposition (using Corollary~\ref{cor:rec_const_fin} and the arguments in the final steps of the proof of Lemma~\ref{lem:abck} to remove the wall to the empty set):
\begin{align}
{\a\choose k}&\doteq {\a'\choose k}+{a_{r}-1 \choose k-r-1} \doteq {\a'\choose k}+S+R \nonumber \\
{\b\choose k}+{\c\choose k}&\doteq {\b'\choose k}+{\c' \choose k-1}+S+C \nonumber \\
{\a'\choose k}+S+R&={\b'\choose k}+{\c' \choose k-1}+S+C \label{eq:oo}
\end{align}
with $S$ different from the empty set.
Since $\b$ and $\c$ are binomial decompositions, then we use Observation~\ref{obs:conseq_of_proof} to conclude that $C$ is the empty set (for otherwise we conclude that the hypothesis \eqref{eq.hip3} does not hold).
Thus, we have that \eqref{eq:oo} reads as
\begin{equation}
{\a'\choose k}+S+R={\b'\choose k}+{\c' \choose k}+S
\end{equation}
Now we know that:
\begin{equation}
{\a'\choose k-1}+S'+R'={\b'-\choose k}+{\c'-1 \choose k-2}+S'
\end{equation}
as ${\a \choose k-1}={\b \choose k-1}+{\c \choose k-2}$
and thus
\begin{equation}
{\a'\choose k-1}={\b'\choose k}+{\c'-1 \choose k-2}
\end{equation}
since $R$ is just a bunch of binomial coefficients of the type ${x\choose 0}$ with $x \geq 0$ being an integer, then $R'=\sum {x \choose -1}=0$. However, by letting $\a''$ such that ${\a''\choose k}\stackrel{b}{=}{\a'\choose k}+R$, if $R$ is not the empty set then ${\a'\choose k}+R>{\a'\choose k}$, and since $\a'$ has $t+1<k$ coefficients, then 
${\a''\choose k-1}>{\a'\choose k-1}$, as ${\a'''\choose k}\stackrel{b}{=}{\a'\choose k}+1$ is such that $a_{t+1}'''=k-(t+1)\geq 1$, which means that ${\a'''\choose k-1}>{\a'\choose k-1}$, and since $\a''\geq_{\text{lex}}\a'''$, then ${\a''\choose k-1}\geq{\a'''\choose k-1}> {\a'\choose k-1}$ as claimed.

Therefore, we have that $R$ is the empty set, and thus
\begin{equation}
{\a'\choose k}+S={\b'\choose k}+{\c' \choose k-1}+S,\qquad {\a'\choose k-1}+S'={\b'\choose k-1}+{\c' \choose k-1-1}+S'
\end{equation}
and so 
\begin{equation}
{\a'\choose k}={\b'\choose k}+{\c' \choose k-1},\qquad {\a'\choose k-1}={\b'\choose k-1}+{\c' \choose k-1-1}
\end{equation}
which, by induction hypothesis and since $\a'$ has $t$ coefficients, then \eqref{eq:numerical_colex_uniq} is also satisfied. Indeed, observe that
\begin{align}
{\a'\choose k}&\doteq{\b'\choose k}+{\c' \choose k-1}\qquad \text{ by induction hypothesis}\nonumber \\
{\a \choose k}&\doteq{\a'\choose k}+S, S={a_{r}-1\choose k-r-1}, \qquad \text{ by construction of $S$ and argument on $R=\emptyset$}\nonumber \\
{\b\choose k}+{\c\choose k-1}&\doteq {\b'\choose k}+{\c'\choose k-1}+S \qquad \text{ by construction of $S$ and argument on $D=\emptyset$}\nonumber \\
&\text{thus (translation invariance is transitive)}\nonumber\\
{\a\choose k}&\doteq{\b\choose k}+{\c \choose k-1}
\end{align}
which implies \eqref{eq:conseq1}. The part \eqref{eq:numerical_colex_uniq} follows by using the induction hypothesis on $(\a',\b',\c')$, and then the fact that $S$ should be produced from ${\b\choose k}+{\c\choose k-1}$ using the operations described in Proposition~\ref{prop:char_ti} (in fact, depending on $\a$, $\b'$ and $\c'$, the choices for $\b$ and $\c$ are very limited, for instance, if the term ${a_{r}-1\choose k-r}$ is split into ${a_{r}-2\choose k-r}+{a_{r}-2\choose k-r-1}$,  between $\b'$ and $\c'$, and $a_{r+1}\leq a_r-3$, then $\b$ and $\c$ would split the term into two ${a_{r}-1\choose k-r}+{a_{r}-1\choose k-r-1}$ as well); one may perform a small case analysis together with the fact that the term ${a_{r}-1 \choose k-r-1}$ should be spared among $\b'$ and $\c'$ without leaving any rubble. The proof is completed.

\section{Unicity of the colex order}\label{sec:ucolex}

In this section we prove Theorem \ref{thm:ucolex} on the unicity of the colex order. We start with part \ref{en:ucol_1} of the Theorem. We call a sequence noncomplete (at level $k$) if $\ell (\a)<k$.

\begin{lemma}\label{lem:l(a)<k}
Let  $S$ be an extremal family of $\binom{[n]}{k}$ and $\binom{\a}{k}\stackrel{b}{=}|S|$. If $\a$ is noncomplete at level $k$, then $S$ is  the initial segment of length $m=|S|$ in  the colex order.
\end{lemma}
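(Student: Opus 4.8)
The plan is to prove the statement by induction on $m=|S|$, using Theorem~\ref{thm:charac} together with the strong restriction that Corollary~\ref{cor.1} (equation~\eqref{eq:numerical_colex_uniq}) places on the relevant binomial decompositions — a restriction that is available precisely because $\ell(\a)<k$. First I would record a fact valid for every extremal family: by the ``furthermore'' part of Theorem~\ref{thm:kk+fg}, $|\Delta^{k-1}(S)|=\binom{\a}{1}$, and since $\Delta^{k-1}(S)$ consists of the singletons of the support of $S$, the ground set of an extremal family has size exactly $\binom{\a}{1}$. The base cases are $m\le k$ (where $|\Delta^i(S)|$ is forced at every level $i$, so $S=I_{n,k}(m)$ directly) together with the ``consecutive'' decompositions $\a=(k,k-1,\dots,k-t)$, which are treated by hand.

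For the inductive step, pick an element $x$ of maximum degree and apply Theorem~\ref{thm:charac}. Suppose first that $x$ satisfies alternative~\ref{en:t2}, i.e. $d_S(x)=\binom{\a-1}{k-1}$. Put $B=S\setminus S(x)$. By~\ref{en:t21} we have $\Delta(B)\subseteq S(x)\setminus x$, while $|\Delta(B)|\ge\binom{\a-1}{k-1}=|S(x)\setminus x|$ by Theorem~\ref{thm:kk+fg}; hence $\Delta(B)=S(x)\setminus x$ and $B$ is extremal with $k$-binomial decomposition $\a-1$, which is noncomplete (length $\le k-1<k$) and of cardinality $\binom{\a-1}{k}<m$. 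By the induction hypothesis $B\cong I_{n,k}(\binom{\a-1}{k})$, and $S=B\cup\{Y\cup\{x\}:Y\in\Delta(B)\}$. A direct computation then shows that $I_{n,k}(m)$ has the same shape: for a minimum element $v$ of its support one has $d_{I_{n,k}(m)}(v)=\binom{\a-1}{k-1}$, the family $I_{n,k}(m)\setminus I_{n,k}(m)(v)$ is an initial colex segment of size $\binom{\a-1}{k}$, and $I_{n,k}(m)(v)\setminus v$ is its lower shadow. Since any isomorphism of families of $k$-sets carries lower shadows to lower shadows, an isomorphism $B\to I_{n,k}(m)\setminus I_{n,k}(m)(v)$ extends, by $x\mapsto v$, to an isomorphism $S\to I_{n,k}(m)$. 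If instead $x$ satisfies alternative~\ref{en:t1}, then~\ref{en:t12} gives that $S\setminus S(x)$ and $S(x)\setminus x$ are both extremal of cardinality $<m$, while~\ref{en:t13} together with the identity $\binom{\a}{k}=\binom{\b(x)}{k}+\binom{\c(x)}{k-1}$ and Corollary~\ref{cor.1}/\eqref{eq:numerical_colex_uniq} (this is where $\ell(\a)<k$ is used) forces $(\b(x),\c(x))$ into one of the two explicit families listed there. In each of them the decompositions of $S\setminus S(x)$ and of $S(x)\setminus x$ are again noncomplete, so by induction both are initial colex segments, and the inclusion~\ref{en:t11} lets one glue them back together and read off that $S$ is the initial colex segment of size $m$.

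The main obstacle is this last gluing/alignment step. Knowing that $B=S\setminus S(x)$ is an initial colex segment and that $C=S(x)\setminus x$ is an extremal family with $C\subseteq\Delta(B)$ does not by itself force $C$ to be the initial colex segment of $\Delta(B)$ in the labelling that makes $B$ initial — for instance a triangle on $\{1,3,4\}$ is an extremal family sitting inside $\binom{[4]}{2}$ without being its initial segment — so one must exhibit an automorphism of $B$ carrying $C$ into place. This is exactly where $\ell(\a)<k$ re-enters: the relevant initial colex segments have a ``full'' bottom block $\binom{[a_0]}{k}$ or $\binom{[a_0-1]}{k}$, whose automorphism group is the full symmetric group on its ground set, and a short recursion over this block structure supplies the required automorphism — whereas for complete decompositions the bottom block is too small, which is precisely why uniqueness genuinely fails there.
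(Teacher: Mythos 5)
The main gap is in your alternative~\ref{en:t2} branch. The inequality $|\Delta(B)|\ge\binom{\a-1}{k-1}$ is not what Theorem~\ref{thm:kk+fg} gives you: it bounds $|\Delta(B)|$ below by $\binom{\b}{k-1}$, where $\binom{\b}{k}\stackrel{b}{=}|B|=\binom{\a-1}{k}$, and $\a-1$ is the genuine $k$--binomial decomposition of $|B|$ only when $a_t>k-t$. When $a_t=k-t$ (perfectly compatible with $\ell(\a)<k$) the decomposition of $|B|$ drops the last term and the Kruskal--Katona bound is $\binom{\a-1}{k-1}-1$, so the conclusion $\Delta(B)=S(x)\setminus x$ does not follow -- and indeed it is false, even for the colex segment itself. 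Take $k=3$, $m=11$, $\a=(5,2)$, $S=\binom{[5]}{3}\cup\{\{1,2,6\}\}$ and $x=1$, a maximum-degree element with $d_S(1)=7=\binom{\a-1}{k-1}$, so your alternative~\ref{en:t2} applies. Then $B=\binom{\{2,3,4,5\}}{3}$, $|\Delta(B)|=6$ while $|S(x)\setminus x|=7$, and $B\cup\{Y\cup\{x\}:Y\in\Delta(B)\}$ has only $10$ sets; your reconstruction of $S$ from $B$ and $x$, and the isomorphism transfer built on it, collapse because the sets of $S(x)\setminus x$ outside $\Delta(B)$ (here $\{2,6\}$) are exactly what remains uncontrolled. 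The paper avoids this trap by choosing a \emph{minimum}-degree element and splitting on whether the decomposition $\b$ of $|S\setminus S(i)|$ equals $\a-1$ as a sequence; only in that case is the bound $\binom{\a-1}{k-1}$ legitimate, and the situation of the example is routed to the other branch.

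A second, smaller issue is that the gluing step in alternative~\ref{en:t1} -- which you rightly identify as the crux -- is only sketched, and the sketch covers only part of the needed cases. When \eqref{eq:numerical_colex_uniq} gives $\b=(a_0)$, the bottom block of $S\setminus S(x)$ is all of $\binom{[a_0]}{k}$ and the ``full symmetric group'' argument works; but in the other splittings of \eqref{eq:numerical_colex_uniq} the bottom block is not full, and one must show that the orderings making $S(x)\setminus x$ and $\Delta(S\setminus S(x))$ initial segments can be chosen to coincide. The paper proves this via a degree-matching claim: using the inclusion \ref{en:t11} and the explicit shape of \eqref{eq:numerical_colex_uniq}, the elements of largest degree in the two initial segments are forced to agree stratum by stratum, and only the low-degree leftover can be permuted freely. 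Nothing in your proposal substitutes for this argument, so even after repairing the \ref{en:t2} branch you would still need to supply it.
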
 

\begin{proof}
The proof is by induction on $m=|S|$ and $k$. We recall that, without loss of generality, we assume that $[n]$ is the support of $S$. Since $S$ is an extremal set, the support is $\Delta^{k-1}(S)$ with cardinality $a_0$ if $\ell(\a)=1$ or $a_0+1$ if $\ell (\a)>1$. 

Choose an element $i\in [n]$ with minimum degree and let   $\binom{\b}{k}\stackrel{b}{=}|S\setminus S(i)|$ and $\binom{\c}{k-1}\stackrel{b}{=}|S(i)|$.
For $m\le k$ the result holds trivially as  all families of $k$--sets of $[k+1]$ are isomorphic to initial segments of the colex order. 
Suppose $m>k$. Then $a_0>k$ and by Theorem~\ref{thm:charac} $\b$ is not empty and by Lemma~\ref{lem:a-1<b}, $\b\geq_{\text{lex}}\a-1$.

If $\b=_{\text{lex}}\a-1$ then Theorem~\ref{thm:charac} implies that $S(i)\setminus i$ is extremal and $\Delta(S\setminus S(i))\subset S(i)\setminus i$. Since $\b=_{\text{lex}}\a-1$ we have $\c=_{\text{lex}}\a-1$ as well, namely, $|S(i)|={\a-1\choose k-1}$. By  Theorem~\ref{thm:kk} we have $\Delta(S\setminus S(i))\ge {\a-1\choose k-1}$ and the inclusion $\Delta(S\setminus S(i))\subset S(i)\setminus i$ implies that equality holds. It follows that $S\setminus S(i)$ is an extremal set as well. By induction, as ${\b\choose k}={\a-1\choose k}$ is noncomplete, $S\setminus S(i)$ is an initial segment in the colex order. Therefore $\Delta (S\setminus S(i))=S(i)\setminus i$ is also an initial segment in the colex order. By choosing $i=1$, $S(i)$ is also an initial segment in the colex order. It follows that $S=(S\setminus S(i))\cup S(i)$ is also an initial segment in the colex order. This completes this case.

Suppose $\b>_{\text{lex}}\a-1$. Then Theorem~\ref{thm:charac} implies that both $S(i)\setminus i$ and $S\setminus S(i)$ are extremal and $S(i)\setminus i \subset \Delta(S\setminus S(i))$. By \eqref{eq:numerical_colex_uniq}, both ${\b\choose k}$ and ${\c\choose k-1}$ are incomplete.
By induction both sets are initial segments in the colex order. It remains to show that there is a common ordering of the elements of the support in order that their union is also an initial segment in the colex order. 

We observe that, in the initial segment of the colex order $I$ with support $[n]$, then we have $d_I(1)\ge d_I(2)\ldots \ge d_I(n)$.  Reciprocally, if $d_I(i_1)\ge d_I(i_2)\ge \cdots \ge d_I(i_n)$ then $I$ is isomorphic to the initial segment in the colex order (the isomorphism is given by the ordering $i_1<i_2<\cdots <i_n$ of the support). In oher words, two elements with the same degree can be exchanged while preserving the property that $I$ is an initial segment in the colex order.   Therefore the Claim below completes the proof of the Theorem.     

Given $i$ in the support $[n]$ of $S$, let us use the notation $A_i=S(i)\setminus i$, $B_i=\Delta (S\setminus S(i))$ and $C_i=[n]\setminus \{i\}$.

\begin{claim} Suppose $\b>_{\text{lex}}\a-1$. There is an ordering $x_1<x_2<\cdots <x_{n-1}$ of the elements of $C_i$ such that $d_{A_i}(x_1)\le \cdots \le d_{A_i}(x_{n-1})$ and $d_{B_i}(x_1)\le \cdots \le d_{B_i}(x_{n-1})$.
\end{claim}
\begin{proof} As just discussed (using \eqref{eq:numerical_colex_uniq} and the induction hypothesis) if $\b>_{\text{lex}}\a-1$, both $A_i$ and $B_i$ are initial segments in the colex order of $[n]\setminus \{i\}$. We consider three possible cases in \eqref{eq:numerical_colex_uniq}:
\begin{description}
\item[Case 1.] The splitting is as in \eqref{eq:numerical_colex_uniq}, first equation, and $i=0$ ($i$ in \eqref{eq:numerical_colex_uniq}). Then $|S\setminus S(i)|={a_0\choose k}$, $|\Delta (S\setminus S(i))|={a_0\choose k-1}$ and $\Delta (S\setminus S(i))$ is the initial segment isomorphic to ${[a_0]\choose k}$, in which all elements have the same degree. Therefore all orderings of elements of its support are eligible for an initial segment in the colex order, in particular the one ordering for which $S(i)\setminus i$ is an initial segment in the colex order.
\item[Case 2.] The splitting is as the second equation in \eqref{eq:numerical_colex_uniq}. Since $\b>_{\text{lex}}\a-1$, then $i<t+1$ ($i$ in \eqref{eq:numerical_colex_uniq}). In this case $|\Delta (S\setminus S(i))|=\sum_{i=0}^j{a_i-1\choose k-i-1}+\sum_{i=j+1}^t{a_i\choose k-i-1}$ and $|S(i)\setminus i|=\sum_{i=0}^j{a_i-1\choose k-i-1}$ for $j<t+1$. Since both are initial segments, there is one element with maximum degree ${a_0-1\choose k-2}$ in both sets. Since $S(i)\setminus i \subset \Delta(S\setminus S(i))$, this should be the same element. The next largest degree is common to  $(a_0-1)-(a_1-1+1)+1$ elements with degree
$$
\binom{a_0-2}{k-1}+\binom{a_1-1}{k-1-1}+\cdots+\binom{a_{j}-1}{k-j-1}+\binom{a_{j+1}}{k-(j+1)-1}+\cdots+\binom{a_t}{k-t-1}.
$$
Again the condition $S(i)\setminus i \subset \Delta(S\setminus S(i))$ implies that both sets of $(a_0-1)-(a_1-1+1)+1$ elements are the same. We proceed in a similar way up to the $(j-1)$--th largest degree. For the remaining points in the support, they have the same degree in $A_i$ and at least this degree in $B_i$, so every ordering of these elements in $B_i$ is suitable for the ones in $A_i$.
\item[Case 3.]  If the splitting is as the first equation in \eqref{eq:numerical_colex_uniq} with $i>0$. A similar argument as in the Case 2 yet exchanging the roles of $A_i$ and $B_i$ applies.
\end{description}
\end{proof}
This finishes the proof of the statement.
\end{proof}

The following example show that, if $\ell (\a)=k$ and $m\neq {n'\choose k}-1$, then there are extremal families different from the initial segment of the colex order. Let
$$
m=|S|\stackrel{b}{=} {a_0\choose k}+{a_1\choose k-1}+\cdots +{a_{k-1}\choose 1}, a_{k-1}\ge 1.
$$
The initial segment of length $m$ in the colex order is
$$
I_{n,k}(m)={[a_0]\choose k}\cup \left(\{a_0+1\}\vee {[a_1]\choose k-1}\right)\cup\cdots \cup \left(\{a_0+1,a_1+1,\cdots ,a_{k-2}+1\}\vee {a_{k-1}\choose 1}\right).
$$
Let $r$ be the smallest index with the property that 
$a_r-(k-r)=a_{k-1}-1$, that is, the binomial coefficient ${a_r\choose k-r}$ is in the same diagonal as ${a_{k-1}\choose 1}$. Assume $r>0$ (for otherwise $m=\binom{n'}{k}-1$). Take the set  $X=\{a_0+1,a_1+1,\ldots ,a_{k-2}+1,a_{k-1}\}\in I_{n,k}(m)$ and replace $a_r+1$ by $a_{k-1}+1$. Let $X'$ be the resulting set. Then 
$$
S=(I_{n,k}(m)\setminus \{X\})\cup \{X'\},
$$
has cardinality $m$, it has the same shadow as $I_{n,k}(m)$ and it is not isomorphic to the initial segment of the colex order. If $r=0$ then $m={a_0+1\choose k}-1$ and the support of $S$ is $[a_0]$. In this case, by the last part of   Theorem~\ref{thm:kk+fg}, the only examples of extremal sets are of the form ${[a_0+1]\choose k}\setminus \{x\}$ for some $x$, and all these are isomorphic to the initial segment of the colex order. This completes the proof of Theorem~\ref{thm:ucolex}.

\section{Final remarks}\label{sec:concl}

In this paper we have focussed in the arithmetic nature of the Kruskal--Katona theorem. The key inequality in Lemma \ref{lem:abc} has interest in its own. Even if binomial decompositions appear mostly in connection to the Kruskal--Katona theorem, they are also related to the study of  simplicial complexes  and combinatorial geometry (the initial motivation in the Kruskal paper), and  in isoperimetric problems in the $n$--cube, where the characterization of extremal sets is still an open problem (see e.g. Bezrukov \cite[section 4]{bezrukov1994isoperimetric}). We think that the role of the arithmetic inequality \eqref{eq:abc0} in the shadow optimization problem is remarkable and gives perhaps the more natural way to prove the Kruskal--Katona theorem. In particular, it easily leads to  the characterization of the extremal families given in Theorem \ref{thm:charac}. We note that the key inequality in Lemma \ref{lem:abck} for $k',k''\ge k$ can be obtained from the Kruskal--Katona theorem, but we have not found a way to derive the significant case $k'=k, k''=k-1$ from it. 

In most relevant applications the simpler form by Lov\'asz of the Kruskal--Katona theorem is usually enough. Lov\'asz formulation states that, if $x$ is the real number defined by $m={x\choose k}$, then every family $S$ of $k$--sets with cardinality $m=|S|$ has lower shadow of cardinality
$$
|\Delta (S)|\ge {x\choose k-1},
$$
For the above inequality, the only cases of equality are the families ${[n]\choose k}$. It may be of interest to investigate the analogous version of Lemma \ref{lem:abck} for this case, which we conjecture to hold.

\begin{conjecture} Let $x,y,z\in \R$ such that ${x\choose k}={y\choose k}+{z\choose k-1}$ with $y\ge x-1$. Then,
$$
{x\choose k-1}\le {y\choose k-1}+{z\choose k-2}.
$$
\end{conjecture}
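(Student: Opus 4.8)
The plan is to fix the last term, i.e.\ the value $\delta:=\binom{z}{k-1}$, regard $x$ as a function of $y$ along the constraint surface, and show that the resulting ``defect'' $\Phi(y):=\binom{x(y)}{k-1}-\binom{y}{k-1}$ is a monotone function of $y$ whose value at one endpoint is exactly $\binom{z}{k-2}$. Throughout I write $p_j(t)=t(t-1)\cdots(t-j+1)/j!$ for the polynomial extending $\binom{\cdot}{j}$, and I work in the natural range $x,y,z\ge k-1$, on which $p_{k-1}$ and $p_k$ are strictly increasing; this is the only range relevant to Kruskal--Katona, and, as with the non-negativity hypotheses in Lemma~\ref{lem:abc}, the remaining degenerate real configurations would have to be inspected separately.

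The first step is to replace the hypothesis $y\ge x-1$ by the cleaner condition $y\ge z$. Using the polynomial identity $p_k(t+1)=p_k(t)+p_{k-1}(t)$ together with monotonicity of $p_{k-1}$ and $p_k$ on $[k-1,\infty)$, one checks that on the surface $p_k(x)=p_k(y)+p_{k-1}(z)$ one has $x\ge y+1\iff y\le z$ and $x\le y+1\iff y\ge z$; thus $y\ge x-1$ is equivalent to $y\ge z$, with $y=x-1$ precisely when $y=z$, in which case $x=z+1$. Hence $y\mapsto x(y)$ is a well-defined increasing function on $[z,\infty)$ with $x(y)\ge z+1\ge k$, and Pascal's identity gives the boundary value
\[
\Phi(z)=p_{k-1}(x(z))-p_{k-1}(z)=p_{k-1}(z+1)-p_{k-1}(z)=p_{k-2}(z)=\binom{z}{k-2}.
\]
It therefore suffices to prove $\Phi(y)\le\Phi(z)$ for all $y\ge z$.

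Differentiating the constraint gives $x'(y)=p_k'(y)/p_k'(x)$ (legitimate since $p_k'>0$ on $[k-1,\infty)$), so
\[
\Phi'(y)=p_k'(y)\bigl(\psi_k(x(y))-\psi_k(y)\bigr),\qquad \psi_k(t):=\frac{p_{k-1}'(t)}{p_k'(t)}.
\]
Since $p_k'(y)>0$ and $x(y)\ge y$, the inequality $\Phi'\le 0$ follows once $\psi_k$ is shown to be non-increasing. For that, start from $k\,p_k(t)=(t-k+1)\,p_{k-1}(t)$, differentiate to get $k\,p_k'(t)=p_{k-1}(t)+(t-k+1)\,p_{k-1}'(t)$, and divide by $p_{k-1}'(t)$ (positive for $t>k-2$):
\[
\psi_k(t)=\frac{k}{\,L(t)^{-1}+(t-k+1)\,},\qquad L(t):=\frac{p_{k-1}'(t)}{p_{k-1}(t)}=\sum_{i=0}^{k-2}\frac{1}{t-i}.
\]
For $t>k-2$ the sum $L(t)$ is positive and strictly decreasing, so $L(t)^{-1}$ is increasing and $t-k+1$ is increasing; hence the denominator is positive and increasing and $\psi_k$ is decreasing on $(k-2,\infty)\supseteq[k-1,\infty)$. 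Consequently $\Phi'\le 0$ on $[z,\infty)$, so $\Phi(y)\le\Phi(z)=\binom{z}{k-2}$, which rearranges to $\binom{x}{k-1}\le\binom{y}{k-1}+\binom{z}{k-2}$, as desired.

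The genuinely delicate step is not the monotonicity of $\psi_k$ (a one-line harmonic-sum estimate) but pinning down the range hypotheses: checking that $y\mapsto x(y)$ is single-valued and smooth, and deciding which real triples $(x,y,z)$ lying outside $[k-1,\infty)^3$ are actually covered --- e.g.\ $z\in(k-2,k-1)$ forces $y$ into the region where $p_k$ need no longer be monotone, and there one may need a separate sign analysis (the exact analogue of the non-negativity and ``boundary of the cone'' subtleties already present around Lemma~\ref{lem:abc}). A secondary, easier task is the equality discussion: $\Phi$ is flat only where $\psi_k(x(y))=\psi_k(y)$, i.e.\ where $x(y)=y$, which forces $\delta=0$ or $z$ to be a root of $p_{k-1}$; so equality in the conjecture should correspond exactly to the configurations in which the real relation mirrors a single application of Pascal's identity (the continuous analogue of the $\mathbf{b}=_{\text{lex}}\mathbf{a}-1$ equality case).
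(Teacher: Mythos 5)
The paper never proves this statement: it appears only in the final remarks, where the authors explicitly say they \emph{conjecture} it to hold, so there is no proof of record to compare yours against. Your analytic argument is therefore a genuinely new route, and on the natural domain $x,y,z\ge k-1$ (which is what actually arises from the Lov\'asz form, where $x\ge k$, $y,z\ge k-1$) it checks out: the reduction of $y\ge x-1$ to $y\ge z$ via the polynomial identity $p_k(t+1)=p_k(t)+p_{k-1}(t)$, the endpoint value $\Phi(z)=p_{k-2}(z)$, the formula $\Phi'(y)=p_k'(y)\bigl(\psi_k(x(y))-\psi_k(y)\bigr)$, and the identity $k\,p_k(t)=(t-k+1)\,p_{k-1}(t)$ yielding $\psi_k(t)=k/\bigl(L(t)^{-1}+t-k+1\bigr)$ with $L(t)=\sum_{i=0}^{k-2}(t-i)^{-1}$ are all correct, and since $x(y)\ge y\ge z\ge k-1$ you only ever evaluate $\psi_k$ and $p_k'$ on $[k-1,\infty)$, where everything you need holds. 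Two caveats. First, your parenthetical claim that the denominator is positive and $\psi_k$ decreasing on all of $(k-2,\infty)$ is false: as $t\downarrow k-2$ one has $L(t)^{-1}\to 0^{+}$ and $t-k+1\to-1$, so the denominator tends to $-1$ and $\psi_k$ has a pole inside $(k-2,k-1)$; this is only a slip of wording, since the argument uses nothing outside $[k-1,\infty)$. Second, the restriction to $x,y,z\ge k-1$ is not an optional tidy-up to be ``inspected separately'': the statement as literally written for all reals is false --- for $k=3$ take $(x,y,z)=(3,2,-1)$, where $\binom{3}{3}=\binom{2}{3}+\binom{-1}{2}$ and $y\ge x-1$, yet $\binom{3}{2}=3>\binom{2}{2}+\binom{-1}{1}=0$ --- so a complete write-up must build the range hypotheses into the statement (say $x\ge k$ and $y,z\ge k-1$) rather than defer the remaining real configurations. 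With that done, your monotone-defect argument appears to be a complete proof of the conjectured inequality, together with the expected equality case $y=z$, $x=z+1$, i.e.\ the continuous analogue of $\b=_{\text{lex}}\a-1$.
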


The analogous statement replacing $k$ and $k-1$ in the right--hand side of the first equality by $k',k''\ge k$ indeed holds and it is simple to verify.

 Further insight on the structural properties of the extremal families can be obtained by an additional approach which we develop in a separate paper (see \cite{Serra2021752} for an extended abstract and  also  \cite{Serra20191043}). For an extremal family  $S$ (actually any family of $k$--sets), $\Delta^{k-1}(S)$ is trivially an initial segment of the colex order.  One possible   measure of the distance between extremal sets and initial segments of the colex order is the minimum $r$ such that $\Delta^{r}(S)$ is an initial segment of the colex order ($r=0$ if $S$ is an initial segment itself, and extremal families with $r=1$ can be seen as small perturbations of initial segments). We show in \cite{Serra2021752} that we actually have  $r=O(loglog (n))$, which shows that extremal sets are not far from initial segments in that sense. Moreover, large values of $r$ involve a fast decreasing (doubly exponential)  of the sequence of   coefficients in the binomial decomposition of $m=|S|$,  thus making  such extremal sets rather rare. We also provide an explicit algorithm polynomial in $n$ to construct extremal families which are at a given distance to  initial segments of the colex order in the above sense. The algorithm provides examples of extremal sets which are structurally further away from initial segments than the simple examples given at the end of Section \ref{sec:ucolex}.

Our original motivation of the current paper is the isoperimetric problem in Johnson graphs (see e.g. \cite{bollobas2004isoperimetric, keevash2008shadows, Serra20191043}). The Johnson graph $J(n,k)$ has vertex set ${[n]\choose k}$ and the set of neighbours of a set $S$ is $\Delta(\nabla(S))$. The problem asks to minimize the boundary of sets $S$ of vertices in  $J(n,k)$ for each given  cardinality of $S$. A family of solutions to the problem is not known and, unlike closely related problems of this kind, a family of nested solutions is known not to exist. Extremal families for the Kruskal--Katona theorem provide asymptotic examples of extremal sets for the isoperimetric problem in the Johnson graphs. It turns out that there are extremal families different from initial segments in the colex order that beat the initial segments in such asymptotic constructions. Therefore, a better understanding of the extremal families for the Kruskal--Katona theorem is instrumental in this isoperimetric  problem as well.


\bibliographystyle{abbrv}
\bibliography{bib_john.bib}


\end{document}